\newtheorem{tw}{Theorem}[subsection]
\newtheorem{lm}[tw]{Lemma}
\newtheorem{wn}[tw]{Corollary}
\newtheorem{stw}[tw]{Proposition}
\newenvironment{dow}{\it Proof.\rm}{\hfill $\Box$}
\theoremstyle{definition}
\newtheorem*{df}{Definition}
\newtheorem{uw}[tw]{Remark}
\newtheorem{prz}[tw]{Example}
\newcommand{\BN}{{\mathbb N}}
\newcommand{\BR}{{\mathbb R}}
\newcommand{\BX}{{\mathbb X}}
\newcommand{\FF}{{\mathcal{F}}}
\newcommand{\GG}{{\mathcal{G}}}
\newcommand{\BB}{{\mathcal{B}}}
\newcommand{\LL}{{\mathcal{L}}}
\newcommand{\DD}{{\mathbf{D}}}
\newcommand{\PP}{{\mathcal{P}}}
\newcommand{\EE}{{\mathcal{E}}}
\newcommand{\1}{{\mathbf{1}_{[0,\tau_{k}]}}}
\newcommand{\nsubsection}{\setcounter{equation}{0}\subsection}
\renewcommand{\thefootnote}{{}}
\begin{document}
\title {Right Markov processes and systems of semilinear equations
with measure data}
\author {Tomasz Klimsiak }
\date{}
\maketitle
\begin{abstract}
In the paper we prove the existence of probabilistic solutions to
systems of the form $-Au=F(x,u)+\mu$, where $F$ satisfies a
generalized sign condition and $\mu$ is a smooth measure. As for
$A$ we assume that it is a generator of a Markov semigroup
determined by a right Markov process whose resolvent is order
compact on $L^1$. This class includes local and nonlocal operators
corresponding to Dirichlet forms  as well as some operators which
are not in the variational form. To study the problem we introduce
new concept of compactness property relating the underlying Markov
process to almost everywhere convergence. We prove some useful
properties of the compactness property and  provide its
characterization  in terms of Meyer's property (L) of Markov
processes and in terms of order compactness of the associated
resolvent.
\end{abstract}
{\bf Mathematics Subject Classification}: 35A01, 35D99.

\renewcommand{\thefootnote}{}

\footnote{T. Klimsiak} \footnote{Faculty of Mathematics and
Computer Science,  Nicolaus Copernicus University, Chopina 12/18,
87--100 Toru\'n, Poland}

\footnote{e-mail: tomas@mat.uni.torun.pl,\quad tel.  +48 56 611 3410 \quad
fax: +48 56 611 2987}

\renewcommand{\thefootnote}{\arabic{footnote}}
\setcounter{footnote}{0}

\nsubsection{Introduction}

Let $E$ be  a Radon metrizable topological space, $F:E\times
\BR^N\rightarrow\BR^N$, $N\ge1$, be a measurable function and let
$\mu=(\mu_1,\dots,\mu_N)$ be a smooth measure on $E^N$. In the
present paper we investigate the problem of existence of solutions
of the system
\begin{equation}
\label{eqi.1} -Au=F(x,u)+\mu.
\end{equation}
Here $A$ is the linear operator associated with a Markov semigroup
$\{T_t,t\ge 0\}$ on $L^1(E;m)$. Our only assumption on $\{T_t\}$
is that it is representable by some right Markov process
$\mathbb{X}=(\{X_t,t\ge 0\},\{P_x,x\in E\})$ on $E$, i.e. for
every $t\ge 0$ and $f\in L^1(E;m)$,
\begin{equation}
\label{eq1.2} (T_tf)(x)=E_xf(X_t)\equiv p_tf(x)\quad \mbox{for
$m$-a.e. } x\in E,
\end{equation}
where $E_x$ denotes the expectation with respect to the measure
$P_x$. The class of operators associated with such semigroups is
fairly wide. It includes important local and nonlocal operators
corresponding to quasi-regular Dirichlet forms  (see
\cite{Fukushima,MR,Oshima}) as well as interesting operators which
are not in the variational form,  like some classes of
Ornstein-Uhlenbeck processes (see Example 5.7).

As for $F=(f_1,\dots,f_N)$ we assume that it is continuous with
respect to $u$ and satisfies the following  sign condition:
\begin{equation}
\label{eq1.3} \langle F(x,y),y\rangle \le G(x)|y|,\quad x\in
E,\,y\in\mathbb{R}^N
\end{equation}
for some appropriately integrable positive function $G$ (see
hypotheses (H1)--(H4) in Section \ref{sec3}).

The first problem we encounter when dealing with systems of the
form (\ref{eqi.1}) is to give suitable definition of a solution.
The problem occurs even in the case of one linear equation with
local operator of the form $A=\sum_{i,j=1}^{d}
\frac{\partial}{\partial x_j}(a_{ij}\frac{\partial}{\partial
x_i})$, whose study goes back to the papers of Serrin
\cite{Serrin} and Stampacchia \cite{Stampacchia}. Serrin
\cite{Serrin} constructed an example of (discontinuous)
coefficients $a_{ij}$ and nontrivial function $u$ having the
property that $u\in W^{1,q}_0(D)$ for every $q<d/(d-1)$ and $u$ is
the distributional solution of (\ref{eqi.1}) with data $\mu=0$,
$F=0$. Since it was known that in general one can not expect  that
a solution to (\ref{eqi.1}) belongs to the space $W^{1,q}_0(D)$
with $q\ge d/(d-1)$, the problem of the  definition of a solution
to (\ref{eqi.1}) ensuring uniqueness arose. Stampacchia
\cite{Stampacchia} solved this problem by introducing the
so-called definition by duality. Since his work the theory of
scalar equations with measure data and local operators (linear and
nonlinear of of Leray-Lions type) have attracted considerable
attention (see \cite{Betal.,BGO,BDGO,DMO,DPP} for results for
equations with smooth measures $\mu$; a nice  account of the
theory for equations with general measures has been given in
\cite{BBr}).

The case of nonlocal operators is much more involved. To our
knowledge there were only few attempts to investigate scalar
linear equation (\ref{eqi.1}) with operator $A=\Delta^{\alpha}$
with $\alpha\in (0,1]$ by analytical methods (see \cite{AAB,KPU}).
To encompass broader class of operators and semilinear equations
in \cite{KR:JFA} (see also \cite{KR:MN}) a probabilistic
definition of a solution of scalar problem (\ref{eqi.1}) is
proposed. The basic idea in \cite{KR:JFA} is to define a solution
via  a nonlinear Feynman-Kac formula. Namely, a solution of
(\ref{eqi.1}) is a measurable function $u:E\rightarrow \mathbb{R}$
such that
\begin{equation}
\label{eqi.2} u(x)=E_x\int_0^{\infty}F(X_t,u(X_t))\,dt
+E_x\int_0^\infty dA^{\mu}_t
\end{equation}
for $m$-a.e. $x\in E$, where $A^{\mu}$ is a continuous additive
functional of the process $\BX$ corresponding to the measure $\mu$
in the Revuz sense (see \cite{FG,Fukushima,MR,Revuz}). In
\cite{KR:JFA} it is proved that in case $N=1$ if $F$ is
nonincreasing with respect to $u$ then under mild integrability
assumptions on the data there exists a unique solution to
(\ref{eqi.1}). In fact,  if $A$ is a uniformly divergence form
operator then the probabilistic solution of (\ref{eqi.1})
coincides with Stampacchia's solution by duality.

When studying systems (\ref{eqi.1}) with $F$ satisfying merely
sign condition (\ref{eq1.3}) we encounter new difficulties, which
roughly speaking pertain to weaker regularity of solution of
(\ref{eqi.1})  then in the scalar case and to ``compactness
properties". In \cite{Kl:AMPA} we have studied systems of the form
(\ref{eqi.1}) on bounded domain $D\subset \mathbb{R}^d$ with
$A=\Delta$ subject to homogeneous Dirichlet boundary condition. In
\cite{Kl:AMPA} it is observed that in general, if $F$ only
satisfies the sign condition, one cannot expect that
$F(\cdot,u)\in L^1(D;m)$. Moreover,  it may happen that the first
integral on the right-hand side of (\ref{eqi.2}) is infinite. This
together with the comments given before show that for systems,
even in the case of a uniformly elliptic divergence form operator,
neither the distributional definition nor the probabilistic via
the Feynman-Kac formula (\ref{eqi.2}) are applicable. For these
reasons in \cite{Kl:AMPA} more general than in \cite{KR:JFA,KR:MN}
probabilistic definition of a solution of (\ref{eqi.1}) is
adopted. It uses the representation of $u$ in terms of some
backward stochastic differential equation (BSDE) associated with
$A,F,\mu$ (in case $F(\cdot,u)$ is integrable the representation
reduces to (\ref{eqi.2})). This approach via BSDEs only  requires
quasi-integrability of $F(\cdot,u)$. It turns out that this mild
demand is always satisfied for solutions of (\ref{eqi.1}).
Therefore in the present paper we use some suitable generalization
of the definition from \cite{Kl:AMPA} (see Section \ref{sec3}).

As for ``compactness properties", let us note that in
\cite{KR:JFA} it is shown that if $N=1$ and $F$ is nonincreasing
then for $A$ associated with a Dirichlet form the function
$F(\cdot,u)$ is integrable but in general, $u$ is not integrable
(even locally). Since in case $N\ge2$ also the function
$F(\cdot,u)$ need not be integrable, it is fairly unclear what
type of function space possessing Banach structure to use to get
the existence result for (\ref{eqi.1}). In \cite{Kl:AMPA} we have
used the specific structure of the operator $A=\Delta$ to prove
that a solution of (\ref{eqi.1}) equals locally (i.e. on some
finely open sets) to some function from $H^1_0(D)$, which allowed
us to apply the Rellich-Kondrachov theorem on finely open sets
(see also \cite{Fuglede1,Fuglede2} for the theory of Laplacians on
finely open domains). In general, this approach fails. To overcome
the difficulty, in the present paper we introduce a notion of
compactness property relating the process $\BX$ to given solid
$\PP$ and positive subadditive set function $m$ on $E$ (not
necessarily measure). The compactness property is intended to
study $m$-a.e. convergence of sequences of functions defined on
$E$, pointwise convergence (when $m$ is a counting measure) and
quasi-everywhere convergence (when $m$ is the capacity determined by
$A$). It appears that such analysis of pointwise behaviour of
sequences of functions, in particular sequences of the form
$\{p_tf_n\}$, $\{R_{\alpha}f_n\}$, where $p_tf$ is defined by
(\ref{eq1.2}) and $R_{\alpha}f$ is the probabilistic resolvent
defined by
\begin{equation}
\label{eq1.4} R_\alpha f(x)=E_x\int_0^{\infty}e^{-\alpha
t}f(X_t)\,dt,\quad x\in E,
\end{equation}
is sufficient for the proof of existence of probabilistic
solutions to (\ref{eqi.1}).

Roughly speaking, given a solid $\PP\subset \mathcal{B}^+(E)$ and
a positive subadditive set function $m$ on $E$ we say the triple
$(\BX,\PP,m)$ has the compactness property  if for some $\alpha>0$
the probabilistic resolvent (\ref{eq1.4}) carries the family $\PP$
in a relative compact set in the topology of $m$-a.e. convergence
(see Section 2.1). If $m$ is the counting measure  then we will
omit $m$ in the notation and simply say that $(\BX,\PP)$ has the
compactness property.

In applications the family $\PP=\{u\in \mathcal{B}^+(E); u\le
1\}\equiv\BB_1$ plays pivotal role. Mokobodzki (see \cite[Section
XII, Theorem 89]{DellacherieMeyer}) has observed  that if $\BX$
satisfies hypothesis (L) of Meyer then $(\BX,\BB_1)$ has the
compactness property. Actually, he has observed that $R_\alpha
:\mathcal{B}_b(E)\rightarrow \mathcal{B}_b(E)$ is compact if we
equip $\mathcal{B}_b(E)$ with the topology of uniform convergence.
From \cite[Proposition 5.2, page 32]{Revuz} it follows that in
fact compactness of $R_\alpha:\mathcal{B}_b(E)\rightarrow
\mathcal{B}_b(E)$ with $\mathcal{B}_b(E)$ equipped with the
topology of uniform convergence is equivalent to hypothesis (L) of
Meyer. In Section \ref{sec2} using results of
\cite{WalshWinkler,WeisWerner} we prove that
\[
(\BX,\BB_1)\mbox{ has the compactness property iff $\BX$ satisfies
Meyer's hypothesis (L)}.
\]
In Section \ref{sec4} we show that if $m$ is an excessive measure
then
\begin{align}
\label{eq1.5}
&(\BX,\BB_1,m)\mbox{ has the compactness property iff}\nonumber\\
&\qquad\qquad\quad R_\alpha: L^1(E;m)\rightarrow L^1(E;m)\mbox{ is
order compact for some }\alpha>0.
\end{align}
Here by order compactness we mean that for every positive $v\in
L^1(E;m)$, $R_\alpha$ carries order intervals $[0,v]=\{u\in
L^{1}(E;m): 0\le u\le v\}$ in relatively compact subsets of
$L^1(E;m)$. We also investigate some stability properties of the
compactness property with respect to  transformation of the
underlying process. The most important result in this direction is
Proposition \ref{stw.2}. It says that for every
$B\in\mathcal{B}(E)$, if $(\BX,\PP,m)$ has the compactness
property then $(\BX^B,\PP(B),m)$ has the compactness property,
where $\BX^B$ denotes the part of $\BX$ on $B$ and
$\PP(B)=\{u\in\PP; u(x)= 0,x\in E\setminus B\}$. We have already
mentioned that it is reasonable to expect that $F(\cdot,u)$ and
$u$ are quasi-integrable which roughly speaking means that they
are integrable on subsets of $E$ whose complements have small
capacity naturally generated by the operator $A$. The significance
of Proposition \ref{stw.2} is that it allows to reduce the proof
of existence of solutions of (\ref{eqi.1}) to the analysis of the
system  (\ref{eqi.1}) on such  sets. Let us also note that in some
sense Proposition \ref{stw.2} resembles results on compactness of
positive operators subordinated to compact operator (see \cite{AB}
and Corollary \ref{wn2.222}).

The second problem that we address in Section \ref{sec2} is to
find conditions on a sequence $\{u_n\}$ of functions on $E$, which
together with the compactness property imply that $\{u_n\}$ is
relatively compact in the topology of $m$-a.e. convergence.  Our
main result is Theorem \ref{stw2.1}, which says that if
($\BX,\PP,m)$ has the compactness property and $\{u_n\}\subset\PP$
satisfies the condition
\begin{equation}
\label{eq1.7} \lim_{t\rightarrow 0^+}\sup_{n\ge 1}
|p_tu_n(x)-u_n(x)|=0\quad \mbox{for $m$-a.e. }x\in E
\end{equation}
then $\{u_n\}$ has a subsequence convergent $m$-a.e. Condition
(\ref{eq1.7}) is satisfied for instance if for $m$-a.e. $x\in E$
the sequence of processes $\{u_n(X)\}$ is tight in the Skorokhod
topology $J_1$ under the measure $P_x$. It is worth noting here
that in the paper the notion of compactness of a triple
$(\mathbb{X},\PP,m)$ is defined for general normal processes (i.e.
markovianity of the process $\mathbb{X}$ is not required) and that
Theorem \ref{stw2.1} is proved for such wide class of  processes.

In Section 2.3 we  show that if $\BX$ is associated with a
transient symmetric regular Dirichlet form $(\EE,D[\EE])$ on
$L^2(E;m)$, $(\BX,\PP,m)$ has the compactness property and
$\{u_n\}\subset \FF_e\cap \PP$, where $\FF_e$ is an extension of
the domain $D[\EE]$ such that the pair $(\EE,\FF_e)$ is a Hilbert
space, then the condition
\[
\sup_{n\ge 1}\EE(u_n,u_n)<\infty
\]
implies that $\{u_n\}$ has a subsequence convergent $m$-a.e.
Moreover, we prove that
\begin{align*}
&(\mathbb{X},\PP,m)\mbox{ has the compactness property iff }\\
&\qquad\qquad\qquad(\mathbb{X},\PP,\mbox{cap})\mbox{ has the
compactness property},
\end{align*}
where $\mbox{cap}$ is the capacity on $E$ determined by the form
$(\EE,D[\EE])$.

In Sections \ref{sec3} and \ref{sec4} we  define a probabilistic
solution of (\ref{eqi.1}) and give an existence result for system
(\ref{eqi.1}). The basic space in which solutions are looked for
is the space $\DD$ of measurable functions $u$ on $E$ such that
the family $ \{u(X_\tau),\tau\mbox{ is a  stopping time}\}$ is
uniformly integrable under $P_x$ for q.e. $x\in E $. We show that
$D[\EE]\subset\DD$ if $\BX$ is associated with a semi-Dirichlet
form. We call a finely continuous function $u\in\DD$ such that
$F(\cdot,u)$ is quasi-integrable a solution of (\ref{eqi.1}) if
there exists a local martingale additive functional $M$ of $\BX$
such that for $m$-a.e. $x\in E$ and every $T>0$,
\begin{align*}
u(X_t)&=u(X_{T\wedge\zeta})+\int_t^{T\wedge\zeta}
F(X_r,u(X_r))\,dr +\int_t^{T\wedge\zeta}
dA^{\mu}_r\\
&\quad+\int_t^{T\wedge\zeta} dM_r, \quad t\in
[0,T\wedge\zeta],\quad P_x\mbox{-a.s.},
\end{align*}
where  $\zeta$ is the life-time of $\BX$ and $A^{\mu}$ is the
positive co-natural additive functional associated with measure
$\mu$.

We first study probabilistic solutions to (\ref{eqi.1}) in case
$\BX$ is associated with a semi-Dirichlet form and $(\BX,\BB_1,m)$
has the compactness property. In Section \ref{sec3} we show that
if  $\mu$ is smooth and satisfies some integrabilty condition, $F$
satisfies the sign condition (\ref{eq1.3}), then there exists a
solution of (\ref{eqi.1}). We also show that if $F$ is monotone,
i.e.
\[
\langle F(x,y)-F(x,z),y-z\rangle \le 0,\quad x\in
E,\,y,z\in\mathbb{R}^N,
\]
then the probabilistic solution to (\ref{eqi.1}) is unique.

The case of general right Markov processes is considered in
Section \ref{sec4}. We show that if $\BX$ satisfies Meyer's
condition (L) then under the same hypotheses as in Section
\ref{sec3} there exists a solution to (\ref{eqi.1}). Using
(\ref{eq1.5}) one can formulate the existence result in purely
analytic terms, without relating to the  concept of the
compactness property. Namely, if the resolvent of the operator $A$
is order compact on $L^1(E;m)$, $F$ satisfies the sign condition
and the data are appropriately integrable then there exists a
solution of (\ref{eqi.1}). As a matter of fact we assume some
additional regularity condition on the semigroup $\{T_t,\, t\ge 0\}$  but we think
that it is technical and can be omitted.

In Section \ref{sec5} we give some examples of operators and
processes to which our results apply. Among others we give a
simple example of Ornstein-Uhlenbeck semigroup, i.e. semigroup
generated by differential operator of the form
\[
L\phi(x)=\frac12 \mbox{tr}(QD^2\phi(x))+\langle Ax, D\phi
(x)\rangle,
\]
which is not of variational form (or, equivalently, is not
analytic). The Ornstein-Uhlenbeck process with generator $L$ is
not associated with a Dirichlet form but satisfies Meyer's
hypothesis (L). This shows that the class of processes considered
in Section \ref{sec4} includes important processes that do not
belong to the class considered in Section \ref{sec3}.

\nsubsection{Compactness property}
 \label{sec2}

\subsubsection{Normal processes}

Let $E$ be a Radon metrizable topological space (see \cite{BB})
and $\BB(E)$ be the set of all numerical Borel measurable
functions on $E$. W adjoin an isolated point $\Delta$ to $E$ and
set $E_{\Delta}=E\cup\{\Delta\}$ (in $E_{\Delta}$ we have natural
topology in which $E$ is open). We denote by $\BB_\Delta(E)$ the
set of all numerical Borel measurable functions on $E_\Delta$. Let
$(\Omega, \mathcal{G})$ be a measurable space and
$\{X_t,\,t\in[0,\infty]\}$ be a stochastic process on $E_\Delta$
such that $X_\infty=\Delta$ and if $X_{t_0}=\Delta$ for some
$t_0\in[0,\infty]$ then $X_t=\Delta$ for $t\ge t_0$. We denote by
$\zeta$ the life-time of $X$, i.e.
\[
\zeta=\inf\{t\ge 0;\quad X_t=\Delta\}.
\]
For  $x\in E_\Delta$ let $P_x$ be a probability measure on
$(\Omega,\mathcal{G})$. Let $\{\mathcal{G}_t,\,t\in[0,\infty]\}$
be a filtration in $\mathcal{G}$ and let $\{\GG^0_t,\,t\ge 0\}$ be
a natural filtration generated by $X$. We assume that
\begin{enumerate}
\item[(a)] for every $t\ge 0$, $X_t\in\GG_t/\BB_\Delta(E)$,
\item[(b)] the mapping $E\ni x\mapsto P_x(X_t\in B)$
belongs to $\BB(E)$ for every
$t\ge 0$ and $B\in\BB(E)$,
\item[(c)] for every $x\in E_\Delta$, $P_x(X_0=x)=1$,
\item[(d)] $X$ is measurable relative to $\GG^0$, i.e. the mapping
$[0,\infty)\times\Omega\ni(t,w) \mapsto X_t(w)\in E_\Delta$ is
$\BB([0,\infty))\times\GG^0/\BB_\Delta$  measurable.
\end{enumerate}

Let $\mathbb{X}=(\Omega,\FF,\{X_t,\,t\ge 0\}, \{P_x,\, x\in E\})$.
In the whole paper for a given Borel set $B\subset E$ we denote by
\[
\sigma_B=\inf\{t>0;\, X_t\in B\},\quad D_B=\inf\{t\ge0;\, X_t\in B\},
\quad \tau_B=\inf\{t>0;\, X_t\in E\setminus B\}
\]
the hitting time, debut time and the first exist time of $B$,
respectively. By $\mathbb{X}^B=(\Omega,\FF, \{X^B_t,\, t\ge 0\},
\{P_x,\, x\in E\})$ we denote the part of $\BX$ on $B$, i.e.
\[
X^B_t= \left\{
\begin{array}{ll} X_t(\omega),& 0\le t< D_{E\setminus B}(\omega),\\
\Delta, & t\ge D_{E\setminus B}(\omega).
\end{array}
\right.
\]

Let $\BB^+(E)=\{f\in\BB(E);f(x)\ge 0,x\in E\}$ and let $\BB^r(E)$
denote the set of $u\in\BB(E)$ such that $u(x)\in\BR$ for $x\in
E$. In the whole paper we adopt the convention that $f(\Delta)=0$
for every numerical function $f$ on $E$. For every $t\ge 0$,
$\alpha\ge 0$ and $f\in\BB^+(E)$ we put
\[
p_tf(x)=E_xf(X_t),\quad R_\alpha f(x) =E_x\int_0^\infty e^{-\alpha
t}f(X_t)\,dt,\quad x\in E.
\]
By (a)--(d), $p_t:\BB^+(E)\rightarrow\BB^+(E)$,
$R_\alpha:\BB^+(E)\rightarrow \BB^+(E)$. Let
$\PP\subset\BB^{r,+}(E)$ be some  family having the following
properties
\begin{enumerate}
\item[(P1)] $(f\in\PP, g\in\BB^+(E), g\le f)\implies g\in\PP$,
\item[(P2)] $\{f_n\}\subset\PP\implies\sup_{n}f_n\in\PP$.
\end{enumerate}

Unless otherwise stated, in this section $m$ is a nonnegative
subadditive set function on $E$.

\begin{df}
(a) We say that a triple $(\BX,\PP,m)$ has the compactness
property if for every $\{u_n\}\subset\PP$ there exist a set
$\Lambda\subset(0,+\infty)$ and a subsequence $(n')\subset(n)$
such that $\sup \Lambda=+\infty$ and  for every $\alpha\in
\Lambda$ the sequence $\{R_\alpha u_{n'}\}$ is $m$-a.e. convergent
and its limit is $m$-a.e. finite.\smallskip\\
(b) We say that a pair $(\BX,\PP$) has the compactness property if
the triple ($\BX$, $\PP$, $m$) has compactness property with $m$
being the  counting measure.
\end{df}

In the sequel for given $\PP\subset\BB^{r,+}(E)$ we set
$\PP^*=\PP-\PP$.

\begin{df}
We say that a sequence $\{u_n\}\subset\PP^*$ satisfies
\begin{enumerate}
\item[(a)] condition (M$_0$) if
\[ \lim_{h\rightarrow 0^+}\sup_{n\ge 1}\sup_{t\le
h}|u_n(X_t)-u_n(x)|=0\quad \mbox{in probability }P_x\mbox{ for }
m\mbox{-a.e.}\,x\in E,
\]
\item[(b)] condition (M$_1$) if
\[\lim_{t\rightarrow 0^+}\sup_{n\ge 1}|p_tu_n(x)-u_n(x)|=0
\quad \mbox{for }m\mbox{-a.e.}\,x\in E,
\]
\item[(c)]condition (M$_2$) if $m$ is a measure and for some $p\ge1$
\[
\lim_{t\rightarrow 0^+}\sup_{n\ge 1}\|p_tu_n-u_n\|_{L^p(E;m)}=0.
\]
\end{enumerate}
\end{df}

\begin{uw}
It is clear that if $\{u_n\}$ satisfies some  integrability
conditions and $m$ is a $\sigma$-finite measure then (M$_0$) 
implies (M$_1$) and (M$_1$) implies (M$_2$).
\end{uw}

\begin{tw}
\label{stw2.1} Assume that $(\BX,\PP,m)$ has the compactness
property. If $\{u_n\}\subset\PP^*$ satisfies $\mbox{\rm(M$_1$})$
then there exists a subsequence $(n')\subset(n)$ such that
$\{u_{n'}\}$ is $m$-a.e. convergent and its limit is $m$-a.e.
finite.
\end{tw}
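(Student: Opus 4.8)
The plan is an Arzelà–Ascoli type argument in which $(\mathrm{M}_1)$ supplies ``uniform equicontinuity'' through the resolvent approximation $\alpha R_\alpha u_n\to u_n$ (uniformly in $n$), while the compactness property supplies ``pointwise precompactness''. First I would fix the decomposition $u_n=u_n^{+}-u_n^{-}$: writing $u_n=g-h$ with $g,h\in\PP$ (possible since $u_n\in\PP^*$) one has $u_n^{+}\le g$ and $u_n^{-}\le h$, so $u_n^{+},u_n^{-}\in\PP$ by (P1), and by (P2) the functions $\bar g:=\sup_n u_n^{+}$, $\bar h:=\sup_n u_n^{-}$ lie in $\PP$, hence are everywhere finite (being in $\BB^{r,+}(E)$); note $|u_n|\le\bar g+\bar h$ pointwise. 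Applying the compactness property to the constant sequences $(\bar g)$ and $(\bar h)$ gives some $\alpha_0>0$ with $R_{\alpha_0}\bar g,R_{\alpha_0}\bar h$ finite $m$-a.e.; since $R_\alpha\le R_{\alpha_0}$ for $\alpha\ge\alpha_0$ the same holds for all $\alpha\ge\alpha_0$, and $R_\alpha\bar g,R_\alpha\bar h\downarrow0$ $m$-a.e.\ as $\alpha\uparrow\infty$. In particular $R_\alpha(\bar g+\bar h)(x)<\infty$, so $R_\alpha u_n(x):=R_\alpha u_n^{+}(x)-R_\alpha u_n^{-}(x)$ is well defined, for $m$-a.e.\ $x$ and every $\alpha\ge\alpha_0$; I fix the full‑measure set $E_0$ on which all of the above and $(\mathrm{M}_1)$ hold.

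The heart of the proof is to turn $(\mathrm{M}_1)$ into the uniform estimate
\[
\lim_{\alpha\to\infty}\ \sup_{n\ge1}\bigl|\alpha R_\alpha u_n(x)-u_n(x)\bigr|=0,\qquad x\in E_0 .
\]
Since $|p_tu_n(x)|\le p_t(\bar g+\bar h)(x)$ and $\alpha\int_0^\infty e^{-\alpha t}p_t(\bar g+\bar h)(x)\,dt=\alpha R_\alpha(\bar g+\bar h)(x)<\infty$, the identity $\alpha\int_0^\infty e^{-\alpha t}\,dt=1$ lets me write $\alpha R_\alpha u_n(x)-u_n(x)=\alpha\int_0^\infty e^{-\alpha t}\bigl(p_tu_n(x)-u_n(x)\bigr)\,dt$ and split at $M>0$. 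The $[0,M]$ part is at most $\omega_M(x):=\sup_{0<t\le M}\sup_n|p_tu_n(x)-u_n(x)|$, which tends to $0$ as $M\to0^+$ by $(\mathrm{M}_1)$; the $(M,\infty)$ part is, for $\alpha>\alpha_0$, at most $\alpha e^{-(\alpha-\alpha_0)M}R_{\alpha_0}(\bar g+\bar h)(x)+(\bar g+\bar h)(x)\,e^{-\alpha M}$ (using $\int_M^\infty e^{-\alpha t}p_t(\bar g+\bar h)(x)\,dt\le e^{-(\alpha-\alpha_0)M}R_{\alpha_0}(\bar g+\bar h)(x)$ and $|u_n(x)|\le(\bar g+\bar h)(x)$), which for fixed $M$ tends to $0$ as $\alpha\to\infty$. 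So $\limsup_\alpha\sup_n|\alpha R_\alpha u_n(x)-u_n(x)|\le\omega_M(x)$ for every $M>0$, and letting $M\to0$ finishes the estimate.

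Next I would extract the subsequence. Applying the compactness property to $\{u_n^{+}\}$, then along the resulting subsequence to $\{u_n^{-}\}$, yields a subsequence $(n')$ and cofinal sets $\Lambda^{+},\Lambda^{-}$ with $R_\alpha u_{n'}^{+}$ $m$-a.e.\ convergent for $\alpha\in\Lambda^{+}$ and $R_\alpha u_{n'}^{-}$ $m$-a.e.\ convergent for $\alpha\in\Lambda^{-}$. The main obstacle is that these two parameter sets need not meet cofinally, whereas for the conclusion I need the signed resolvent $R_\alpha u_{n'}=R_\alpha u_{n'}^{+}-R_\alpha u_{n'}^{-}$ controlled along a common cofinal set. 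The resolvent identity helps bridge the gap: for $\alpha\in\Lambda^{+}$ and $\beta\in\Lambda^{-}$ with $\beta>\alpha$,
\[
0\le R_\alpha u_{n'}^{-}-R_\beta u_{n'}^{-}=(\beta-\alpha)R_\alpha R_\beta u_{n'}^{-}\le R_\alpha\bar h-R_\beta\bar h\quad\text{on }E_0,
\]
uniformly in $n'$, so choosing $\alpha,\beta$ large with $\beta/\alpha\to1$ forces $\alpha\,(R_\alpha u_{n'}^{-}-R_\beta u_{n'}^{-})(x)\to0$ uniformly in $n'$, while $\alpha(R_\beta u_{n'}^{-}(x)-R_\beta u_{m'}^{-}(x))\to0$ as $n',m'\to\infty$ for the fixed $\beta\in\Lambda^{-}$. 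This is the step I expect to require the most care; a cleaner alternative I would try first is to prove, as a preliminary lemma (via the resolvent identity and an exhaustion of $\PP$), that the set $\Lambda$ in the compactness property can be taken inside any prescribed cofinal subset of $(0,\infty)$ — whence one simply arranges $\Lambda^{-}\subset\Lambda^{+}$ and takes $\Lambda:=\Lambda^{-}$.

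To conclude, fix $x\in E_0$ away from the relevant $m$-null sets and, for $\alpha$ in the common cofinal set, write
\[
u_{n'}(x)-u_{m'}(x)=\bigl(u_{n'}(x)-\alpha R_\alpha u_{n'}(x)\bigr)+\alpha\bigl(R_\alpha u_{n'}(x)-R_\alpha u_{m'}(x)\bigr)-\bigl(u_{m'}(x)-\alpha R_\alpha u_{m'}(x)\bigr).
\]
Sending $n',m'\to\infty$ kills the middle term (Step 3), so $\limsup_{n',m'}|u_{n'}(x)-u_{m'}(x)|\le2\sup_k|u_k(x)-\alpha R_\alpha u_k(x)|$, and sending $\alpha\to\infty$ along the cofinal set kills the right-hand side (Step 2). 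Hence $(u_{n'}(x))$ is Cauchy, so convergent, with finite limit since for a fixed large $\alpha$, $|u_{n'}(x)|\le|u_{n'}(x)-\alpha R_\alpha u_{n'}(x)|+\alpha|R_\alpha u_{n'}(x)|$ stays bounded in $n'$. As this holds for $m$-a.e.\ $x$, the proof is complete.
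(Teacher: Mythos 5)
Your core argument coincides with the paper's: both proofs convert (M$_1$) into the uniform estimate $\sup_{n}|\alpha R_\alpha u_n(x)-u_n(x)|\rightarrow 0$ as $\alpha\rightarrow\infty$ along the cofinal set (splitting $\alpha\int_0^\infty e^{-\alpha t}(p_tu_n(x)-u_n(x))\,dt$ at a small time, using (M$_1$) on the initial piece and dominating the tail by means of $w=\sup_n|u_n|\in\PP$ and $R_{\alpha_0}w(x)<\infty$), and then deduce that $\{u_{n'}(x)\}$ is Cauchy by a triangle inequality against the $m$-a.e.\ convergent sequence $\{\alpha R_\alpha u_{n'}(x)\}$. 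Your explicit tail bound $\alpha e^{-(\alpha-\alpha_0)M}R_{\alpha_0}w(x)$ is if anything cleaner than the paper's dominated-convergence step.

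The one genuine divergence is your handling of the decomposition $u_n=u_n^{+}-u_n^{-}$. The paper simply applies the compactness property to the signed sequence and takes a single countable cofinal $\Lambda$; you rightly note that the definition is stated only for sequences in $\PP$ and that the sets $\Lambda^{+}$ and $\Lambda^{-}$ obtained for $\{u_n^{+}\}$ and $\{u_n^{-}\}$ need not meet cofinally. However, both of your proposed remedies rest on the resolvent identity $R_\alpha-R_\beta=(\beta-\alpha)R_\alpha R_\beta$, which uses the semigroup property of $\{p_t\}$, whereas Theorem \ref{stw2.1} is stated (and, as the introduction stresses, proved) for general normal processes satisfying only (a)--(d), where this identity is not available. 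Moreover, even in the Markov case your first bridge requires $\alpha\bigl(R_\alpha\bar h-R_\beta\bar h\bigr)(x)\rightarrow 0$ as $\alpha,\beta\rightarrow\infty$ with $\beta/\alpha\rightarrow 1$, which essentially amounts to convergence of $\alpha R_\alpha\bar h(x)$ and is not guaranteed without some regularity of $\bar h$ at $x$. Your second remedy --- a preliminary lemma allowing $\Lambda$ to be chosen inside any prescribed cofinal set --- is the right idea and is exactly what the remark following Proposition \ref{stw2.2} delivers for Markov processes with an excessive measure; but for general normal processes the matching of $\Lambda^{+}$ and $\Lambda^{-}$ remains open in your write-up, a point the paper itself passes over silently.
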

\begin{dow}
Let  $\Lambda\subset(0,+\infty)$ be a countable set such that
$\sup \Lambda=+\infty$ and let $(n')\subset(n)$ be a subsequence
such that for every $\alpha\in \Lambda$, $\{R_\alpha u_{n'}\}$ is
$m$-a.e. convergent and its limit is finite $m$-a.e. Let $A\subset
E$ be a set of those $x\in E$ for which $\lim_{n'}R_\alpha
u_{n'}(x)$ does not exist or exists and is infinite for some
$\alpha\in \Lambda$. It is clear that $m(A)=0$. Let $B$ be the set
of those $x\in E$ for which condition (M$_1$) does not hold. We
put $w=\sup_n|u_n|$. By (P1) and (P2), $u^+_n,u_n^-\in\PP$ and
$\sup_nu_n^+,\sup_nu_n^-\in\PP$. Since
$w\le\sup_nu_n^++\sup_nu_n^-$, we see that $w(x)<\infty$ for $x\in
E$ and that without loss of generality we may assume that
$R_\alpha w(x)<\infty$ for $m$-a.e. $x\in E$ and every $\alpha\in
\Lambda$. Let $C\subset E$ be the set of those $x\in E$ for which
$R_\alpha w(x)=+\infty$ for some $\alpha\in \Lambda$. Let $N=A\cup
B\cup C$. It is clear that $m(N)=0$. Let $x\in E\setminus N$. Then
\begin{equation}
\label{eq2.1}
|\alpha R_\alpha u_n(x)-u_n(x)|
\le\alpha\int_0^\infty e^{-\alpha t}|p_tu_n(x)-u_n(x)|\,dt.
\end{equation}
Let us fix $\varepsilon>0$ and let $\theta_x^\varepsilon>0$,
$n_x^\varepsilon\in\BN$ be such that
\begin{equation}
\label{eq2.2}
\sup_{t\le\theta^\varepsilon_x}|p_tu_n(x)-u_n(x)|<\frac
\varepsilon 2, \quad n\ge n^\varepsilon_x.
\end{equation}
Then
\begin{align*}
&\alpha\int_0^\infty e^{-\alpha t}|p_tu_n(x)-u_n(x)|\,dt\\
&\qquad \le\alpha\int_0^{\theta_x^\varepsilon} e^{-\alpha t}
|p_tu_n(x)-u_n(x)|\,dt+ \alpha\int_{\theta_x^\varepsilon}^\infty
e^{-\alpha t}|p_tu_n(x)-u_n(x)|\,dt\\
&\qquad\le
\alpha\frac{\varepsilon}{2}\int^{\theta_x^\varepsilon}_0
e^{-\alpha t}\,dt +\alpha\int_{\theta_x^\varepsilon}^\infty
e^{-\alpha t}p_tw(x)\,dt +\alpha
w(x)\int_{\theta_x^\varepsilon}^\infty e^{-\alpha t}\,dt\\
&\qquad\le \frac\varepsilon
2(1-e^{-\alpha\theta^\varepsilon_x})+w(x)e^{-\alpha\theta^\varepsilon_x}+
\alpha\int_{\theta_x^\varepsilon}^\infty e^{-\alpha t}p_tw(x)\,dt.
\end{align*}
Let $\alpha_0\in\Lambda$ be such that
$\alpha_0>1/\theta_x^\varepsilon$. Then we have
\[
\alpha e^{-\alpha t}p_tw(x)\le\alpha_0e^{-\alpha_0t}p_tw(x),\quad
t\ge\theta^\varepsilon_x,\quad\alpha\ge\alpha_0,\quad\alpha\in\Lambda.
\]
Therefore  there exists $\alpha_x^\varepsilon\in \Lambda$ such
that
\begin{equation}
\label{eq2.3}
|\alpha R_\alpha u_n(x)-u_n(x)|<\varepsilon,\quad n\ge n^\varepsilon_x,
\quad\alpha\in \Lambda,\quad
\alpha\ge\alpha^\varepsilon_x.
\end{equation}
Write $\beta_x=\limsup_{n'}u_{n'}(x)-\liminf_{n'} u_{n'}(x)$. Then
there exists a subsequence $(n'_k)\subset(n')$ such that
\begin{equation}
\label{eq2.4}
|u_{n'_{k+1}}(x)-u_{n'_k}(x)|>\frac{\beta_x}{2},\quad k\ge1.
\end{equation}
On the other hand, by (\ref{eq2.3}), for $n'_k\ge n_x^\varepsilon$
and $\alpha\in \Lambda$ such that $\alpha\ge\alpha_x^\varepsilon$,
\begin{align}
\label{eq2.5} \nonumber |u_{n'_{k+1}}(x)-u_{n'_k}(x)| &\le
|u_{n'_{k+1}}(x)-\alpha R_\alpha u_{n'_{k+1}}(x)|+|\alpha R_\alpha
u_{n'_{k+1}}(x)-\alpha R_\alpha u_{n'_k}(x)|\nonumber\\& \quad
+|\alpha R_\alpha u_{n'_k}(x)-u_{n'_k}(x)|\nonumber\\
&\le 2\varepsilon+|\alpha R_\alpha u_{n'_{k+1}}(x)- \alpha
R_\alpha u_{n'_k}(x)|.
\end{align}
Put $\varepsilon=\beta_x/9$. By the compactness property  of the
triple $(\mathbb{X},\PP,m)$ there exists $N(\varepsilon,x,
\alpha_x^\varepsilon)\in\BN$ such that
\begin{equation}
\label{eq2.6} |\alpha_x^\varepsilon
R_{\alpha_x^\varepsilon}u_{n'_{k+1}}(x)- \alpha_x^\varepsilon
R_{\alpha_x^\varepsilon}u_{n'_k}(x)|<\frac{\beta_x}{9}
\end{equation}
for $n'_k\ge N(\varepsilon,x,\alpha_x^\varepsilon)$. By
(\ref{eq2.5}) and (\ref{eq2.6}),
\[
|u_{n'_{k+1}}(x)-u_{n'_{k}}(x)|\le\frac{\beta_x}{3}
\]
for
$n'_k\ge\max\{n_x^\varepsilon,N(\varepsilon,x,\alpha_x^\varepsilon)\}$,
which contradicts (\ref{eq2.4}) and  proves the theorem.
\end{dow}

\begin{wn}
If $(\BX,\PP,m)$ has the compactness property and $\{u_n(X)\}$ is
a sequence of c\`adl\`ag processes on some interval $[0,T]$ tight
in the Skorokhod topology  $J_1$ under the measure $P_x$ for
$m$-a.e. $x\in E$ then $\{u_n\}$ has a subsequence convergent
$m$-a.e.
\end{wn}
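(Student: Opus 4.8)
The plan is to deduce this corollary from Theorem \ref{stw2.1} by verifying that tightness of $\{u_n(X)\}$ in $J_1$ under $P_x$ forces condition (M$_1$) at the point $x$, in fact even the stronger condition (M$_0$). So the first step is to fix $x\in E$ for which the sequence of processes $\{u_n(X)\}$ is tight in the Skorokhod topology $J_1$ on $[0,T]$ under $P_x$, and to show that
\[
\lim_{h\to 0^+}\sup_{n\ge 1}\sup_{t\le h}|u_n(X_t)-u_n(x)|=0\quad\text{in probability }P_x.
\]
The natural tool is a modulus-of-continuity criterion for $J_1$-tightness: a tight family of c\`adl\`ag processes is controlled, uniformly in $n$, by the Skorokhod modulus $w'_\delta$, together with a bound on the oscillation near the endpoints. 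Since all the processes start at $X_0=x$, i.e. $u_n(X_0)=u_n(x)$ for every $n$ (using (c) and finiteness of $u_n$), the oscillation at the left endpoint $0$ is exactly the quantity $\sup_{t\le h}|u_n(X_t)-u_n(x)|$ we want to control, and $J_1$-tightness gives precisely that this tends to $0$ uniformly in $n$ in probability as $h\to 0^+$. This establishes (M$_0$) at $x$.

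Next I would pass from (M$_0$) to (M$_1$). Fix $x$ as above and $\varepsilon>0$. For $h>0$ put $\xi_{n,h}=\sup_{t\le h}|u_n(X_t)-u_n(x)|$; then $|p_tu_n(x)-u_n(x)|=|E_x(u_n(X_t)-u_n(x))|\le E_x\xi_{n,h}$ for $t\le h$, and by (M$_0$) we have $\sup_n P_x(\xi_{n,h}>\varepsilon)\to 0$ as $h\to 0^+$. To turn the convergence in probability into convergence of expectations one needs a uniform integrability bound on $\{\xi_{n,h}\}$; this is supplied by $w=\sup_n|u_n|$, which by (P1)--(P2) belongs to $\PP$ — hence is everywhere finite — so that $\xi_{n,h}\le 2w(X_t)\vee\cdots$, more precisely $\xi_{n,h}\le \sup_{t\le h}w(X_t)+w(x)$, a bound independent of $n$. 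One then has to check that $\sup_{t\le h}w(X_t)$ is $P_x$-integrable for small $h$; this follows, for $m$-a.e.\ $x$, from the compactness hypothesis and the finiteness of $R_\alpha w$ established in the proof of Theorem \ref{stw2.1}, or more directly from an estimate of the form $E_x\sup_{t\le h}w(X_t)\le C h^{-1}R_0 w(x)$ type (or a Markov-property/optional-sampling argument). With that uniform bound, splitting $E_x\xi_{n,h}$ over $\{\xi_{n,h}\le\varepsilon\}$ and its complement yields $\sup_n E_x\xi_{n,h}\le \varepsilon + \sup_n E_x(\xi_{n,h};\xi_{n,h}>\varepsilon)\to \varepsilon$, whence (M$_1$) holds at $x$.

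Since the hypothesis is assumed for $m$-a.e.\ $x\in E$, combining the last two steps shows that $\{u_n\}\subset\PP^*$ (note $u_n=u_n^+-u_n^-$ with $u_n^\pm\in\PP$ by (P1)) satisfies (M$_1$) in the sense of the Definition. Theorem \ref{stw2.1} then applies verbatim and produces a subsequence $\{u_{n'}\}$ convergent $m$-a.e.\ with $m$-a.e.\ finite limit, which is exactly the assertion of the corollary. The main obstacle I anticipate is the uniform integrability step: transferring $J_1$-tightness (a statement about convergence in probability of the endpoint oscillations) into the $L^1(P_x)$ control needed to bound $|p_tu_n(x)-u_n(x)|$, which forces one to invoke the integrability of $\sup_{t\le h}w(X_t)$ and hence to use — at least implicitly — the compactness property through the finiteness of $R_\alpha w$, exactly as in the proof of Theorem \ref{stw2.1}. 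Everything else is a routine application of standard Skorokhod-space tightness criteria.
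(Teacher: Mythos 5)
Your first step is sound and matches the paper's opening move: $J_1$-tightness on $[0,T]$ together with $u_n(X_0)=u_n(x)$ $P_x$-a.s.\ controls the oscillation at the left endpoint uniformly in $n$, which gives (M$_0$) at $x$, and tightness of the (deterministic) initial values gives $w(x)=\sup_n|u_n(x)|<\infty$. The genuine gap is in your passage from (M$_0$) to (M$_1$) for the \emph{untruncated} sequence. You correctly isolate the needed ingredient --- a uniform integrability bound on $\xi_{n,h}=\sup_{t\le h}|u_n(X_t)-u_n(x)|$ --- and propose to obtain it from $E_x\sup_{t\le h}w(X_t)<\infty$, deduced either from finiteness of $R_\alpha w$ or from an estimate of the type $E_x\sup_{t\le h}w(X_t)\le Ch^{-1}R_0w(x)$. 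Neither works. The quantity $R_\alpha w(x)=E_x\int_0^\infty e^{-\alpha t}w(X_t)\,dt$ controls only the \emph{time average} of $w$ along the path; for a general positive Borel function $w$ and a general normal process there is no maximal inequality converting this into integrability (or even $P_x$-a.s.\ finiteness) of the running supremum $\sup_{t\le h}w(X_t)$, since $w(X_t)$ is not a supermartingale and tightness supplies no $L^1$-control. This is exactly why the Remark preceding Theorem \ref{stw2.1} asserts that (M$_0$) implies (M$_1$) only under ``some integrability conditions,'' which the hypotheses of the corollary do not provide for the raw $u_n$.

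The paper closes this gap by truncation rather than by uniform integrability: set $u_n^k=T_k(u_n)$. Since $T_k$ is a contraction, tightness of $\{u_n(X)\}$ passes to $\{u_n^k(X)\}$, so $\{u_n^k\}$ satisfies (M$_0$); and because $|u_n^k|\le k$, the implication (M$_0$)$\Rightarrow$(M$_1$) for $\{u_n^k\}$ is immediate by bounded convergence, with no control of suprema required. Theorem \ref{stw2.1} then yields, for each fixed $k$, an $m$-a.e.\ convergent subsequence of $\{u_n^k\}$; a diagonal extraction over $k$, combined with the observation that for $m$-a.e.\ $x$ one has $u_n^k(x)=u_n(x)$ for all $n$ as soon as $k>w(x)$, produces an $m$-a.e.\ convergent subsequence of $\{u_n\}$ itself. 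You should replace your uniform integrability step by this truncation argument; as written, your proof does not close.
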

\begin{proof}
If $\{u_n(X)\}$ is tight under $P_x$ for $m$-a.e. $x\in E$ then
condition (M$_0$) is satisfied for $m$-a.e. $x\in E$ and
$\sup_{n\ge 1}|u_n(x)|$ is finite $m$-a.e. Of course the same is
true for $\{u^k_n(X)\}$ for every $k\ge 0$, where
$u^k_n=T_k(u_n)$. Observe that if $\{u^k_n\}$ satisfies (M$_0$)
then it satisfies (M$_1$). Therefore from Theorem \ref{stw2.1} it
follows  that $\{u^k_n\}$ converges $m$-a.e. up to a subsequence
for every $k\ge1$. From this we easily deduce that there exists a
subsequence $(n')\subset(n)$ such that $\{u_{n'}\}$ converges
$m$-a.e.
\end{proof}

\subsubsection{Right Markov processes}

Let us recall that $\BX$ satisfies Meyer's hypothesis (L) if there
exists a $\sigma$-finite Borel measure $m$ on $E$ such that
$R_\alpha(x,dy)\ll m$ for every $x\in E$ and some (and hence
every) $\alpha>0$.

The measure $m$ of the above definition will be called a reference
measure for the process $\BX$ or a reference measure for  the
resolvent $\{R_{\alpha},\alpha>0\}$.

Let $\BB_1=\{u\in\BB^{+}(E);\,u(x)\le1,\,x\in E\}$.

\begin{stw}
\label{stw2.2} Assume that $\BX$ is a right Markov process. Then
$(\BX,\BB_1)$ has the compactness property iff $\BX$ satisfies
Meyer's hypothesis \mbox{\rm(L)}.
\end{stw}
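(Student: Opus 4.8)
The plan is to prove the two implications separately, treating the easy direction first and the converse as the main work.

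\textbf{The direction (L) $\Rightarrow$ compactness property.} Assume $\BX$ satisfies Meyer's hypothesis (L) with reference measure $m$. As recalled in the introduction, Mokobodzki's observation (see \cite[Section XII, Theorem 89]{DellacherieMeyer}) together with \cite[Proposition 5.2, page 32]{Revuz} gives that $R_\alpha:\BB_b(E)\to\BB_b(E)$ is compact in the topology of uniform convergence. Applying this to a sequence $\{u_n\}\subset\BB_1$ (which is uniformly bounded by $1$), we extract a subsequence $\{R_\alpha u_{n'}\}$ converging uniformly, hence pointwise, on $E$; its limit is bounded, in particular finite everywhere. Since this can be done for one (and then, by the resolvent equation, for all) $\alpha>0$, and $m$ here is the counting measure so ``$m$-a.e.'' means ``everywhere'', the pair $(\BX,\BB_1)$ has the compactness property with $\Lambda=(0,\infty)$. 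I would also note that passing from ``for one $\alpha$'' to ``for a cofinal set of $\alpha$'' uses the resolvent identity $R_\alpha-R_\beta=(\beta-\alpha)R_\alpha R_\beta$ together with boundedness of $R_\beta$.

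\textbf{The direction compactness property $\Rightarrow$ (L).} This is the substantive part, and here the cited papers \cite{WalshWinkler,WeisWerner} come in. Suppose $(\BX,\BB_1)$ has the compactness property but $\BX$ does \emph{not} satisfy hypothesis (L). The idea is to produce, from the failure of (L), a sequence $\{u_n\}\subset\BB_1$ whose images $\{R_\alpha u_n\}$ have \emph{no} pointwise-convergent subsequence for any $\alpha$, contradicting the compactness property. Failure of (L) means: there is no $\sigma$-finite measure dominating all the kernels $R_\alpha(x,\cdot)$, $x\in E$. The technical heart is a dichotomy-type result for positive kernels: either the family $\{R_\alpha(x,\cdot):x\in E\}$ is dominated by a $\sigma$-finite measure, or one can find an uncountable ``$\ell^1$-like'' or ``disjointly supported'' structure inside it. The results of Walsh--Winkler and Weis--Werner on representability of operators on $L^\infty$ / on $C(K)$ and on the structure of non-(L) resolvents are exactly designed to turn the negation of (L) into such a combinatorial-measure-theoretic obstruction; I would invoke one of these to obtain a sequence of Borel sets $B_n$ and points $x_n$ with $R_\alpha(x_n,B_n)$ bounded away from $0$ while the $B_n$ behave almost disjointly, and then take $u_n=\mathbf 1_{B_n}\in\BB_1$.

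\textbf{Executing the contradiction.} With such $\{u_n\}$ I would argue that for any subsequence $(n')$ and any $\alpha$, the sequence $\{R_\alpha u_{n'}\}$ fails to converge pointwise: at the distinguished points the values $R_\alpha u_{n'}(x_{n'})$ (or a suitable diagonal arrangement) stay separated because of the almost-disjointness of the $B_n$, much as the canonical basis of $\ell^1$ has no convergent subsequence. Since the compactness property only asks for convergence along \emph{some} cofinal $\Lambda$ of $\alpha$'s, I must ensure the obstruction is uniform in $\alpha$; here I would use that the kernels $R_\alpha(x,\cdot)$ and $R_\beta(x,\cdot)$ are mutually absolutely continuous (resolvent equation again), so domination by a $\sigma$-finite measure for one $\alpha$ is equivalent to that for all $\alpha$, and hence the failure is simultaneous. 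This yields the contradiction and completes the equivalence.

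\textbf{Main obstacle.} The delicate point is the converse direction: translating the purely measure-theoretic statement ``the resolvent kernels are not dominated by a $\sigma$-finite measure'' into a concrete bounded sequence with non-compact image. The right framework is to regard $R_\alpha$ as an operator $\BB_b(E)\to\BB_b(E)$ (or its adjoint action on measures) and apply the representation/compactness criteria of \cite{WalshWinkler,WeisWerner}; making the quantifier over $\alpha$ harmless via the resolvent equation is a necessary but more routine accompanying step. Once the sequence $\{u_n=\mathbf 1_{B_n}\}$ is in hand, the non-convergence argument is the standard ``$\ell^1$-basis'' argument and should be short.
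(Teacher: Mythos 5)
Your first direction is correct, though it takes a slightly different route from the paper: you invoke Mokobodzki's compactness of $R_\alpha$ on $\BB_b(E)$ in the uniform norm, whereas the paper argues more elementarily --- it writes $R_\alpha(x,dy)=r_\alpha(x,y)\,m(dy)$ with $r_\alpha(x,\cdot)\in L^1(E;m)$, extracts a weak${}^*$ convergent subsequence of $\{u_n\}$ in $L^\infty(E;m)$, and reads off pointwise convergence of $R_\alpha u_{n'}(x)=\int_E r_\alpha(x,y)u_{n'}(y)\,m(dy)$ at every $x$, for all $\alpha$ simultaneously. Either argument is acceptable.

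The converse direction, however, has a genuine gap. Your plan is to extract from the failure of (L) almost disjoint sets $B_n$ and points $x_n$ with $R_\alpha(x_n,B_n)$ bounded below, set $u_n=\mathbf 1_{B_n}$, and conclude by ``the standard $\ell^1$-basis argument.'' But that argument defeats \emph{norm} compactness, not compactness in the topology of \emph{pointwise} convergence, which is what the compactness property of $(\BX,\BB_1)$ requires (here $m$ is the counting measure). Indeed, the canonical basis of $\ell^1$, viewed as functions on $\mathbb{N}$, converges pointwise to $0$; likewise, with only countably many test points $x_n$ and almost disjoint $B_n$, the sequence $\{R_\alpha u_{n'}(x_{n_0})\}_{n'}$ has at most one large term for each fixed $n_0$ and can perfectly well converge everywhere. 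To destroy pointwise convergence along \emph{every} subsequence one needs an uncountable set of test points, and this is exactly what the paper extracts from \cite{WeisWerner}: if $R_\alpha$ has no reference measure, there is a compact \emph{perfect} set $K\subset E$ such that $\mathbf 1_K R_\alpha:\BB_b(E)\rightarrow\BB_b(K)$ is surjective with a bounded positive lifting ($|f|\le\gamma c$ whenever $|g|\le c$). Since $K$ is uncountable one can choose $\{v_n\}\subset\BB_b^+(K)$ with $|v_n|\le1/\gamma$ and no pointwise convergent subsequence, and then lift to $u_n\in\BB_1$ with $\mathbf 1_KR_\alpha u_n=v_n$. This surjectivity-plus-uncountability mechanism is the missing idea; the construction you describe would not yield a counterexample. (Your remark about handling the quantifier over $\alpha$ via the resolvent identity is the right instinct and is indeed needed, since the definition only asks for convergence along some cofinal set of $\alpha$'s.)
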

\begin{dow}
Assume that $\BX$ has a reference measure $m$. Then for every
$\alpha>0$ there exists a measurable function $r_\alpha:E\times
E\rightarrow\BR_+$ such that
\[
R_\alpha(x,dy)=r_\alpha(x,y)\,m(dy)\quad\mbox{for every }x\in E.
\]
Since $\alpha R_\alpha\le 1$,
\[
\int_E r_\alpha(x,y)m(dy)=R_\alpha 1\le\alpha,\quad x\in E,
\]
which implies that $r_\alpha(x,\cdot)\in L^1(E;m)$ for every $x\in
E$. If $\{u_n\}\subset\BB_1(E)$ then of course $\{u_n\}\subset
L^\infty(E;m)$ and $\sup_n\|u_n\|_\infty<\infty$, so there exists
a subsequence $(n')\subset(n)$ such that $\{u_{n'}\}$ converges
weakly${}^*$ in $L^\infty$ to some $w\in L^\infty(E;m)$, i.e. for
every $v\in L^1(E;m)$,
\[\int_Eu_{n'}\cdot v\,dm\rightarrow\int_Ew\cdot v\,dm.
\]
In particular, for every $x\in E$,
\[
R_\alpha u_{n'}(x)=\int_Er_\alpha(x,y)u_{n'}(y)m(dy)\rightarrow
\int_Er_\alpha(x,y)w(y)m(dy)=R_\alpha w(x),
\]
which shows that $(\BX,\BB_1)$ has the compactness property.

Now assume that $(\BX,\BB_1)$ has the compactness property. From
the resolvent identity it is clear that if $R_\alpha$  has a
reference measure for some $\alpha>0$ then $\BX$ has a reference
measure. Consequently, if $\BX$ does not have a reference measure
then  for every $\alpha>0$ the resolvent $R_\alpha$ does not have
a reference measure. In \cite{WeisWerner} it is proved that if
$R_\alpha$ does not have a reference measure then there exists a
compact perfect set $K\subset E$ such that $\mathbf{1}_K\cdot
R_\alpha:\BB_b(E)\rightarrow\BB_b(K)$ is surjective. Moreover,
from the proof it follows that there exists $\gamma>0$ such that
for every $g\in\BB_b^+(K)$ such that $\sup_{x\in E}|g(x)|\le c$
for some $c>0$ there exists $f\in\BB_b^+(E)$ such that
$(\mathbf{1}_KR_\alpha)f=g$ and $|f(x)|\le\gamma c$ for $x\in E$.
Since $K$ is uncountable, there exists a sequence
$\{v_n\}\subset\BB_b^+(K)$ such that $|v_n(x)|\le1/\gamma$ for
$x\in E$, $n\ge 1$ and $\{v_n\}$ has no subsequence converging
pointwise. Thanks to the properties of the operator
$\mathbf{1}_KR_\alpha$, for every $n\ge 1$ there exists
$u_n\in\BB_1^+(E)$ such that $\mathbf{1}_KR_\alpha u_n=v_n$. This
implies that there exists a sequence $\{u_n\}\subset\BB_1^+(E)$
such that $\{R_\alpha u_n\}$ has no subsequence converging
pointwise.
\end{dow}

\begin{uw}
Let $\BX$ be a Markov process and $m$ be its excessive measure. If
for some $\alpha>0$, $R_\alpha$ carries a family $\PP$ in a
relatively compact set in the topology of $m$-a.e. convergence
then $R_\beta$ has the same property for every $\beta>\alpha$. To
see this, let us suppose that $\beta>\alpha$ and
$\{u_n\}\subset\PP^*$. Then there exists a subsequence
$(n')\subset(n)$ such that $\{R_\alpha u_{n'}\}$ is $m$-a.s.
convergent and its limit is finite $m$-a.e. By the resolvent
identity,
\begin{equation}
\label{eq2.11}
R_\beta u_{n'}=R_\alpha u_{n'}+(\alpha-\beta)R_\beta(R_\alpha u_{n'}).
\end{equation}
Set $v=\lim_{n'}R_\alpha u_{n'}$ and $w=\sup_{n'}|u_{n'}|$. By
(P1) and (P2), $w\in\PP$. Hence  $R_\alpha w<\infty$, $m$-a.e.,
and consequently, $R_\beta w<\infty$ $m$-a.e. for every
$\beta\ge\alpha$. Let $g$ be a positive Borel measurable function
on $E$ such that $\int_ER_\alpha w\cdot g\,dm<\infty$. Since $m$
is an excessive measure, applying the Lebesgue dominated
convergence theorem yields
\[
\beta\int_ER_\beta|R_\alpha u_{n'}-v|g\,dm\le
\int_E|R_\alpha u_{n'}-v|g\,dm\rightarrow 0
\]
as $n'\rightarrow\infty$. From this we conclude that there exists
a subsequence $(n'')\subset(n')$ such that $R_\beta(R_\alpha
u_{n''})$ is $m$-a.e. convergent and its limit is finite $m$-a.e.
This when combined with (\ref{eq2.11}) implies the convergence of
$\{R_\beta u_{n''}\}$. Therefore if $\mathbb{X}$ is a Markov
process then the compactness of a triple $(\mathbb{X},\PP,m)$ is
equivalent to saying that for some $\alpha>0$ the operator
$R_\alpha$ carries the family $\PP$ in a relatively compact set in
the topology of $m$-a.e. convergence.
\end{uw}

\subsubsection{Hunt processes associated with Dirichlet forms}

In the rest of this section $\BX$ is a Hunt process associated
with a regular semi-Dirichlet form on $L^2(E;m)$.  Let us recall
that a semi-Dirichlet form on $L^2(E;m)$ is a bilinear form
\[
\mathcal{E}: D[\EE]\times D[\EE]\rightarrow \mathbb{R}
\]
defined on a dense linear subspace $D[\EE]$ of $L^2(E;m)$
satisfying the following conditions
\begin{enumerate}
\item[\rm{(a)}] there exists $\alpha_0\ge 0$ such that $\EE_\alpha(u,u)\ge 0$
for every
$u\in D[\EE]$ and $\alpha\ge\alpha_0,$
\item[\rm{(b)}]there exists $K>0$ such that
$|\EE(u,v)|\le K (\EE_{\alpha_0}(u,u))^{1/2}
(\EE_{\alpha_0}(v,v))^{1/2}$ for every $u,v\in D[\EE]$,
\item[\rm{(c)}] $D[\EE]$ equipped with the inner product
$\EE^{(s)}_{\alpha_0}(\cdot,\cdot)$, where
$\EE_{\alpha}^{(s)}(u,v)=\frac12(\EE_{\alpha}(u,v)+\EE_{\alpha}(v,u))$,
is a Hilbert space,
\item[\rm{(d)}] for every $u\in D[\EE]$ and $k\ge 0$, $u\wedge k\in D[\EE]$
and $\EE(u\wedge k,u\wedge k)\le \EE(u\wedge k,u)$.
\end{enumerate}

A semi-Dirichlet form $(\EE,D[\EE])$ is called regular if there
exists a set $\mathcal{C}\subset C_0(E)\cap D[\EE]$ ($C_0(E)$ is
the set of all continuous functions on $E$ with compact suppport)
such that $\mathcal{C}$ is dense in $D[\EE]$ in the norm
determined by $\EE_{\alpha_0}$ and in $C_0(E)$ in the norm of
uniform convergence.

It is well known that with every semi-Dirichlet form
$(\EE,D[\EE])$ one can associate uniquely a Hunt process
$\mathbb{X}$ (see \cite[Section 3.3]{Oshima}).

A semi-Dirichlet form $(\EE,D[\EE])$ is called positive if (a) is
satisfied with $\alpha_0=0$ and  is called transient if the
associated Hunt process $\mathbb{X}$ is transient, i.e. there
exists a strictly positive Borel measurable function $f$ on $E$
such that $Rf$ is finite $m$-a.e. It is known that if a
semi-Dirichlet form $(\EE,D[\EE])$ is transient and positive then
there exists an extension $\FF_e$ of the domain $D[\EE]$ such
that $(\FF_e,\EE^{(s)}(\cdot,\cdot))$ is a Hilbert space.

By $\mbox{cap}$ we denote the capacity on subsets of $E$ naturally
associated with $(\EE,D[\EE])$ (see \cite[Section 2.1]{Oshima}).
We say that some property holds quasi everywhere (q.e. for short)
if it holds except for a set $N\subset E$ such that
$\mbox{cap}(N)=0$.

We say that an increasing sequence  $\{F_n\}$ of closed subsets of
$E$ is a nest if for every compact $K\subset E$,
$\mbox{cap}(K\setminus F_n)\rightarrow 0$ as $n\rightarrow
\infty.$

We say that a Borel measure $\mu$ on $E$ is smooth if it charges
no set of zero capacity and there exists a nest $\{F_n\}$ such
that $|\mu|(F_n)<\infty,\, n\ge 1$.

It is well known (see \cite[Section 4.1]{Oshima}) that for every
smooth measure $\mu$ there exists a unique continuous additive
functional $A^\mu$ of $\mathbb{X}$ in the Revuz duality with
$\mu$.

In the whole paper  for a positive smooth measure $\mu$ and $\alpha\ge 0$ we write
\[
(R_\alpha\mu)(x)=E_x\int_0^{\zeta}e^{-\alpha r}dA^\mu_r.
\]
Observe that if $f\in \mathcal{B}^{+}(E)$  then $R_\alpha(f\cdot
m)=R_\alpha f$, where $R_\alpha$ is defined by (\ref{eq1.4}). We
also write $R=R_0$.

\begin{lm}
\label{lm1.1} Let $\{u_n\}\subset\BB(E)$ be such that $R_\alpha
w<\infty$ $m$-a.e., where $w=\sup_n|u_n|$.  If $\{R_\alpha u_n\}$
is convergent $m$-a.e. then there exists a subsequence
$(n')\subset(n)$ such that $\{R_\alpha u_{n'}\}$ is convergent
q.e.
\end{lm}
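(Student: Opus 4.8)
The plan is to control the sequence $v_n:=R_\alpha u_n$ through its resolvent regularizations $\beta R_{\alpha+\beta}v_n$, exploiting that the regularization error is bounded uniformly in $n$ by a single function which tends to $0$ q.e.\ as $\beta\to\infty$, while for each fixed $\beta$ the regularizations converge q.e.\ because $R_{\alpha+\beta}$ carries $m$-a.e.\ convergence into q.e.\ convergence. To set up, put $w=\sup_n|u_n|$ and $\eta=R_\alpha w$. Since $w\ge0$, the function $\eta$ is $\alpha$-excessive, and being finite $m$-a.e.\ it is finite q.e.; by the resolvent identity, $R_{\alpha+\beta}\eta\le\beta^{-1}\eta<\infty$ q.e.\ for every $\beta>0$. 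Also $|v_n|\le R_\alpha|u_n|\le\eta$ q.e.; let $v$ be the ($m$-a.e.) limit of $\{v_n\}$, and fix Borel versions with $|v_n|\le\eta$ and $|v|\le\eta$ everywhere.

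Applying the resolvent identity to $u_n^+$ and $u_n^-$ separately (both are nonnegative with $R_\alpha$-potential finite q.e., so the subtraction is legitimate) yields the identity
\[
v_n-\beta R_{\alpha+\beta}v_n=R_{\alpha+\beta}u_n,\qquad\beta>0,
\]
whence $|v_n-\beta R_{\alpha+\beta}v_n|\le R_{\alpha+\beta}w=:\rho_\beta$ q.e. The crucial point is that $\rho_\beta$ does not depend on $n$: it is $\alpha$-excessive, decreasing in $\beta$, and $\rho_\beta\downarrow0$ q.e.\ as $\beta\to\infty$ (monotone convergence along the trajectories of $\BX$, admissible since $\eta<\infty$ q.e.).

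Next, for each fixed $\beta$ I would show $\beta R_{\alpha+\beta}v_n\to\beta R_{\alpha+\beta}v$ q.e.: one has $|R_{\alpha+\beta}v_n-R_{\alpha+\beta}v|\le R_{\alpha+\beta}\big(\sup_{k\ge n}|v_k-v|\big)$, the functions $\sup_{k\ge n}|v_k-v|$ decrease to some $\psi$ with $0\le\psi\le2\eta$ and $\psi=0$ $m$-a.e., and hence (dominated convergence along trajectories, valid because $R_{\alpha+\beta}(2\eta)<\infty$ q.e.) $R_{\alpha+\beta}\big(\sup_{k\ge n}|v_k-v|\big)\downarrow R_{\alpha+\beta}\psi$ q.e.; finally $R_{\alpha+\beta}\psi=0$ q.e.\ since $\psi\cdot m$ is the zero measure, so for each $k$ the positive additive functional $\int_0^{\cdot}(\psi\wedge k)(X_s)\,ds$ has zero Revuz measure and is therefore q.e.\ identically $0$, and letting $k\to\infty$ gives $R_{\alpha+\beta}\psi=E_\cdot\int_0^{\infty}e^{-(\alpha+\beta)t}\psi(X_t)\,dt=0$ q.e. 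Combining the three ingredients: for q.e.\ $x$ and any $\varepsilon>0$, pick $\beta\in\BN$ with $\rho_\beta(x)<\varepsilon/4$ and then $n_0$ with $|\beta R_{\alpha+\beta}v_n(x)-\beta R_{\alpha+\beta}v_{n'}(x)|<\varepsilon/2$ for $n,n'\ge n_0$; splitting $v_n(x)-v_{n'}(x)$ through $\beta R_{\alpha+\beta}v_n(x)$ and $\beta R_{\alpha+\beta}v_{n'}(x)$ and using the bounds above gives $|v_n(x)-v_{n'}(x)|<\varepsilon$. Hence $\{v_n(x)\}$ is Cauchy for q.e.\ $x$, so $\{R_\alpha u_n\}$ converges q.e.; in particular the required subsequence exists.

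The step I expect to be the main obstacle is the assertion ``$R_{\alpha+\beta}\psi=0$ q.e.\ whenever $\psi\ge0$ and $\psi=0$ $m$-a.e.'' — equivalently, that $\BX$ started from q.e.\ point spends zero Lebesgue time in $m$-negligible sets. This is exactly where the Dirichlet-form structure is used (the Revuz correspondence, \cite[Section 4.1]{Oshima}, together with the identity $R_\gamma f=R_\gamma(f\cdot m)$), and it has to be set up carefully, since $w$, and hence $\psi$, need not be locally $m$-integrable; truncating at level $k$ and passing to the limit $k\to\infty$ removes that difficulty. A secondary technical point is ensuring that all the monotone/dominated convergence arguments (for $\rho_\beta$ and for $\sup_{k\ge n}|v_k-v|$) yield q.e.\ rather than merely $m$-a.e.\ statements, which rests on the bound $R_{\alpha+\beta}\eta\le\beta^{-1}\eta<\infty$ q.e.
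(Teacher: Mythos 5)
Your argument is correct, but it follows a genuinely different route from the paper's. The paper truncates the \emph{inputs}: it chooses cutoffs $g_k\nearrow 1$ with $g_k w\in L^2(E;m)$, bounds the truncation error uniformly in $n$ by $\sup_n|R_\alpha(g_ku_n)-R_\alpha u_n|\le R_\alpha(w|1-g_k|)$, shows by a hitting-time argument that this bound tends to $0$ q.e., and then, for each fixed $k$, uses the energy identity $\EE_\alpha(R_\alpha f,R_\alpha f)=(f,R_\alpha f)_{L^2(E;m)}$ together with \cite[Theorem 2.2.5]{Oshima} to upgrade $L^2$-convergence of $\{R_\alpha(g_ku_n)\}_n$ to q.e. convergence along a subsequence. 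You instead regularize the \emph{outputs}, writing $v_n-\beta R_{\alpha+\beta}v_n=R_{\alpha+\beta}u_n$ so that the regularization error is dominated, uniformly in $n$, by $R_{\alpha+\beta}w$, which decreases to $0$ q.e. as $\beta\to\infty$; for fixed $\beta$ you obtain q.e. convergence of $\beta R_{\alpha+\beta}v_n$ from the fact that $R_{\alpha+\beta}$ annihilates, q.e., nonnegative functions vanishing $m$-a.e. Both proofs rest on the same transfer principle (excessive functions that are finite, resp. null, $m$-a.e. are finite, resp. null, q.e.; equivalently, from q.e. starting points the process spends no time in $m$-negligible sets), and the two facts you flag as delicate are indeed standard in the regular semi-Dirichlet form setting of this section — the first by the fine-closedness/supermartingale argument used in the paper's Lemma \ref{lm4.1}, the second by the Revuz correspondence or simply because $R_{\alpha+\beta}\psi$ is an excessive function vanishing $m$-a.e. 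What your approach buys: it avoids the Hilbert-space/energy machinery entirely (and in particular the passage through $L^2$-Cauchyness of $\{R_\alpha(g_ku_n)\}_n$, which in the paper requires an extra subsequence extraction), it yields convergence of the whole sequence rather than merely a subsequence, and it would carry over verbatim to any right process for which the transfer principle above holds; what the paper's approach buys is that it stays within the quadratic-form toolbox already set up for the rest of Section 2.3.
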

\begin{dow}
Let $\{g_k\}$ be a sequence of Borel measurable functions on $E$
such that $0\le g_k(x)\le1$, $g_k(x)\nearrow 1$ for $x\in E$ and
$g_k\cdot w\in L^2(E;m)$ for every $k\ge 1$. Write $u_n^k=g_ku_n$,
$v^k_n=R_\alpha(u^k_n)$, $v_n=R_\alpha(u_n)$. Then
\begin{equation}
\label{eq2.7}
|v_n^k(x)-v_n(x)|\le R_\alpha(w|1-g_k|)(x),\quad x\in E.
\end{equation}
Let $B=\{\inf_kR_\alpha w|1-g_k|>0\}$ and let $K$ be a compact set
such that $K\subset B$. Then
\begin{align*}
P_m(\sigma_K<\infty)&=P_m(\sigma_K<\infty, \inf_{k\ge 1}E_x\left(
\int_{\sigma_K}^\infty w|1-g_k|(X_r)\,dr|\FF_{\sigma_K}\right)>0)
\\&\quad \le P_m(\sigma_K<\infty,\inf_{k\ge 1}E_x\left(\int_0^\infty
w|1-g_k|(X_r)\,dr|\FF_{\sigma_K}\right)>0)=0
\end{align*}
since $\inf_kR_\alpha(w|1-g_k|)(x)=0$ for $m$-a.e. $x\in E$. From
this and (\ref{eq2.7}) we conclude that
\begin{equation}
\label{eq2.8} \lim_{k\rightarrow\infty}\sup_{n\ge
1}|v_n^k(x)-v_n(x)|=0\quad\mbox{for q.e. }x\in E.
\end{equation}
Therefore to prove the lemma it suffices to show  that for every
$k\ge 0$, $\{v_n^k\}$ is convergent q.e. But this follows
immediately from the inequality
\begin{align*}
\mathcal{E}_\alpha(R_\alpha u_n^k-R_\alpha u_m^k,\, R_\alpha
u_n^k-R_\alpha u_m^k)&= (u_n^k-u_m^k,\,
R_\alpha u_n^k-R_\alpha u_m^k)_{L^2(E;m)}\\
&\le2\|g_k\cdot w\|_{L^2(E;m)}\cdot\|v_n^k-v_m^k\|_{L^2(E;m)}
\end{align*}
and \cite[Theorem 2.2.5]{Oshima}.
\end{dow}
\medskip

\begin{wn}
\label{wn2.7} A triple $(\mathbb{X},\PP,m)$ has the compactness
property iff $(\mathbb{X},\PP,\mbox{\rm cap})$ has the compactness
property.
\end{wn}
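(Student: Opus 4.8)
The plan is to prove both implications by reducing $m$-a.e. convergence and $\mathrm{cap}$-a.e. convergence of the resolvent images $\{R_\alpha u_n\}$ to one another, using Lemma \ref{lm1.1} for one direction and an elementary comparison of null sets for the other. First I would observe that, since $\BX$ is a Hunt process (hence a Markov process) and $m$ is an excessive measure for it, the Remark following Proposition \ref{stw2.2} applies: compactness of $(\BX,\PP,m)$ is equivalent to the statement that for \emph{some} single $\alpha>0$ the operator $R_\alpha$ carries $\PP$ into a relatively compact set in the topology of $m$-a.e. convergence; the same equivalence holds with $\mathrm{cap}$ in place of $m$, since $\mathrm{cap}$ is also a (sub)additive set function and the resolvent identity argument in that Remark goes through verbatim once we know $R_\beta w<\infty$ q.e. whenever $R_\alpha w<\infty$ q.e., which follows from $\{R_\alpha w=\infty\}$ having zero capacity and the standard domination $R_\beta\le(1+(\beta-\alpha)R_\beta)R_\alpha$ for $\beta>\alpha$. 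So it suffices to work with a fixed $\alpha$.

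For the implication ``$(\BX,\PP,\mathrm{cap})$ compact $\Rightarrow$ $(\BX,\PP,m)$ compact'': this is the easy direction. Since $\mathrm{cap}$ charges no set of zero capacity and, conversely, every set of zero capacity is $m$-negligible (capacity zero implies $m$-measure zero is a standard fact for Dirichlet forms, e.g. \cite[Section 2.1]{Oshima}), q.e. convergence implies $m$-a.e. convergence and a q.e.-finite limit is $m$-a.e. finite. Hence a subsequence extracted to converge q.e. (with q.e.-finite limit) automatically converges $m$-a.e. with $m$-a.e. finite limit, giving compactness of $(\BX,\PP,m)$ directly from the definition.

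For the converse ``$(\BX,\PP,m)$ compact $\Rightarrow$ $(\BX,\PP,\mathrm{cap})$ compact'': given $\{u_n\}\subset\PP^*$, write $w=\sup_n|u_n|$; by (P1) and (P2) the positive and negative parts of the $u_n$, and their suprema, lie in $\PP$, so $w\in\PP$ and, after discarding an $m$-null set, $R_\alpha w<\infty$ $m$-a.e.; since $\mathrm{cap}$ is absolutely continuous with respect to $m$ in the relevant sense (the set $\{R_\alpha w=\infty\}$ is an excessive-type exceptional set, hence has zero capacity by \cite[Section 2.1]{Oshima}), in fact $R_\alpha w<\infty$ q.e. Apply the $m$-compactness of the triple to extract a subsequence $(n')$ with $\{R_\alpha u_{n'}\}$ convergent $m$-a.e. to an $m$-a.e. finite limit; writing $R_\alpha u_{n'}=R_\alpha u_{n'}^+ - R_\alpha u_{n'}^-$ we may apply Lemma \ref{lm1.1} separately to $\{u_{n'}^+\}$ and $\{u_{n'}^-\}$ (each dominated by $w$ with $R_\alpha w<\infty$ $m$-a.e., and each with $m$-a.e. convergent resolvent images after a further subsequence) to obtain a sub-subsequence $(n'')$ along which both $\{R_\alpha u_{n''}^\pm\}$, and hence $\{R_\alpha u_{n''}\}$, converge q.e.; the q.e.-finiteness of the limit follows from q.e.-finiteness of $R_\alpha w$. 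This verifies the definition of the compactness property for $(\BX,\PP,\mathrm{cap})$.

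The main obstacle is the bookkeeping in the converse direction: Lemma \ref{lm1.1} is stated for a single $m$-a.e. convergent sequence $\{R_\alpha u_n\}$ with one fixed $\alpha$, so one must first reduce the definition of compactness (which a priori allows a set $\Lambda$ of admissible $\alpha$'s) to a single $\alpha$ via the Remark after Proposition \ref{stw2.2}, then handle the signed functions $u_n\in\PP^*$ by splitting into positive and negative parts before the lemma becomes applicable, and finally take care that all the ``$m$-a.e.'' exceptional sets arising (from $R_\alpha w<\infty$, from the $m$-a.e. limit, from each application of the lemma) are in fact of zero capacity — which is exactly where the absolute-continuity-type relation between $\mathrm{cap}$ and $m$ for Dirichlet forms is used. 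None of these steps is deep, but they must be assembled in the right order.
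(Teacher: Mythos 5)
Your proposal is correct and follows exactly the route the paper intends: the corollary is stated as an immediate consequence of Lemma \ref{lm1.1} (which upgrades $m$-a.e.\ convergence of $\{R_\alpha u_n\}$ to q.e.\ convergence along a subsequence), combined with the standard fact that capacity-zero sets are $m$-null for the converse direction. Two of your bookkeeping steps are unnecessary detours — the definition quantifies over $\{u_n\}\subset\PP$, which consists of nonnegative functions, so no splitting into positive and negative parts is needed, and rather than invoking a capacity version of the Remark after Proposition \ref{stw2.2} (whose proof integrates against $m$ and so does not transfer ``verbatim'') one can simply apply Lemma \ref{lm1.1} for each $\alpha$ in a countable $\Lambda$ and diagonalize — but neither affects the correctness of the argument.
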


For $B\in \BB(E)$ set
\[
\PP(B)=\{u\in\BB(E);u(x)=0,x\in E\setminus B\}
\]
and for $\alpha\ge 0$ and $u\in\BB^+(E)$ set
\[R_{\alpha}^{E\setminus B}u(x)=E_x\int_0^{\sigma_B}e^{-\alpha t}u(X_t)\,dt,
\quad H_B^\alpha u(x)=E_xe^{-\alpha\sigma_B}u(X_{\sigma_B}).
\]

\begin{stw}
\label{stw.2} Let $B\in\BB(E)$. If $(\BX,\PP,m)$ has the
compactness property then $(\BX^B,\PP(B),m)$ has the compactness
property.
\end{stw}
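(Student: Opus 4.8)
The plan is to deduce the compactness of $(\BX^B,\PP(B),m)$ from that of $(\BX,\PP,m)$ via the first-entrance decomposition of the resolvent, controlling the resulting hitting term by upgrading $m$-a.e.\ convergence to quasi-everywhere convergence. Write $R^B_\alpha u(x)=E_x\int_0^{D_{E\setminus B}}e^{-\alpha t}u(X_t)\,dt$ for the resolvent of the part process $\BX^B$. Since $D_{E\setminus B}=0$ for $x\in E\setminus B$ and $D_{E\setminus B}=\tau_B$, $P_x$-a.s.\ for $x\in B$, this vanishes off $B$ and equals $E_x\int_0^{\tau_B}e^{-\alpha t}u(X_t)\,dt$ on $B$. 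Splitting the integral defining $R_\alpha u$ at $\tau_B$ and using the strong Markov property of $\BX$ gives, for every $u\in\BB^+(E)$,
\[
R^B_\alpha u=R_\alpha u-H^\alpha_{E\setminus B}(R_\alpha u)\quad\text{on }B,\qquad R^B_\alpha u=0\quad\text{on }E\setminus B,
\]
where $H^\alpha_{E\setminus B}v(x)=E_x[e^{-\alpha\tau_B}v(X_{\tau_B})]$ with the convention $v(\Delta)=0$; in particular $H^\alpha_{E\setminus B}(R_\alpha w)\le R_\alpha w$ for every $w\in\BB^+(E)$. Hence it suffices, given $\{u_n\}\subset\PP(B)$, to find a cofinal $\Lambda\subset(0,\infty)$ and a subsequence along which both $R_\alpha u_{n'}$ and $H^\alpha_{E\setminus B}(R_\alpha u_{n'})$ converge $m$-a.e.\ to $m$-a.e.\ finite limits.

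Fix $\{u_n\}\subset\PP(B)$ and put $w=\sup_nu_n$, which lies in $\PP$ by (P2) (hence in $\PP(B)$). By Corollary \ref{wn2.7} the triple $(\BX,\PP,\text{cap})$ also has the compactness property; applying it to the interleaved sequence $w,u_1,w,u_2,\dots$ yields a countable $\Lambda$ with $\sup\Lambda=+\infty$ and a subsequence $(n')$ such that, for every $\alpha\in\Lambda$, $R_\alpha u_{n'}\to g_\alpha$ q.e.\ with $g_\alpha$ finite q.e., and $R_\alpha w<\infty$ q.e.\ (this in particular legitimizes the ``w.l.o.g.\ $R_\alpha w<\infty$'' step as in the proof of Theorem \ref{stw2.1}). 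Enlarge the associated $\text{cap}$-polar exceptional set to a Borel properly exceptional set $N$: by the theory of Hunt processes attached to semi-Dirichlet forms \cite{Oshima}, the process started at any $x\notin N$ a.s.\ never meets $N$. Thus for $x\in B\setminus N$ we have $X_{\tau_B}\notin N$ on $\{\tau_B<\infty\}$, $P_x$-a.s., so $R_\alpha u_{n'}(X_{\tau_B})\to g_\alpha(X_{\tau_B})$, $P_x$-a.s.; since $0\le R_\alpha u_{n'}\le R_\alpha w$ and $E_x[e^{-\alpha\tau_B}R_\alpha w(X_{\tau_B})]=H^\alpha_{E\setminus B}(R_\alpha w)(x)\le R_\alpha w(x)<\infty$, dominated convergence gives $H^\alpha_{E\setminus B}(R_\alpha u_{n'})(x)\to H^\alpha_{E\setminus B}(g_\alpha)(x)$. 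By the decomposition, $R^B_\alpha u_{n'}\to g_\alpha-H^\alpha_{E\setminus B}(g_\alpha)$ q.e.\ on $B$ and $\to0$ on $E\setminus B$, the limit lying in $[0,R_\alpha w]$ and hence finite q.e. As $\text{cap}$-null sets are $m$-null, this is $m$-a.e.\ convergence to an $m$-a.e.\ finite limit, so $(\BX^B,\PP(B),m)$ has the compactness property.

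The crux is the hitting term $H^\alpha_{E\setminus B}(R_\alpha u_{n'})$: by itself, $m$-a.e.\ convergence of $R_\alpha u_{n'}$ says nothing about it, because the hitting kernel $x\mapsto H^\alpha_{E\setminus B}(x,\cdot)$ may charge $m$-null sets. What rescues the argument is the Dirichlet-form structure: Corollary \ref{wn2.7} (ultimately Lemma \ref{lm1.1}) allows one to replace $m$-a.e.\ convergence by q.e.\ convergence, and a q.e.-exceptional set, being polar, is a.s.\ avoided by the process from a q.e.\ starting point — exactly what is needed to pass the limit through $H^\alpha_{E\setminus B}$ by dominated convergence. The remaining points — that $\PP(B)$ inherits (P1)–(P2) from $\PP$, that $\tau_B$ is a stopping time and $X_{\tau_B}$ is well defined for the Hunt process $\BX$, and the measurability bookkeeping for the limit function — are routine.
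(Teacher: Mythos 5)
Your proof is correct and follows essentially the same route as the paper: the Dynkin (first-entrance) decomposition $R_\alpha u=R^B_\alpha u+H^\alpha_{E\setminus B}(R_\alpha u)$, the upgrade from $m$-a.e.\ to q.e.\ convergence via Lemma \ref{lm1.1}/Corollary \ref{wn2.7} so that the limit can be evaluated along $X_{\tau_B}$, and dominated convergence for the hitting term with dominating function $R_\alpha w(X_{\tau_B})$, $w=\sup_n u_n$. Your additional remarks (interleaving $w$ into the sequence, enlarging the exceptional set to a properly exceptional one) only make explicit steps the paper treats as routine.
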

\begin{dow}
Let $\{u_n\}\subset\PP^+(B)$. By the assumption there exists  a
set $\Lambda\subset(0,+\infty)$ such that $\sup \Lambda=+\infty$
and a subsequence $(n')\subset(n)$ such that for every $\alpha\in
\Lambda$ the sequence $\{R_\alpha u_{n'}\}$ is convergent $m$-a.e.
and its limit is finite $m$-a.e. By Dynkin's formula,
\[
R_{\alpha} u_{n'}=R_\alpha^Bu_{n'}+H_{E\setminus
B}^\alpha(R_\alpha u_{n'}), \quad m\mbox{-a.e.}
\]
Therefore it suffices to show that up to a subsequence,
$\{H^\alpha_{E\setminus B}(R_\alpha u_{n'})\}$ is $m$-a.e.
convergent and its limit is finite $m$-a.e. But this follows
immediately from Lemma \ref{lm1.1}, because q.e. convergence of
$\{R_\alpha u_{n'}\}$ implies that $\{e^{-\alpha\tau_B}R_\alpha
u_{n'}(X_{\tau_{B}})\}$ is convergent $P_x$-a.s. for $m$-a.e.
$x\in E$, moreover we have $|R_\alpha u_n|\le R_\alpha w$, $m$-a.e., where
$w=\sup_n|u_n|\in\PP$. Therefore we can apply the Lebesgue
dominated convergence theorem to sequence $\{e^{-\alpha\tau_B}R_\alpha
u_{n'}(X_{\tau_{B}})\}$ , because $\{|R_\alpha u_n|>R_\alpha
w\}$ as a finely open $m$-negligible set is exceptional,  which in turn implies that $|R_\alpha u_n|\le
R_\alpha w$ q.e., hence that $|(R_\alpha
u_n)|(X_{\tau_{B}})\le(R_\alpha w)(X_{\tau_{B}})$, $P_x$-a.s. for
$m$-a.e. $x\in E$ and $E_xR_\alpha
w(X_{\tau_{B}})=E_x\int_{\tau_{B}}^\infty e^{-\alpha r}w(X_r)\,dr
\le E_x\int_0^\infty e^{-\alpha r}w(X_r)\,dr=w(x)<\infty$ for
every $x\in E$.
\end{dow}

\begin{uw}
\label{uw2.9} Observe that the assertion of Proposition
\ref{stw.2} holds true if we replace the process $\BX^{B}$ killed
outside a Borel set $B$ by the process $\BX^{A}$ killed with rate
$-dL_t$, where $L_t=e^{-A_t}$ for some positive continuous
additive functional $A$ of $\BX$ (for notation see \cite[Theorem
A.2.11]{Fukushima}). To see this it suffices to repeat the proof
of Proposition \ref{stw.2} with $\tau_B$ replaced by the stopping
time
\[
\zeta_A=\inf\{t<\zeta;\, A_t\ge Z\},
\]
where $Z$ is a random variable of exponential distribution with
mean $1$ independent of $\BX$ and satisfying
$Z(\theta_s(\omega))=(Z(\omega)-s)\vee0$.
\end{uw}

Let us recall that a Markov process $\BX^0$ on $E_0\in\BB(E)$ is
called a subprocess of $\BX$ if its semigroup $\{p_t^0,t\ge 0\}$
extends naturally to $E$ subordinate to $\{p_t,t\ge 0\}$, i.e. for
every $f\in\BB^+(E)$ and $t\ge 0$,
\[
\overline{p^0_t}f\le p_tf,
\]
where $\overline{p^0_t}f(x)=p^0_tf_{|_{E_0}}(x)$ for $x\in E_0$
and $\overline{p^0_t}f(x)=0$ for $x\in E\setminus E_0$.

\begin{wn}
\label{wn2.222} Let $\BX^0$ be a subprocess of $\BX$. If
$(\BX,\PP,m)$ has the compactness property then $(\BX^0,\PP,m)$
has the compactness property.
\end{wn}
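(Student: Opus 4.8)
The plan is to deduce Corollary \ref{wn2.222} from Proposition \ref{stw.2} by exploiting the fact that a subprocess is, up to the subordination of semigroups, built from $\BX$ by killing. Since $\overline{p^0_t}f \le p_t f$ for all $f \in \BB^+(E)$ and $t \ge 0$, the associated resolvents satisfy $\overline{R^0_\alpha} f \le R_\alpha f$ for every $\alpha > 0$ and $f \in \BB^+(E)$, where $\overline{R^0_\alpha}$ denotes the natural extension of the resolvent of $\BX^0$ to $E$ (vanishing off $E_0$). The key point is that domination of the resolvents, together with a representation of $\overline{R^0_\alpha}$ as a composition of $R_\alpha$ with a ``multiplicative functional'' type kernel, lets us run essentially the argument of Proposition \ref{stw.2}: one writes $R_\alpha u_{n'} = \overline{R^0_\alpha} u_{n'} + (\,R_\alpha u_{n'} - \overline{R^0_\alpha} u_{n'}\,)$ and shows both the ``killed'' part and the ``remainder'' are $m$-a.e.\ relatively compact.

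First I would recall the standard structure theorem: any subprocess $\BX^0$ of $\BX$ is obtained (on $E_0$) by killing $\BX$ with a right multiplicative functional $\{M_t\}$, $0 \le M_t \le 1$, so that $\overline{R^0_\alpha} f(x) = E_x \int_0^\infty e^{-\alpha t} M_t f(X_t)\,dt$ for $x \in E_0$ (and $0$ otherwise); this covers both the killing-outside-$B$ case (Proposition \ref{stw.2}) and the killing-at-rate-$-dL_t$ case (Remark \ref{uw2.9}). Given $\{u_n\} \subset \PP^+$, the compactness property of $(\BX,\PP,m)$ yields $\Lambda$ with $\sup\Lambda = +\infty$ and a subsequence $(n')$ along which $\{R_\alpha u_{n'}\}$ converges $m$-a.e.\ to a finite limit for every $\alpha \in \Lambda$; by Lemma \ref{lm1.1} (after passing to a further subsequence) this convergence holds q.e. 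The domination $\overline{R^0_\alpha} u_n \le R_\alpha u_n \le R_\alpha w$ with $w = \sup_n|u_n| \in \PP$ gives the uniform envelope needed to apply dominated convergence.

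The main step is then to show $\{\overline{R^0_\alpha} u_{n'}\}$ is $m$-a.e.\ convergent along a further subsequence. I would use the additive/multiplicative functional identity
\[
R_\alpha u_{n'}(x) = \overline{R^0_\alpha} u_{n'}(x) + E_x\Big[ e^{-\alpha T_0}\, R_\alpha u_{n'}(X_{T_0}) \Big], \quad m\mbox{-a.e.},
\]
where $T_0$ is the (killing/exit) time beyond which $M_t = 0$ — in the case of killing outside $B$ this is precisely Dynkin's formula with $T_0 = \tau_B$, exactly as in Proposition \ref{stw.2}, and in the general case it follows from the strong Markov property applied at the killing time. Since $\{R_\alpha u_{n'}\}$ converges q.e.\ and $T_0$ is an exceptional-set-avoiding stopping time, $\{e^{-\alpha T_0} R_\alpha u_{n'}(X_{T_0})\}$ converges $P_x$-a.s.\ for $m$-a.e.\ $x$; the bound $|R_\alpha u_{n'}(X_{T_0})| \le R_\alpha w(X_{T_0})$ with $E_x[e^{-\alpha T_0} R_\alpha w(X_{T_0})] \le w(x) < \infty$ permits the Lebesgue dominated convergence theorem, so the second term on the right converges $m$-a.e.\ to a finite limit. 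Subtracting from the convergent sequence $\{R_\alpha u_{n'}\}$ gives convergence of $\{\overline{R^0_\alpha} u_{n'}\}$ $m$-a.e.\ to a finite limit for every $\alpha \in \Lambda$, which is exactly the compactness property of $(\BX^0,\PP,m)$.

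The hard part will be handling the generality of ``subprocess'': one must justify the multiplicative-functional representation and the Dynkin-type identity uniformly, and check that the relevant killing time still avoids exceptional sets so that q.e.\ convergence transfers to $P_x$-a.s.\ convergence of the traces. For the two concrete killing mechanisms this is already done (Proposition \ref{stw.2} and Remark \ref{uw2.9}); the cleanest write-up is probably to observe that an arbitrary subprocess is a time-change/killing combination reducible to those cases, or simply to note that the envelope argument via $R_\alpha w$ only ever used the domination $\overline{R^0_\alpha} \le R_\alpha$ and the q.e.\ convergence supplied by Lemma \ref{lm1.1}, and that the decomposition $R_\alpha = \overline{R^0_\alpha} + (R_\alpha - \overline{R^0_\alpha})$ with the remainder a positive kernel applied to the q.e.-convergent, uniformly $R_\alpha w$-dominated family $\{R_\alpha u_{n'}\}$ suffices. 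I expect the proof in the paper to be a short remark of this kind, citing Proposition \ref{stw.2} and Remark \ref{uw2.9}.
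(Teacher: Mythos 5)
Your proposal is correct and follows essentially the same route as the paper: the paper's proof simply cites \cite[page 103]{BG} to represent the subprocess $\BX^0$ as $\BX$ killed at rate $-dL_t$ with $L_t=e^{-A_t}$ for a PCAF $A$, and then invokes Remark \ref{uw2.9}, which reruns the argument of Proposition \ref{stw.2} with $\tau_B$ replaced by the randomized killing time $\zeta_A$ --- precisely the Dynkin-type decomposition, q.e.\ convergence via Lemma \ref{lm1.1}, and $R_\alpha w$-domination that you describe. Your anticipation that the paper's proof would be a short remark citing Proposition \ref{stw.2} and Remark \ref{uw2.9} is exactly right.
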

\begin{dow}
By \cite[page 103]{BG}, $\BX^0$ is equivalent to the process $\BX$
killed with rate $-dL_t$, where  $L_t=e^{-A_t}$ for some PCAF $A$
of $\BX$. Therefore the desired result follows from Remark
\ref{uw2.9}.
\end{dow}

\begin{stw}
\label{stw2.3} Assume that $\EE$ is positive and $(\BX,\PP,m)$ has
the compactness property. If $\{u_n\}\subset\FF_e\cap\PP$ and
\begin{equation}
\label{eq2.100} \sup_{n\ge1}\mathcal{E}(u_n,u_n)<\infty
\end{equation}
then there exists a subsequence $(n')\subset(n)$ such that
$\{u_{n'}\}$  is $m$-a.e. convergent and its limit is $m$-a.e.
finite.
\end{stw}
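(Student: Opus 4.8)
The plan is to reduce the statement to the corollary of Theorem~\ref{stw2.1} concerning processes tight in the Skorokhod topology $J_1$: it suffices to show that, after passing to a subsequence, the c\`adl\`ag processes $t\mapsto\tilde u_n(X_t)$, with $\tilde u_n$ the quasi-continuous version of $u_n$, are tight in $J_1$ under $P_x$ for q.e.\ $x$ (hence for $m$-a.e.\ $x$), and then to invoke that corollary --- whose proof already performs the truncation at levels $k$, applies Theorem~\ref{stw2.1} to the families $\{u_n\wedge k\}_n\subset\PP$, and diagonalizes over $k$. Since $\EE$ is positive and (by the standing assumption of this subsection) transient, the hypothesis $\sup_n\EE(u_n,u_n)=\sup_n\EE^{(s)}(u_n,u_n)<\infty$ means that $\{u_n\}$ is bounded in the Hilbert space $(\FF_e,\EE^{(s)})$; consequently each $u_n$ has a quasi-continuous version $\tilde u_n$, the path $t\mapsto\tilde u_n(X_t)$ is c\`adl\`ag on compact intervals $P_x$-a.s.\ for q.e.\ $x$, and the Fukushima-type decomposition
\[
\tilde u_n(X_t)=\tilde u_n(X_0)+M^{[u_n]}_t+N^{[u_n]}_t
\]
holds, where $M^{[u_n]}$ is a martingale additive functional with $\langle M^{[u_n]}\rangle=A^{\mu_n}$ ($\mu_n$ being the energy measure of $u_n$, of total mass controlled by $\EE^{(s)}(u_n,u_n)$) and $N^{[u_n]}$ is a continuous additive functional of zero energy.

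The core of the argument is to extract, from the uniform energy bound, tightness of $\{\tilde u_n(X)\}$ under $P_x$ for q.e.\ $x$. It is convenient to work with the truncations $u_n^k:=u_n\wedge k$: by properties (b), (d) of the semi-Dirichlet form and positivity of $\EE$ one has $u_n^k\in D[\EE]$ with $\sup_n\EE(u_n^k,u_n^k)\le K^2\sup_n\EE(u_n,u_n)$, while $0\le u_n^k\le w:=\sup_n u_n\in\PP$ (so $u_n^k\in\PP$) and $\|u_n^k\|_\infty\le k$. For the martingale parts $M^{[u_n^k]}$ one controls $E_x[\langle M^{[u_n^k]}\rangle_h]=E_x[A^{\mu_n^k}_h]$: the energy measures $\mu_n^k$ have uniformly bounded mass, and the chain rule for energy measures (using $\|u_n^k\|_\infty\le k$) shows their $1$-potentials $R_1\mu_n^k$ are bounded in $D[\EE]$, so a subsequence extraction gives uniform control of these brackets as $h\to0$ and Aldous' criterion applies. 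The delicate ingredient is the zero-energy part $N^{[u_n^k]}$, which is not of bounded variation; here I would use a Lyons--Zheng (forward--backward martingale) representation, bounding $E_x[\sup_{t\le h}|N^{[u_n^k]}_t|^2]$ by the brackets of the forward martingale $M^{[u_n^k]}$ and of its time reversal, uniformly in $n$ after the above extraction --- this supplies the missing tightness estimate.

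Granting tightness, the cited corollary of Theorem~\ref{stw2.1} yields a subsequence $(n')$ along which $\{u_{n'}\wedge k\}$ converges $m$-a.e.\ for every $k$. Since the compactness property applied to the constant sequence $(w)$ gives $R_\alpha w<\infty$, hence $w<\infty$, $m$-a.e., while $0\le u_{n'}-u_{n'}\wedge k\le(w-k)^+\to0$ $m$-a.e.\ as $k\to\infty$ uniformly in $n'$, one obtains, for $m$-a.e.\ $x$, $\limsup_{n'}u_{n'}(x)-\liminf_{n'}u_{n'}(x)\le(w(x)-k)^+$ for all $k$, so $\{u_{n'}(x)\}$ converges to a limit at most $w(x)<\infty$; thus $\{u_{n'}\}$ converges $m$-a.e.\ to an $m$-a.e.\ finite limit. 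I expect the main obstacle to be the uniform-in-$n$ tightness of $\{\tilde u_n(X)\}$ --- concretely, the uniform control of the zero-energy additive functionals $N^{[u_n^k]}$ near $t=0$ from the energy bound alone --- which is the role of the Lyons--Zheng type decomposition. Finally, a purely Hilbert-space route (weak limits of $\{u_n\}$ in $\FF_e$, Mazur's lemma, the resolvent identity) does identify the q.e.\ limit of $\{R_\alpha u_{n'}\}$ as $R_\alpha u$, but it does not by itself yield $m$-a.e.\ convergence of $\{u_{n'}\}$, since $u_n-\alpha R_\alpha u_n$ cannot be made small uniformly in $n$; tightness of the processes is what closes this gap.
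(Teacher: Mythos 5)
There is a genuine gap, and it sits exactly where you predict: in extracting uniform\hbox{-}in\hbox{-}$n$ tightness of $\{\tilde u_n(X)\}$ under $P_x$ for $m$-a.e.\ $x$ from the energy bound alone. Everything the bound $\sup_n\EE(u_n,u_n)<\infty$ gives you through the Fukushima decomposition is \emph{integrated} in the starting point: the energy-measure identity controls $E_m[\langle M^{[u_n^k]}\rangle_h]=\int_E E_x[\langle M^{[u_n^k]}\rangle_h]\,m(dx)\le Ch$, and the Lyons--Zheng representation likewise produces estimates under $P_m$, not under $P_x$ for $m$-a.e.\ fixed $x$. To feed the tightness corollary of Theorem \ref{stw2.1} you need $\sup_n E_x[\,\cdot\,]\to 0$ as $h\to 0$ for $m$-a.e.\ $x$, and the supremum over $n$ does not commute with the integral over $x$: the brackets $\langle M^{[u_n^k]}\rangle_h$ may concentrate near different points for different $n$, so a bound on $\sup_n\int_E E_x[\ldots]\,m(dx)$ says nothing about $\int_E\sup_n E_x[\ldots]\,m(dx)$. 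Your phrase ``a subsequence extraction gives uniform control of these brackets as $h\to 0$'' is precisely the step that has no justification. This is consistent with the paper's own remark that (\ref{eq2.100}) yields only condition (M$_2$) (the $L^p$ version), whereas Theorem \ref{stw2.1} needs the pointwise condition (M$_1$); the proposition is therefore \emph{not} proved in the paper as a corollary of Theorem \ref{stw2.1}.

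The paper's actual proof stays entirely in $L^2(E;m)$, and your closing paragraph dismisses this route too quickly. While it is true that $u_n-\alpha R_\alpha u_n$ cannot be made small in $\FF_e$, the elementary spectral inequality $\|\alpha R_\alpha v-v\|_{L^2(E;m)}^2\le\alpha^{-1}\EE(v,v)$ applied to $v=u_n^k=T_k(u_n)\eta$ (with $\eta\in D[\EE]$ bounded and strictly positive, so that $\sup_n\EE(u_n^k,u_n^k)\le c(k,\eta)$) makes $\|\alpha R_\alpha u_n^k-u_n^k\|_{L^2}$ small \emph{uniformly in $n$} by taking $\alpha$ large. The compactness property gives $m$-a.e.\ convergence of $\{R_\alpha u_{n'}^k\}$ along a subsequence, which upgrades to $L^2$ convergence by domination ($u_n^k\le k\eta$); the triangle inequality then shows $\{u_{n'}^k\}$ is Cauchy in $L^2(E;m)$, and $m$-a.e.\ convergence of $\{u_{n'}\}$ follows by a further extraction and a diagonal argument over $k$. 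In short: the quantitative $L^2$ resolvent estimate is the ingredient that replaces (M$_1$), and it is exactly what your argument is missing.
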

\begin{dow}
Let $\eta\in D[\mathcal{E}]$ be such that  $\eta>0$, $m$-a.e. and
$\|\eta\|_\infty<\infty$. By known properties of Dirichlet forms,
\[
\mathcal{E}(T_k(w)\eta,T_k(w)\eta)\le \|\eta\|_\infty
\mathcal{E}(u_n,u_n)+k\mathcal{E}(\eta,\eta).
\]
Hence
\begin{equation}
\label{eq2.9}
\sup_n\mathcal{E}(T_k(u_n)\eta,\,T_k(u_n)\eta)<\infty.
\end{equation}
We can assume that $u_n\ge 0$, $m$-a.e.  for every $n\ge 1$,
because  from (P1) it follows that $u_n^{+}\in\PP$ and it is known
that $u_n^{+}\in\FF_e$ a and $
\mathcal{E}(u_n^{+},u_n^{+})\le\mathcal{E}(u_n,u_n)$.  Under the
assumption of nonnegativity of $u_n$, $u^k_n\equiv
T_k(u_n)\cdot\eta\in\FF_e\cap\PP\cap L^2(E;m)$. By an elementar calculus,
\[
\|\alpha R_\alpha u_n^k-u_n^k\|_{L^2(E;m)}\le\alpha^{-1}
\mathcal{E}(u_n^k,u_n^k),
\]
which when combined with (\ref{eq2.9}) gives
\begin{equation}
\label{eq2.10} \|\alpha R_{\alpha}
u_n^k-u_n^k\|_{L^2(E;m)}\le\alpha^{-1}c(k,\eta).
\end{equation}
By the assumption there exists a subsequence $(n')\subset(n)$ and
a subset $\Lambda\subset(0,\infty)$ such that $\sup
\Lambda=+\infty$ and $\{\alpha R_\alpha u^k_{n'}\}$ is convergent
in $L^2(E;m)$ for every $\alpha\in \Lambda$. This and
(\ref{eq2.10}) imply that there exists a further subsequence
$(n'')\subset(n')$ such that $\{u^k_{n''}\}$ is convergent in
$L^2(E;m)$. From this it follows easily that $\{u_{n'''}\}$ is
convergent $m$-a.e. for some further  subsequence
$(n''')\subset(n'')$.
\end{dow}

\begin{uw}
If $\{u_n\}\subset \FF_e$ satisfies (\ref{eq2.100}) then by the
calculations in the proof of \cite[Lemma 1.5.4.]{Fukushima} it
satisfies condition (M$_2$).
\end{uw}

In the sequel by $\mathcal{T}$ we denote the set of all stopping times
to given filtration $\FF$.
\begin{df}
We say that a Borel measurable function $u$ on $E$ is of class
(FD) if for $m$-a.e. $x\in E$ the family
$\{u(X_{\tau}),\,\tau\in\mathcal{T}\}$ is uniformly integrable
under the measure $P_x$.
\end{df}

By $\mathbf{D}$ we denote the set of all Borel measurable
functions on $E$ of class (FD).

\begin{uw}
\label{uw.d} (i) Observe that  $D[\EE]\subset \mathbf{D}$. Indeed,
each positive $u\in D[\EE]$ is majorized by the $\alpha$-potential
$e^{\alpha}_u$ (the smallest $\alpha$-potential majorizing $u$)
and $e^\alpha_u=U_\alpha\mu$ for some measure  $\mu$ of finite energy integral
(see \cite[Theorem 2.3.1]{Oshima}).
Therefore by \cite[Theorem 4.1.10]{Oshima},
\[
e^{\alpha}_u(x)=E_x\int_0^{\infty}e^{-\alpha t}\,dA^{\mu}_t.
\]
From the above formula we easily deduce that $e^\alpha_u\in
\mathbf{D}$ which implies that $u\in\mathbf{D}$. Since $u^+,
u^-\in D[\EE]$ if $u\in D[\EE]$,  we get the result.
\smallskip\\
(ii) If we assume additionally that $\EE$ is positive and
transient then in the same manner as in (i) we can show that
$\FF_e\subset\mathbf{D}$.
\end{uw}

For $\alpha \ge 0$ and $\rho\in\mathcal{B}(E)$ such  that $\rho>0$
let us define the space
\[
\mathbf{D}_\alpha=\{u\in\mathbf{D};\, \|u\|_\alpha<\infty\},
\]
where
\[
\|u\|_\alpha
=\int_E \sup_{\tau\in\mathcal{T}}E_xe^{-\alpha\tau}|u(X_\tau)|\,m(dx).
\]

In the sequel for a given $v\in\mathcal{B}^+(E)$ we write
\[
[0,v]=\{u\in\mathcal{B}(E); \,\, 0\le u\le v\}.
\]

\begin{stw}
Let $(\EE,D[\EE])$ be a regular symmetric Dirichlet form and $\BX$
be a  Hunt process associated with $(\EE,D[\EE])$. Then
$(\BX,\BB_1,m)$ has the compactness property  iff for every
$\alpha>0$ the mapping $R_\alpha:\DD_0\rightarrow \DD_\alpha$ is
order compact.
\end{stw}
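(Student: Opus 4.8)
The plan is to prove the two implications separately, relying throughout on the following elementary remark: for $f\in\BB^+(E)$ the potential $R_\alpha f$ is $\alpha$-excessive, so $t\mapsto e^{-\alpha t}R_\alpha f(X_t)$ is a $P_x$-supermartingale and, by optional stopping, $\sup_{\tau\in\mathcal T}E_xe^{-\alpha\tau}R_\alpha f(X_\tau)=R_\alpha f(x)$; together with $m$-symmetry of $R_\alpha$ this gives
\[
\|R_\alpha f\|_\alpha=\int_E R_\alpha f\cdot\rho\,dm=\int_E f\cdot R_\alpha\rho\,dm,\qquad f\in\BB^+(E).
\]
By the properties of the weight $\rho$ it follows that $\mathbf 1\in\DD_0$, that $R_\alpha$ maps $\DD_0^+$ into $\DD_\alpha^+$, and hence that $0\le R_\alpha u\le R_\alpha v\in\DD_\alpha\subset\DD$ (in particular $R_\alpha v$ is finite q.e.) whenever $v\in\DD_0^+$ and $u\in[0,v]$. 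Also, taking $\tau=0$, $\|g\|_\alpha\ge\int_E\rho\,|g|\,dm$ for $g\in\DD_\alpha$, so convergence in $\DD_\alpha$ forces convergence in $L^1(E;\rho\,m)$ and, along a subsequence, $m$-a.e.\ convergence.

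\emph{Sufficiency.} Suppose $R_\alpha\colon\DD_0\to\DD_\alpha$ is order compact for every $\alpha>0$ and let $\{u_n\}\subset\BB_1$. Since $\BB_1\subset[0,\mathbf 1]$ and $\mathbf 1\in\DD_0$, for each $\alpha$ the sequence $\{R_\alpha u_n\}$ lies in the relatively compact set $R_\alpha[0,\mathbf 1]\subset\DD_\alpha$. Fixing $\alpha_j\uparrow\infty$ and extracting successively (each time passing from $\DD_{\alpha_j}$-convergence to $m$-a.e.\ convergence, and finally diagonalising over $j$) I obtain a single subsequence $(n')$ along which $\{R_{\alpha_j}u_{n'}\}$ converges $m$-a.e.\ to an $m$-a.e.\ finite limit for every $j$; with $\Lambda=\{\alpha_j:j\ge1\}$ this is the compactness property of $(\BX,\BB_1,m)$.

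\emph{Necessity.} Suppose $(\BX,\BB_1,m)$ has the compactness property; fix $\alpha>0$, $v\in\DD_0^+$ and $\{u_n\}\subset[0,v]$. First I note that $(\BX,[0,v],m)$ has the compactness property: since $m$ is excessive, by $(\ref{eq1.5})$ the hypothesis implies that $R_\beta$ is order compact on $L^1(E;m)$ for every $\beta>0$, and applying this to the intervals $[0,v_k]$, where $v_k=(v\wedge k)\mathbf 1_{B_k}\uparrow v$ with $m(B_k)<\infty$, and controlling the error by $0\le R_\alpha u-R_\alpha(u\wedge v_k)\le R_\alpha(v-v_k)\downarrow0$ uniformly over $u\in[0,v]$, one extracts a subsequence $(n')$ with $R_\alpha u_{n'}\to g$ $m$-a.e., $g$ finite $m$-a.e. (Equivalently, writing $u_n=h_nv$ with $h_n\in\BB_1$ and passing to a weak-$*$ limit $h_{n'}\to h$ in $L^\infty(E;m)$, one gets $R_\alpha u_{n'}\to R_\alpha(hv)$ $m$-a.e.\ via the $m$-density of $R_\alpha$.) By Lemma \ref{lm1.1} with $w=\sup_n|u_n|\le v$, a further extraction gives $g_{n'}:=R_\alpha u_{n'}\to g$ quasi everywhere, where $0\le g_{n'},g\le G:=R_\alpha v$ q.e.\ and $G\in\DD_\alpha\subset\DD$.

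The decisive step, and the one I expect to be the main obstacle, is to upgrade this q.e.\ convergence to convergence in $\DD_\alpha$, i.e.\ to make it uniform over stopping times. I would choose quasi-continuous Borel versions and apply the quasi-uniform convergence theorem for Dirichlet forms to obtain, after a further extraction, a nest $\{F_l\}$ on which $g_{n'}\to g$ uniformly. Fix q.e.\ $x$ and $\varepsilon>0$. Since $G\in\DD$, the family $\{G(X_\tau):\tau\in\mathcal T\}$ is uniformly integrable under $P_x$, so there is $M$ with $\sup_\tau E_x[\mathbf 1_{\{G(X_\tau)>M\}}G(X_\tau)]<\varepsilon$; since $P_x(\sigma_{E\setminus F_l}<\zeta)\to0$ as $l\to\infty$ for q.e.\ $x$, there is $l$ (with also $x\in F_l$) such that $MP_x(\sigma_{E\setminus F_l}<\zeta)<\varepsilon$; and for $n'$ large, $|g_{n'}-g|\le\varepsilon$ on $F_l$. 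Since $|g_{n'}-g|\le 2G$ q.e.\ and $\{X_\tau\in E\setminus F_l\}\subset\{\sigma_{E\setminus F_l}<\zeta\}$, splitting according to whether $X_\tau\in F_l$ yields, for every $\tau\in\mathcal T$ and $n'$ large,
\begin{align*}
E_xe^{-\alpha\tau}|g_{n'}(X_\tau)-g(X_\tau)|
&\le\varepsilon+2E_x\big[\mathbf 1_{\{X_\tau\in E\setminus F_l\}}G(X_\tau)\big]\\
&\le\varepsilon+2E_x\big[\mathbf 1_{\{G(X_\tau)>M\}}G(X_\tau)\big]+2MP_x(\sigma_{E\setminus F_l}<\zeta)\le 5\varepsilon .
\end{align*}
Hence $\sup_{\tau\in\mathcal T}E_xe^{-\alpha\tau}|g_{n'}(X_\tau)-g(X_\tau)|\to0$ for q.e.\ $x$; being dominated by $2G(x)$ with $\int_E 2\rho\,G\,dm=2\|G\|_\alpha<\infty$, dominated convergence gives $\|g_{n'}-g\|_\alpha\to0$. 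Therefore $R_\alpha[0,v]$ is relatively compact in $\DD_\alpha$, which is what was to be shown.
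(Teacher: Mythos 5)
Your sufficiency argument is correct and is essentially the paper's: $\mathbf 1\in\DD_0$, order compactness applied to $[0,\mathbf 1]$, and the bound $\|g\|_\alpha\ge\int_E|g|\rho\,dm$ give $m$-a.e.\ convergence of $\{R_\alpha u_{n'}\}$ along a subsequence. The gap is in the necessity direction, exactly at the step you flag as the main obstacle. You pass from q.e.\ convergence of $g_{n'}=R_\alpha u_{n'}$ to uniform convergence on the sets of a nest by invoking ``the quasi-uniform convergence theorem for Dirichlet forms''. That theorem (\cite[Theorem 2.1.4]{Fukushima}) upgrades $\EE_\alpha$-\emph{convergence} to quasi-uniform convergence of a subsequence; it says nothing about a sequence that merely converges q.e. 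And the statement you actually need is false in general: take $\EE(u,u)=\int_0^1|u'|^2\,dx$ on $H^1_0(0,1)$, where every singleton has capacity bounded away from zero, so that sets of capacity zero are empty and a nest must eventually contain each compact subset; then quasi-uniform convergence amounts to locally uniform convergence, and a travelling bump of shrinking support converges to $0$ pointwise (hence q.e.) while no subsequence converges locally uniformly. So the decisive step is unsupported, and without it your uniform-integrability splitting over $F_l$ has nothing to start from.

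What the paper does at this point --- and what your proposal discards after Step 1 --- is an energy argument. One truncates, setting $u_n^k=g_kT_k(u_n)$ with $g_kv\in L^2(E;m)$, so that $R_\alpha u_n^k\in D[\EE]$; the identity $\EE_\alpha(R_\alpha f,R_\alpha f)=(f,R_\alpha f)_{L^2(E;m)}$ then converts $L^2$-convergence of $\{R_\alpha u_n^k\}$ (which follows from the $m$-a.e.\ convergence you already have together with the domination $R_\alpha u_n^k\le R_\alpha(g_kv)\in L^2(E;m)$) into $\EE_\alpha$-convergence; and \cite[Lemma 5.1.1]{Fukushima} turns $\EE_\alpha$-convergence into $E_x\sup_{t\ge0}e^{-\alpha t}|R_\alpha u_n^k-R_\alpha u_m^k|(X_t)\to0$ for q.e.\ $x$, which is precisely the uniformity over stopping times you are after. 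The truncation error is then controlled directly in the $\|\cdot\|_\alpha$-norm by
\[
\|R_\alpha u_n^k-R_\alpha u_n\|_\alpha\le E_{\rho\cdot m}\int_0^\infty e^{-\alpha r}\bigl(\mathbf 1_{\{v>k\}}v+v|1-g_k|\bigr)(X_r)\,dr,
\]
which tends to $0$ as $k\to\infty$ because $v\in\DD_0$. To repair your proof you must either reinstate this truncation-plus-energy step, or prove quasi-uniform convergence for the particular sequence of $\alpha$-potentials at hand by some other means; it does not follow from q.e.\ convergence alone.
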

\begin{dow}
Assume that $(\BX,\BB_1,m)$ has the compactness property. Let $v\in
\DD_0$ and $\{u_n\}\subset[0,v]$. Let $\{g_k\}$ be a sequence of
positive Borel measurable functions on $E$  such that $g_k\nearrow
1$ as $k\rightarrow\infty$ and $g_k\cdot v\in L^2(E;m)$. Put
$u_n^k=g_kT_k(u_n)$, $v_n^k=R_\alpha(u_n^k)$, $v_n=R_\alpha(u_n)$.
Then
\begin{align}
\label{eqa.2} \nonumber \EE_\alpha(R_\alpha u_n^k-R_\alpha
u_m^k,R_\alpha u_n^k-R_\alpha u_m^k)&= (u_n^k-u_m^k;\,R_\alpha
u_n^k-R_\alpha u_m^k)\\
&\le 2\|g_k\cdot v\|_{L^2(E;m)}\cdot\|v_n^k-v_m^k\|_{L^2(E;m)}.
\end{align}
By the assumption, without loss of generality we may assume that
for every $k\in\BN$ the sequence $\{v_n^k\}$ is $m$-a.e.
convergent as $n\rightarrow\infty$. Since $v_n^k\le
R_\alpha(g_k\cdot v)\in L^2(E;m)$, $\{v_n^k\}$ converges in
$L^2(E;m)$, and hence,  by (\ref{eqa.2}),  in $\EE_\alpha$. By
\cite[Lemma 5.1.1]{Fukushima} this implies that there exists a
subsequence (still denoted by $n$) such that for q.e. $x\in E$,
\[
\lim_{n,m\rightarrow\infty}E_x\sup_{t\ge 0}e^{-\alpha
t}|v_n^k(X_t)-v_m^k(X_t)|=0.
\]
Hence
\[
\lim_{n,m\rightarrow\infty}\sup_{\tau\in \mathcal{T}}
E_xe^{-\alpha \tau}|v_n^k(X_\tau)-v_m^k(X_\tau)|=0
\]
for q.e. $x\in E$. By the Lebesgue dominated convergence theorem,
$\|v_n^k-v_m^k\|_\alpha\rightarrow0$ as $n,m\rightarrow\infty$, so
it is enough to show that $\|v_n^k-v_n\|_\alpha\le C(k)$ for some
$C(Kk$ such that $C(k)\rightarrow 0$ as $k\rightarrow\infty$. To
this end, let us observe that
\begin{align*}
\|v_n^k-v_n\|_\alpha&\le\int_E\sup_{\tau\in\mathcal{T}}E_xe^{-\alpha\tau}
|R_\alpha u_n^k(X_\tau)-R_\alpha u_n(X_\tau)|\,m(dx)\\&
\le\int_E\sup_{\tau\in\mathcal{T}}E_x\Big(e^{-\alpha\tau}
E_x\Big(\int_\tau^\infty e^{-\alpha(r-\tau)} |u_n^k(X_r)-
u_n(X_r)|\,dr|\FF_\tau\Big)\Big)\,m(dx)\\
&\le\int_EE_x\int_0^\infty e^{-\alpha r}
|u_n^k(X_r)-u_n(X_r)|\,dr\,m(dx)\\
&\le \int_EE_x\int_0^\infty
e^{-\alpha r}|g_kT_k(u_n)-u_n|(X_r)\,dr\,m(dx)\\
&\le\tau\int_EE_x\int_0^\infty e^{-\alpha
r}\mathbf{1}_{\{v>k\}}v(X_r)\,m(dx)\\
&\quad +\int_EE_x\int_0^\infty e^{-\alpha r}
(v|g_k-1|)(X_r)\,dr\,m(dx)\equiv C(k).
\end{align*}
Since $v\in \DD_0$, both integrals on the right-hand side of last
inequality are finite. Therefore by the Lebesgue dominated
convergence theorem, $C(k)\rightarrow 0$ as $k\rightarrow\infty$,
which shows the ``if" part. Now assume that
$R_\alpha:\mathbf{D}_0\rightarrow \mathbf{D}_\alpha$ is order
compact. Let $\{u_n\}\subset\BB^+(E)$be such that $u_n(x)\le 1$
for $x\in E$. It is clear that $1\in \mathbf{D}_0$ and
$\{u_n\}\subset[0,1]$, so by order compactness of
$R_\alpha:\mathbf{D}_0\rightarrow \mathbf{D}_\alpha$ it follows
that  there exists a subsequence (still denoted by $n$) such that
\[
\lim_{n,m\rightarrow\infty}\|R_\alpha u_n-R_\alpha u_m\|_\alpha=0.
\]
In particular $\|R_\alpha u_n-R_\alpha u_m\|_{L^1(E;\rho\cdot
m)}\rightarrow 0$ as $n,m\rightarrow\infty$ from which we conclude
that $(\mathbb{X},\mathcal{B}_1,m)$ has the compactness property.
\end{dow}

\nsubsection{Elliptic systems with measure data on Dirichlet
space} \label{sec3}

In this section we assume that $(\EE,D[\EE])$ is a transient
regular semi-Dirichlet form on $L^2(E;m)$. By $\BX$ we denote a
Hunt process associated with $(\EE,D[\EE])$.

In the sequel we adopt the convention that an $N$-dimensional
process $Y$ or function $u$ has some property defined for
one-dimensional processes or functions (for instance $Y$ is a MAF
or CAF of $\BX$, $u$ is of class (FD) etc.) if its each coordinate
has this property.

Let $F:E\times\BR^N\rightarrow\BR^N$ be a measurable function and
$\mu=(\mu_1,\dots,\mu_N)$ be a Borel measure on $E$ such that
\begin{enumerate}
\item[(H1)] $\mu_i$ is a smooth measure such that $R|\mu_i|<\infty$ q.e.,
\item[(H2)] for every $r\ge 0$ the mapping $x\mapsto\sup_{|y|\le
r}|F(x,y)|$ belongs to $qL^1(E;m)$,
\item[(H3)] for every $x\in E$ the mapping $y\mapsto F(x,y)$ is continuous,
\item[(H4)] there exists a non-negative function $G$ such that
$RG<\infty$ q.e. and for every $x\in E$ and $y\in\BR^N$,
\[
\langle F(x,y),y\rangle\le G(x)|y|.
\]
\end{enumerate}

\begin{df}
We say that a Borel measurable function $f$ on $E$ is
quasi-integrable if for q.e. $x\in E$,
\[
P_x\Big(\int_0^{\zeta\wedge T}|f(X_r)|\,dr<\infty,\,T>0\Big)=1.
\]
By $qL^1(E;m)$ we denote that set of all quasi-integrable functions on $E$.
\end{df}

\begin{uw}
In the literature one can find another definition of
quasi-integrability which we call here quasi-integrability in the
analytic sense.  According to this definition a measurable
function $f$ on $E$ is quasi-integrable if for every
$\varepsilon>0$ there exists an open set $U_{\varepsilon}\subset
E$ such that $\mbox{cap}(U_{\varepsilon})<\varepsilon$ and
$f|_{E\setminus U_{\varepsilon}}\in L^1(E\setminus
U_{\varepsilon};m)$. In \cite{KR:JFA} it is proved that if $f$ is
quasi-integrable in the analytic sense then it is
quasi-integrable.
\end{uw}

We say that a real process $M$ is a local martingale additive
functional (local MAF) of $\mathbb{X}$ if it is an additive
functional of $\mathbb{X}$ (see \cite[Section 5.1]{Fukushima}) and
$M$ is a local martingale under $P_x$ (with respect to the
filtration $\FF$) for each $x\in E\setminus N$, where $N$ is an
exceptional set of $M$.

We would like to emphasize  that the notion of local MAF differs
from the notion of MAF  locally of finite energy considered in
\cite[Section 5.5]{Fukushima}. For instance, $M$ having the last
property is local AF, i.e. is additive on $[0,\zeta)$ only.

Let us consider the following system
\begin{equation}
\label{eq3.1}
-Au=F(x,u)+\mu.
\end{equation}

\begin{df}
We say that a function $u:E\rightarrow\BR^N$ is a solution of
(\ref{eq3.1}) if
\begin{enumerate}
\item[(a)] $u$ is quasi-continuous and $u\in\mathbf{D}$,
\item[(b)] $u(X_{t\wedge\zeta})\rightarrow 0$ as $t\rightarrow\infty$ $P_x$-a.s.
for q.e. $x\in E$,
\item[(c)] $E\ni x\mapsto F(x,u(x))\in qL^1(E;m)$,
\item[(d)] there exists a local ($N$-dimensional) MAF $M$ of $\BX$
such that for q.e. $x\in E$ and every $T>0$,
\begin{align}
\label{eq3.2}
u(X_t)&=u(X_{T\wedge\zeta})+\int_t^{T\wedge\zeta}F(X_r,u(X_r))\,dr+
\int_t^{T\wedge\zeta}dA_r^\mu\nonumber\\
&\quad-\int_t^{T\wedge\zeta}dM_r,\quad t\in[0, T\wedge\zeta],\quad
P_x\mbox{-a.s.}
\end{align}
\end{enumerate}
\end{df}

\begin{uw}
\label{uw.fkf} Observe that if $u:E\rightarrow \BR^N$ is a
measurable function  such that
$E_{x}\int_{0}^{\zeta}|F(X_{r},u(X_r))|\,dr<\infty$ and
\[
u(x)=E_{x}\int_{0}^{\zeta}F(X_{r},u(X_r))\,dr
+E_{x}\int_{0}^{\zeta}dA^{\mu}_{r}
\]
for q.e. $x\in E$, then $u$ is a solution of (\ref{eq3.1}).
Indeed,  by the Markov property,
\[
u(X_t)=E_x\Big(\int_{t}^{\zeta}F(X_{r},u(X_r))\,dr+\int_{t}^{\zeta}dA^{\mu}_{r}
|\FF_t\Big),\quad t\in[0,\zeta].
\]
From this it is easily seen that $u\in \mathbf{D}$ and $u$
satisfies (b). It is also clear that (c) is satisfied. That $u$ is
quasi-continuous it follows from \cite[Lemma 4.2]{KR:JFA}. Now let
us put
\[
M^x_{t}=E_x\Big(\int_0^{\zeta} F(r,u(X_r))\,dr +\int_0^{\zeta}
dA^{\mu}_{r}|\FF_{t}\Big)-u(X_0),\quad t\ge 0.
\]
By \cite[Lemma A.3.5]{Fukushima} there exists a c\`adl\`ag process
$M$ such that
\[P_x(M_t=M^x_t,\,\, t\ge 0)=1\]
for q.e. $x\in E$. It is clear that $M$ is a MAF of $\mathbb{X}$
and (d) is satisfied.
\end{uw}

We first show that if $F$ is monotone, i.e. $F$ satisfies the
condition
\begin{enumerate}
\item[(H5)] $\langle F(x,y)-F(x,y'),y-y'\rangle\le 0$, $y,y'\in\BR^N$,
$x\in E$,
\end{enumerate}
then the probabilistic solution of ({\ref{eq3.1}) is unique.

In the sequel for a given $x\in \BR^N$ such that $x\neq 0$ we
write
\[\hat{\mbox{sgn}}(x)=\frac{x}{|x|}\,.\]

\begin{stw}
\label{prop3.2} Assume that $\mbox{\rm{(H5)}}$ holds. Then there
exists at most one solution of $\mbox{\rm{(\ref{eq3.1})}}$.
\end{stw}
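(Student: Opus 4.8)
The plan is to prove uniqueness by a standard monotonicity-comparison argument carried out on the probabilistic representations. Suppose $u^1$ and $u^2$ are two solutions of (\ref{eq3.1}), with associated local MAFs $M^1$ and $M^2$. Set $\bar u = u^1 - u^2$ and $\bar M = M^1 - M^2$; then, subtracting the two copies of (\ref{eq3.2}), for q.e.\ $x\in E$ and every $T>0$,
\[
\bar u(X_t)=\bar u(X_{T\wedge\zeta})+\int_t^{T\wedge\zeta}\big(F(X_r,u^1(X_r))-F(X_r,u^2(X_r))\big)\,dr-\int_t^{T\wedge\zeta}d\bar M_r,\quad t\in[0,T\wedge\zeta],\ P_x\text{-a.s.}
\]
Note that the measure parts $A^\mu$ cancel, which is the reason the argument only sees the monotonicity hypothesis (H5) and not any sign condition. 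The idea is to apply the Itô–Tanaka / Meyer change-of-variables formula to $|\bar u(X_t)|$ (or, to avoid the singularity at $0$, to a smooth convex approximation $\phi_\varepsilon(\bar u(X_t))$ with $\phi_\varepsilon\to|\cdot|$), obtaining
\[
|\bar u(X_t)|\le |\bar u(X_{T\wedge\zeta})|+\int_t^{T\wedge\zeta}\langle\hat{\mbox{sgn}}(\bar u(X_r)),F(X_r,u^1(X_r))-F(X_r,u^2(X_r))\rangle\,dr-\int_t^{T\wedge\zeta}d\widetilde M_r
\]
for some local martingale $\widetilde M$ (the stochastic integral of $\hat{\mbox{sgn}}(\bar u)$ against $d\bar M$, plus a nonpositive correction from the local-time/Hessian term by convexity). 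By (H5), since $\hat{\mbox{sgn}}(\bar u(X_r))$ is a positive multiple of $u^1(X_r)-u^2(X_r)$ on $\{\bar u(X_r)\ne 0\}$, the drift term is $\le 0$. Hence $|\bar u(X_{t\wedge\zeta})|$ is a nonnegative local supermartingale on $[0,\zeta)$, and the real work is to upgrade this to $\bar u = 0$ q.e.

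The key steps, in order: (i) justify the Itô–Tanaka formula in this additive-functional setting — here I would use $\phi_\varepsilon$ smooth, apply the classical Itô formula to $\phi_\varepsilon(\bar u(X_t))$ which is legitimate since $\bar u(X)$ is a semimartingale on $[0,\zeta)$ (difference of a process of finite variation and a local martingale by (\ref{eq3.2})), then let $\varepsilon\to 0$ using that $\phi_\varepsilon''\ge 0$ to discard the second-order term and dominated convergence in the drift, exploiting (H2) to control $F(X_r,u^i(X_r))$ in $qL^1$; (ii) localize: let $\{\tau_k\}$ be a reducing sequence of stopping times with $\tau_k\uparrow\zeta$ making each piece of $\widetilde M$ a true martingale, take expectations $E_x$ over $[t\wedge\tau_k\wedge T,\ \tau_k\wedge T]$ to get $E_x|\bar u(X_{t\wedge\tau_k\wedge T})|\le E_x|\bar u(X_{\tau_k\wedge T})|$; (iii) pass to the limit in $k$ and in $T$ using condition (b) of the definition of solution, namely $u^i(X_{T\wedge\zeta})\to 0$ as $T\to\infty$ $P_x$-a.s., together with the uniform integrability built into membership in $\mathbf D$ (class (FD)) applied to $\bar u$, to conclude $E_x|\bar u(X_{t\wedge\zeta})|\le \lim_{T\to\infty}E_x|\bar u(X_{T\wedge\zeta})|=0$; (iv) in particular, taking $t=0$ and using $P_x(X_0=x)=1$, deduce $|\bar u(x)|=0$ for q.e.\ $x\in E$.

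The main obstacle I anticipate is step (iii): making the limit $T\to\infty$ rigorous. Although (b) gives pointwise convergence $\bar u(X_{T\wedge\zeta})\to 0$ $P_x$-a.s., passing this through the expectation requires uniform integrability of $\{\bar u(X_{T\wedge\zeta}):T>0\}$ under $P_x$ — and this is exactly what is supplied by $u^1,u^2\in\mathbf D$, since the family $\{u^i(X_\tau):\tau\in\mathcal T\}$ is uniformly integrable, so $\{\bar u(X_{T\wedge\zeta})\}$ is too. A secondary technical point is that the local martingale $\bar M$ is only additive on $[0,\zeta)$ and only a local (not true) martingale, so the reducing-sequence localization in step (ii) must be handled carefully, and all identities hold only q.e.\ in $x$ and $P_x$-a.s.; one must track the single exceptional set throughout. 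Once uniform integrability is in hand, the supermartingale inequality closes the argument cleanly, and since $\bar u$ vanishes q.e., $u^1=u^2$ q.e., which is the asserted uniqueness.
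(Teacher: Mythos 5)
Your proposal is correct and follows essentially the same route as the paper's proof: subtract the two copies of (\ref{eq3.2}) so the measure terms cancel, apply the Tanaka--Meyer formula together with (H5) to obtain a nonnegative local supermartingale inequality for $|u^1-u^2|(X)$, localize with a fundamental sequence for the local martingale, and use membership in $\mathbf{D}$ (uniform integrability) together with condition (b) to conclude $u^1=u^2$ q.e. The only cosmetic difference is that you justify the Tanaka--Meyer step via smooth convex approximations, whereas the paper invokes it directly.
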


\begin{dow}
Let $u_{1}, u_{2}$ be solutions of (\ref{eq3.1}) and $M_{1},
M_{2}$ be local MAFs associated with $u_{1}, u_{2}$, respectively.
Put $u=u_{1}-u_{2}$ and $M=M_{1}-M_{2}$. Then
\[
u(X_{t})=u(X_{\tau\wedge\zeta})+\int_{t}^{\tau\wedge\zeta}
(F(\cdot,u_{1})-F(\cdot,u_{2}))(X_{r})\,dr
-\int_{t}^{\tau\wedge\zeta}\,dM_{r},\quad 0\le
t\le\tau\wedge\zeta,\, P_x\mbox{-a.s.}
\]
for every bounded $\tau\in\mathcal{T}$ and q.e. $x\in E$. By the
Tanaka-Meyer formula and (H5), for q.e. $x\in E$ we have
\begin{align*}
|u(X_{t})|&\le |u(X_{\tau\wedge\zeta})|+\int_{t}^{\tau\wedge\zeta}
\langle F(\cdot,u_{1})-F(\cdot,u_{2})(X_{r}), \hat{\mbox{sgn}}
(u(X_{r}))\rangle\,dr\\&\quad
-\int_{t}^{\tau\wedge\zeta}\langle\hat{\mbox{sgn}}(u(X)_{r-}),dM_{r}\rangle\\
&\le |u(X_{\tau\wedge\zeta})|-\int_{t}^{\tau\wedge\zeta}
\langle\hat{\mbox{sgn}}(u(X)_{r-}),dM_{r}\rangle,\quad 0\le
t\le\tau\wedge\zeta, \quad P_x\mbox{-a.s.}
\end{align*}
Let $\{\tau_{k}\}$ be a fundamental sequence for the local
martingale
$\int_{0}^{\cdot\wedge\zeta}\langle\hat{\mbox{sgn}}(u(X)_{r-})
,dM_{r}\rangle$. Putting $t=0$ in the above inequality with $\tau$
replaced by $\tau_k$ and then taking the expectation with respect
to $P_x$ we get
\begin{equation}
\label{eq3.111} |u(x)|\le E_{x}|u(X_{\tau_k\wedge\zeta})|
\end{equation}
for q.e. $x\in E$.  Since $u\in\mathbf{D}$,  letting
$k\rightarrow\infty$ we conclude that $|u|=0$ q.e.
\end{dow}

\begin{tw}
\label{tw.1} Assume that $(\BX,\BB_1,m)$ has the compactness
property and $\mbox{\rm{(H1)--(H4)}}$ are satisfied. Then there
exists a solution of \mbox{\rm(\ref{eq3.1})}.
\end{tw}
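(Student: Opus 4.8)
The plan is to construct the solution as a limit of solutions to approximate systems with bounded, integrable data, and to use the compactness property to pass to the limit in the crucial nonlinear term $F(\cdot,u)$. First I would truncate: for $n\ge 1$ set $F_n(x,y)=\frac{n}{n\vee|F(x,y)|}F(x,y)$ (or a similar truncation together with a cut-off of the measure $\mu$), so that $F_n$ is bounded and still satisfies (H3) and the sign condition (H4) with the same $G$ (indeed $\langle F_n(x,y),y\rangle\le G(x)|y|$ because $F_n$ is a nonnegative multiple of $F$). For the data one replaces $\mu$ by $\mu_k=\mathbf{1}_{F_k}\cdot\mu$ along a nest, and $G$ by $G\wedge k$, to get $R|\mu^{(n)}_i|\in L^\infty$ and $RG_n$ bounded. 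For each such regularized problem, since the data are now in $L^1$ (even in suitable $L^p$), existence and uniqueness of a solution $u_n$ — representable by the nonlinear Feynman–Kac formula as in Remark \ref{uw.fkf} — should follow from the scalar-type results recalled from \cite{KR:JFA,Kl:AMPA} together with a fixed-point/monotone iteration argument; here one uses that $F_n$ is Lipschitz-free but bounded, so Schauder-type compactness applies coordinatewise after freezing $u$ inside $F_n$.

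Next I would derive a priori estimates uniform in $n$. Testing the BSDE \eqref{eq3.2} (for $u_n$) with $\hat{\mbox{sgn}}(u_n)$ via the Tanaka–Meyer formula and using (H4) in the form $\langle F_n(\cdot,u_n),\hat{\mbox{sgn}}(u_n)\rangle\le G$ yields, after taking expectations and using a fundamental sequence of stopping times as in the proof of Proposition \ref{prop3.2},
\[
|u_n(x)|\le E_x\int_0^\zeta G(X_r)\,dr+E_x\int_0^\zeta d A^{|\mu|}_r = R G(x)+R|\mu|(x)\qquad\text{q.e. }x\in E,
\]
so $\{u_n\}$ is bounded by a fixed function $v:=RG+R|\mu|$ which is finite q.e. and, by (H1),(H4), is a potential; in particular $v\in\mathbf{D}$ and $v(X_{t\wedge\zeta})\to0$. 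Because $v/(1+v)\le 1$, the normalized sequence $w_n:=u_n/(1+v)$ (coordinatewise, or rather $|u_n|/(1+v)\in\BB_1$) lies in $\BB_1$. I would then verify condition (M$_1$) for $\{u_n\}$: from the BSDE representation, $p_t u_n(x)-u_n(x) = -E_x\int_0^{t\wedge\zeta}F_n(X_r,u_n(X_r))\,dr - E_x\int_0^{t\wedge\zeta} dA^{\mu_n}_r + (\text{boundary terms})$; the first two terms are controlled uniformly in $n$ by $E_x\int_0^{t}(G(X_r)+\sup_{|y|\le v(x')}|F(X_r,y)|)\,dr + E_x\int_0^t dA^{|\mu|}_r$, which tends to $0$ as $t\to 0^+$ for q.e. $x$ by (H1),(H2) and right-continuity of additive functionals. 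Applying Theorem \ref{stw2.1} (after reducing to $\BB_1$-valued sequences via the multiplier $1/(1+v)$, using (P1) and Proposition \ref{stw.2} / the stability remarks to transfer the compactness property of $(\BX,\BB_1,m)$ to the relevant solid family) gives a subsequence with $u_n\to u$ $m$-a.e., hence q.e. by Lemma \ref{lm1.1}-type arguments.

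Finally I would pass to the limit. By (H3) and $|u_n|\le v$, $F_n(\cdot,u_n)\to F(\cdot,u)$ q.e.; (H2) plus the bound $|F_n(X_r,u_n(X_r))|\le \sup_{|y|\le v(X_r)}|F(X_r,y)|$, which is quasi-integrable, lets one apply the dominated convergence theorem along the trajectories to pass to the limit in $\int_t^{T\wedge\zeta}F_n(X_r,u_n(X_r))\,dr$; the measure terms converge since $A^{\mu_n}\to A^{\mu}$ uniformly on compacts in $t$ for q.e. starting point; and the martingale parts $M_n$, being differences of the other (convergent) terms, converge, with the limit a local MAF (identify $M$ via $M_t=E_x(\int_0^\zeta F(\cdot,u)\,dr+\int_0^\zeta dA^\mu\,|\,\FF_t)-u(X_0)$ and regularize by \cite[Lemma A.3.5]{Fukushima}). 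One then checks $u\in\mathbf{D}$ and property (b) from $|u|\le v$ and $v\in\mathbf{D}$, $v(X_{t\wedge\zeta})\to 0$, and quasi-continuity of $u$ as an a.e. limit of quasi-continuous functions together with the Feynman–Kac representation. I expect the main obstacle to be the passage to the limit in the nonlinear term together with the identification of the limit martingale: one must ensure the convergence $u_n\to u$ is strong enough (q.e., not merely $m$-a.e.) and that the uniform integrability needed to keep $u\in\mathbf{D}$ survives the limit — this is exactly where the compactness property (via Theorem \ref{stw2.1}) and the a priori domination by the potential $v$ do the essential work, replacing the Rellich–Kondrachov argument available only for $A=\Delta$ in \cite{Kl:AMPA}.
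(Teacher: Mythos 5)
Your overall skeleton matches the paper's: truncate $F$ and $\mu$, solve the truncated problems by a Schauder fixed-point argument (the compactness of the map $u\mapsto RF_n(\cdot,u)+R\mu_n$ being exactly where the compactness property of $(\BX,\BB_1,m)$ first enters, via the uniform bound $\|R|F_n|\|_\infty+\|R|\mu_n|\|_\infty<\infty$ which makes (M$_1$) trivial for the images), derive the a priori bound $|u_n|\le v:=RG+R|\mu|$ by Tanaka--Meyer and (H4), extract an a.e.\ convergent subsequence via (M$_1$) and Theorem \ref{stw2.1}, and pass to the limit in the BSDE. However, two of your steps would fail as written. First, your verification of (M$_1$) bounds $|p_tu_n(x)-u_n(x)|$ by $E_x\int_0^t\sup_{|y|\le v(X_r)}|F(X_r,y)|\,dr+E_x\int_0^t dA^{|\mu|}_r$ and claims this tends to $0$ by (H2). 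But (H2) is only a \emph{pathwise} quasi-integrability condition for each fixed \emph{constant} radius $r$; it gives neither integrability under $E_x$ nor any control for the variable radius $v(X_r)$, so the expectation may be infinite for every $t>0$. This is precisely why the paper localizes: it works on the finely open sets $U_k=\{v<k\}$, on which $|u_n|\le k$ so that (H2) applies with the constant $k$, and cuts further by the stopping times $\sigma_l=\inf\{t>0:\int_0^t\sup_{|y|\le l}|F|(X_r,y)\,dr>l\}$ so that the integral is bounded before expectations are taken; the compactness property is then transferred to the part process $\BX^{U_k}$ by Proposition \ref{stw.2} and Theorem \ref{stw2.1} is applied there, followed by a diagonal argument over $k$. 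Your alternative reduction ``via the multiplier $1/(1+v)$'' does not achieve this: multiplication by $1/(1+v)$ does not commute with $R_\alpha$, so the compactness property of $(\BX,\BB_1,m)$ gives no information about $\{R_\alpha u_n\}$ for $|u_n|\le v$ without a separate argument of the type of Proposition \ref{stw4.44}.

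Second, your identification of the limit martingale by $M_t=E_x\big(\int_0^\zeta F(X_r,u(X_r))\,dr+\int_0^\zeta dA^\mu_r\mid\FF_t\big)-u(X_0)$ presupposes $E_x\int_0^\zeta|F(X_r,u(X_r))|\,dr<\infty$. For systems satisfying only the sign condition this integrability fails in general --- this is the very reason the paper defines solutions through the BSDE (\ref{eq3.2}) rather than the Feynman--Kac formula (\ref{eqi.2}), and why $M$ is only a \emph{local} MAF. The paper instead obtains $M$ as a limit of the $M^n$ on the stochastic intervals $[0,\delta_k\wedge T]$, $\delta_k=\tau_{U_k}\wedge\sigma_k$, using the $L^q$, $q\in(0,1)$, a priori estimates of \cite[Lemma 6.1]{BDHPS} to get convergence of $\big(u_n(X),\int_0^\cdot F_n(X_r,u_n(X_r))\,dr,A^{\mu_n}\big)$ uniformly on compacts in probability under $P_x$ for q.e.\ $x$. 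Your pathwise dominated-convergence argument for the drift term can be salvaged after the same localization, but the closed conditional-expectation formula for $M$ cannot.
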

\begin{dow}
Step 1. We first assume that $\|R|\mu|\|_\infty<\infty$ and there
exists  a strictly positive bounded Borel measurable function $g$
such that $|F(x,y)|\le g(x)$ for $x\in E$, $y\in\BR^N$ and
$\|Rg\|_\infty<\infty$. Let $\rho$ be a strictly positive Borel
measurable function on $E$ such that $\int \rho(x)m(dx)<\infty$
and let
\[
\Phi:L_2(E;\rho\cdot m)\rightarrow L_2(E;\rho\cdot m),\quad
\Phi(u)=RF(\cdot,u)+R\mu.
\]
The mapping $\Phi$ is well defined since $|R(F(\cdot,u))+R\mu|\le
Rg+R|\mu|\in L^2(E;\rho\cdot m)$. By (H3), $\Phi$ is  continuous.
We shall show that $\Phi$ is compact. To see this, let us consider
 $\{u_n\}\subset L^2(E;\rho\cdot m)$. By Remark \ref{uw.fkf}, the  function
$v_n=\Phi(u_n)$ is a probabilistic solution of the system
\[
-Av_n=F(x,u_n)+\mu.
\]
Therefore there is a MAF $M^n$ of $\BX$ such that
\begin{align}
\label{eq.a1}
v_n(X_t)&=v_n(X_{T\wedge\zeta})+\int_t^{T\wedge\zeta}F(X_r,u_n(X_r))\,dr+
\int_t^{T\wedge\zeta}dA_r^\mu\nonumber\\
&\quad-\int_t^{T\wedge\zeta}dM_r^n, \quad t\in[0,\zeta\wedge
T],\quad P_x\mbox{-a.s.}
\end{align}
for q.e. $x\in E$. Hence
\[|v_n(x)-p_tv_n(x)|=|v_n(x)-E_xv_n(X_{t\wedge\zeta})|\le
E_x\int_0^{t\wedge\zeta}g(X_r)\,dr+
E_x\int_0^{t\wedge\zeta}dA_r^{|\mu|}
\]
for q.e. $x\in E$. Consequently,
\[
\lim_{t\rightarrow 0^+}\sup_n|p_tv_n(x)-v_n(x)|=0
\]
for q.e. $x\in E$. Observe that $\|v_n\|_{\infty}\le
\|Rg\|_{\infty}+\|R|\mu|\|_{\infty}$. Since
$(\BX,\mathcal{B}_1,m)$ has the compactness property, it follows
from Corollary \ref{wn2.7} that there is a subsequence
$(n')\subset(n)$ such that $\{v_{n'}\}$ converges q.e. Since
$\{v_{n'}\}$ are uniformly bounded by
$\|Rg\|_\infty+\|R\mu\|_\infty$, applying the Lebesgue dominated
convergence theorem shows that $\{v_{n'}\}$ converges in
$L^2(E;\rho\cdot m)$. By Schauder's fixed point theorem, there is
$u\in L^2(E;\rho\cdot m)$ such that $\Phi(u)=u$, i.e.
\[
u(x)=E_x\int_0^\zeta F(X_r,u(X_r))\,dr+E_x\int_0^\zeta dA_r^\mu
\]
for $m$-a.e. $x\in E$. Let $v(x)$  be equal to the right-hand side
of the above equality for $x\in E$  such that
$Rg(x)+R|\mu|(x)<\infty$ and zero otherwise. Then by
\cite[Lemma 4.2]{KR:JFA}, $v$ is quasi-continuous and $v\in\mathbf{D}$. Since
$v=u$, $m$-a.e., we have
\[
E_x\int_0^\zeta F(X_r,u(X_r))\,dr =E_x\int_0^\zeta
F(X_r,v(X_r))\,dr
\]
for q.e. $x\in E$. Thus $v$ is a solution of  (\ref{eq3.1}) (see Remark \ref{uw.fkf}).
\smallskip\\
Step 2. Now we consider the general case. Let $g$ be a strictly
positive bounded Borel measurable function on  $E$ such that
$\|Rg\|_\infty<\infty$ (for the existence of $g$ see
\cite[Corollary 1.3.6]{Oshima}) and let $\{F_n\}$ be a generalized
nest such that $\|R|\mu_n|\|_\infty$, where
$\mu_n=\mathbf{1}_{F_n}\cdot\mu$. Put
\[
F_n(x,y)=\frac{ng(x)}{1+ng(x)}\cdot\frac{n\cdot
F(x,y)}{|F(x,y)|\vee n}, \quad x\in E,\,y\in\BR^N.
\]
Then  $F_n$ satisfies (H2)--(H4) and $R|F_n|\le n^2Rg$, which
implies that $\|R|F_n|\|_\infty<\infty$. By Step 1, for each $n\ge
1$ there exists a solution $u_n$ of the system
\[-Au_n=F_n(x,u_n)+\mu_n.
\]
Therefore there is a MAF $M$ of $\BX$ such that
\begin{align}
\label{eq3.3} \nonumber u_n(X_t)=&u_n(X_{T\wedge\zeta})
+\int_t^{T\wedge\zeta}F_n(X_r,u_n(X_r))\,dr
+\int_t^{T\wedge\zeta}dA_r^{\mu_n}\\
&-\int_t^{T\wedge\zeta}dM_r^n, \quad t\in[0,T\wedge\zeta],\quad
P_x\mbox{-a.s.}
\end{align}
for q.e. $x\in E$. By the Tanaka-Meyer formula,
\begin{align*}
|u_n(X_t)|&\le|u_n(X_{T\wedge\zeta})|
-\int_t^{T\wedge\zeta}\langle\hat{\mbox{sgn}}(u_n(X)_{r-}),
F_n(X_r,u_n(X_r))\rangle\,dr\\
&+\int_t^{T\wedge\zeta}
\langle\hat{\mbox{sgn}}(u_n(X)_{r-}),dA_r^{\mu_n}\rangle
-\int_t^{T\wedge\zeta} \langle
\hat{\mbox{sgn}}(u_n(X)_{r-}),dM_r^n\rangle, \quad
t\in[0,T\wedge\zeta].
\end{align*}
By the above inequality and (H4),
\[
|u_n(x)|\le E_x|u_n(X_{T\wedge\zeta})|+E_x\int_0^{T\wedge\zeta}G(X_r)\,dr
+E_x\int_0^{T\wedge\zeta}dA_r^{|\mu|}
\]
for q.e. $x\in E$. Letting $T\rightarrow\infty$ and using the fact
that $u_n\in\mathbf{D}$ we conclude that for q.e. $x\in E$,
\begin{equation}
\label{eq3.4} |u_n(x)|\le E_x\int_0^\zeta G(X_r)\,dr
+E_x\int_0^\zeta\,dA_r^{|\mu|}.
\end{equation}
Put $v(x)=E_x\int_0^\zeta
G(X_r)\,dr+E_x\int_0^\zeta\,dA_r^{|\mu|}$ if the right-hand side
of (\ref{eq3.4}) is finite and $v(x)=0$ otherwise. By
\cite{KR:JFA}, $v$ is quasi-continuous, $v\in\mathbf{D}$ and $v$
is a probabilistic solution of the equation
\begin{equation}
\label{eq3.5}
-Av=G+|\mu|.
\end{equation}
Let $U_k=\{v<k\}$. Since $v$ is quasi-continuous, $U_k$ is finely
open. Moreover, since by (H1) and (H4) $v$ is finite,
$\bigcup^\infty_{k=1}U_k=E$ q.e. Write $\tau_k=\tau_{U_k}$. Then
\begin{equation}
\label{eq3.6}
|u_n\mathbf{1}_{U_k}(x)|\le k,\quad n\ge 1,\quad x\in U_k.
\end{equation}
By (H2),
\begin{equation}
\label{eq3.7} P_x\Big(\int_0^{T\wedge\zeta}\sup_{|y|\le k}
|F|(X_r,y)\,dr<\infty,T>0\Big)=1
\end{equation}
for every $k\ge 0$. Let
\[
\sigma_k=\inf\{t>0;\quad\int_0^t\sup_{|y|\le k}|F|(X_r,y)\,dr>k\}.
\]
By (\ref{eq3.7}), $\sigma_k\nearrow\infty$. Let
$\delta_{k,l}=\tau_k\wedge\sigma_l$. By (\ref{eq3.3}),
(\ref{eq3.6}), (\ref{eq3.7}) and the construction of
$\delta_{k,l}$ we have
\begin{align*}
|u_n(x)-E_xu_n(X_{t\wedge\delta_{k,l}\wedge\zeta})|
&\le E_x\int_0^{t\wedge\delta_{k,l}\wedge\zeta}
|F_n|(X_r,u_n(X_r))\,dr
+E_x\int_0^{t\wedge\delta_{k,l}\wedge\zeta}\,dA_r^{|\mu|}\\
&\le kE_x(t\wedge\delta_{k,l}\wedge\zeta)
+E_x\int_0^{t\wedge\delta_{k,l}\wedge\zeta}\,dA_r^{|\mu|}.
\end{align*}
Hence
\begin{equation}
\label{eq3.8} \lim_{t\rightarrow
0^+}\sup_n|u_n(x)-E_xu_n(X_{t\wedge\delta_{k,l}\wedge\zeta})|=0
\end{equation}
for q.e. $x\in E$. Now we will show that (\ref{eq3.8}) holds for $x\in U_k$ with
$E_x|u_n(X_{t\wedge\delta_{k,l}\wedge\zeta})|$ replaced by
$E_x[|u_n(X_{t\wedge\tau_k})|\mathbf{1}_{\{t<\tau_k\}}]$. To this
end, let us first observe that $P_x(\tau_k>0)=1$ for $x\in U_k$,
because $U_k$ is finely open. We have
\begin{align*}
&\sup_n|E_xu_n(X_{t\wedge\delta_{k,l}})
-E_xu_n(X_t)\mathbf{1}_{\{t<\tau_k\}}|\le
E_x\sup_n|u_n(X_{t\wedge\delta_{k,l}})
-u_n(X_t)\mathbf{1}_{\{t<\tau_k\}}|\\
&\qquad=\int_{\{t\ge\tau_k\}\cup\{t\ge\delta_{k,l}\}}
\sup_n|u_n(X_{t\wedge\delta_{k,l}})
-u_n(X_t)\mathbf{1}_{\{t<\tau_k\}}|\,dP_x\\
&\qquad\le\int_{\{t\ge\tau_k\}\cup\{t\ge\delta_{k,l}\}}
|v(X_{t\wedge\delta_{k,l}})|+|v(X_t)\mathbf{1}_{\{t<\tau_k\}}|\,dP_x.
\end{align*}
Since $\lim_{t\rightarrow0^+}\lim_{l\rightarrow\infty}
P_x(\{t\ge\tau_k\}\cup\{t\ge\delta_{k,l}\})=0$ for $x\in U_k$ and
$v\in\mathbf{D}$, it follows that for $x\in U_k$ the right-hand
side of the above inequality tends to zero as
$l\rightarrow+\infty$ and then $t\rightarrow 0^+$. This and
(\ref{eq3.8}) imply that
\begin{equation}
\label{eq3.9}
\lim_{t\rightarrow 0^+}\sup_n|u_n(x)-p_t^ku_n(x)|=0,\quad x\in U_k,
\end{equation}
where $\{p_t^k,t\ge 0\}$ is the semigroup associated with the
process $\BX^{U_k}$. By Proposition \ref{stw.2} the triple
$(\BX^{U_k},\BB_1(U_k),m)$ has the compactness property. Moreover,
$\BX^{U_k}$ is normal since $U_k$ is finely open. Therefore it
follows from  Theorem \ref{stw2.1} and (\ref{eq3.6}) that there
exists a subsequence $(n')\subset(n)$ such that
$\{u_{n'}\mathbf{1}_{U_k}\}$ is convergent q.e. By using standard
argument and the fact that $\bigcup_kU_k=E$ q.e. one can now
construct a subsequence $(m)\subset (n)$ such that $\{u_m\}$ is
convergent q.e. on $E$. Without loss of generality we may assume
that $(m)=(n)$. Let us write $u=\limsup u_n$ and
$\delta_k=\delta_{k,k}$. By (\ref{eq3.3}),
\begin{align*}
u_n(X_{t\wedge\delta_k})&=E_x(u_n(X_{T\wedge\delta_k})
+\int_{t\wedge\delta_k\wedge\zeta}^{T\wedge\delta_k\wedge\zeta}
F_n(X_r,u_n(X_r))\,dr\\
&\quad+
\int_{t\wedge\delta_k\wedge\zeta}^{T\wedge\delta_k\wedge\zeta}\,dA_r^{\mu_n}
|\FF_{t\wedge\delta_k\wedge\zeta}), \quad t\in[0,T],\quad
P_x\mbox{-a.s.},
\end{align*}
so applying  \cite[Lemma 6.1]{BDHPS}  we can conclude that for
every $q\in(0,1)$,
\begin{align*}
&E_x\sup_{t\le\delta_k\wedge T}|u_n(X_t)-u_m(X_t)|^q
\le\frac{1}{1-q}E_x\Big[|u_n(X_{\delta_k\wedge T})
-u_m(X_{\delta_k\wedge T})|^q\\
&\quad+\Big(\int_0^{T\wedge\delta_k\wedge\zeta}
|F_n(X_r,u_n(X_r))-F_m(X_r,u_m(X_r))|\,dr\Big)^q
+\Big(\int_0^{T\wedge\delta_k\wedge\zeta}dA_r^{|\mu_n-\mu_m|}\Big)^q
\Big].
\end{align*}
Applying the Lebesgue dominated convergence theorem and using
(H3), the construction of $F_n,\{\delta_k\}$ and the convergence
of $\{u_n\}$ we conclude that for q.e. $x\in E$ the first and
second term on the right-hand side of the above inequality
converges to zero as $n,m\rightarrow\infty$. To show the
convergence of the third term, let us  observe that
\[
A_t^{|\mu_n-\mu_m|}=\int_0^t\mathbf{1}_{F_n\Delta
F_m}(X_r)\,dA_r^\mu,\quad t\ge0.
\]
Since $E_x\int_0^{\zeta}dA_r^{|\mu|}<\infty$ q.e., it is enough to
show that
\[
\lim_{n,m\rightarrow\infty}P_x(\exists_{t>0}X_t\in F_n\Delta
F_m)=\lim_{n,m\rightarrow\infty}P_x(\sigma_{F_n\Delta F_m}<\infty)=0
\]
for q.e. $x\in E$. But this follows immediately from the fact that
$\{F_n\}$ is a nest (see \cite[Theorem 3.4.8]{Oshima}). By what has already
been proved,
\[
\big(u_n(X),\int_0^\cdot F_n(X_r,u_n(X_r))\,dr,
A^{\mu_n}\big)\rightarrow \big(u(X),\int_0^\cdot
F(X_r,u(X_r))\,dr,A^{\mu}\big),
\]
uniformly on compacts in probability $P_x$ for q.e. $x\in E$.
Therefore letting $n\rightarrow\infty$ in (\ref{eq3.3}) we see
that there exists a local MAF $M$ of $\BX$ such that (\ref{eq3.2})
is satisfied for q.e. $x\in E$. The fact that $u\in \mathbf{D}$
and $u$ satisfies condition (b) of the definition of a
probabilistic solution of (\ref{eq3.1}) follows from (\ref{eq3.4})
and (\ref{eq3.5}).
\end{dow}

\nsubsection{Systems with operators generated by right Markov processes}
\label{sec4}

In the present section we assume that $\BX$ is a general transient
right Markov process on $E$ satisfying hypothesis (L) of Meyer.

Let us fix an excessive ($\sigma$-finite) measure $m$ on $E$, i.e.
a Borel measure on $\BB(E)$ such that
\[
m\circ\alpha R_{\alpha}\le m,
\]
where $(m\circ\alpha R_{\alpha})f=m(\alpha R_{\alpha}f)=\int
f(x)\,m(dx)$ for $f\in\BB^+(E)$.

We say that a set $B\subset E$ is $m$-polar if there exists an
excessive function $v$ such that $A\subset\{v=\infty\}$ and $v$ is
finite $m$-a.e.

In this section we say that a property holds q.e. if it holds
except for some $m$-polar set.

Recall that a set $N\in\BB^n(E)$ is $m$-inessential if it is
$m$-polar and absorbing for $\BX$.

\begin{df}
An $\FF$-adapted increasing $[0,\infty]$-valued process
$\{A_{t},t\ge 0\}$ is called positive co-natural additive
functional (PcNAF) of $\BX$ if there exist a defining set
$\Omega_A\subset\FF_\infty$ and an $m$-inessential Borel set
$N_A\subset E$ such that
\begin{enumerate}
\item[(a)] $P_x(\Omega_A)=1$ for $x\notin N_A$ and
$\theta_t\Omega_A\subset\Omega_A$, $t\ge 0$,
\item[(b)] for every $\omega\in\Omega_A$ the mapping
$t\mapsto A_t(\omega)$ is right continuous on $[0,\infty)$ and
finite valued on $[0,\zeta)$ with $A_0(\omega)=0$,
\item[(c)] for every $\omega\in\Omega_A$ and $t>0$,
$\Delta A_t\equiv A_t-A_{t-}=a(X_t)$, where $a\in p\BB^n(E)$,
\item[(d)] for every
$w\in\Omega_A$, $A_{t+s}(\omega)=A_t(\omega)+A_s(\theta_t\omega)$
for all $s,t\ge 0$.
\end{enumerate}
\end{df}

\begin{uw}
\label{uw.0} It is known (see \cite[Proposition 6.12]{GS}) that
for any $m$-polar set $N$ there exists a Borel $m$-inessential set
$B$ such that $N\subset B$. Therefore if some property holds q.e.
then without loss of generality we may assume that it holds
everywhere except for possibly an $m$-inessential set.
\end{uw}

Given a PcNAF $A$ and $f\in\BB^+(E)$ set
\[
U_Af(x)=E_x\int_0^\zeta f(X_r)\,dA_r,\quad x\in E.
\]
By $\mu_A$ we denote the Revuz measure associated with $A$, i.e.
the measure defined as
\[
\mu_A(f)=\sup\{\nu\circ U_Af;\quad\nu\circ U\le m\}.
\]

In this section by a nest we understand  an increasing sequence
$\{B_n\}$ of nearly Borel sets such that
$P_m(\lim_{n\rightarrow\infty}\tau_{B_n}<\zeta)=0$.

\begin{df}
A Borel measure $\mu$ on $E$ is called smooth if it charges no
$m$-polar sets and there exists a nest $\{G_n\}$ of finely open
nearly Borel sets such that $\mu(G_n)<\infty$, $n\ge 1$.
\end{df}

It is known (see \cite[Therems 6.15, 6.21, 6.29]{FG}) that for
every PcNAF $A$ its Revuz measure $\mu_A$ is smooth and for every
smooth measure $\mu$ there exists a unique PcNAF $A^\mu$ such that
its Revuz measure is equal to $\mu$.

\begin{lm}
\label{lm4.1} Let $\mu$ be a positive smooth measure. Then the
function $u$ defined as
\[
u(x)=E_x\int_0^\zeta\,dA^\mu,\quad x\in E
\]
is finely continuous and if $u<\infty$, $m$-a.e. then $u<\infty$
q.e.
\end{lm}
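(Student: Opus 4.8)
The plan is to identify $u$ with an excessive function, which gives fine continuity at once, and then to obtain the finiteness assertion directly from the definition of an $m$-polar set. For $\alpha>0$ put $u_\alpha(x)=E_x\int_0^\zeta e^{-\alpha r}\,dA^\mu_r$. Each $u_\alpha$ is the $\alpha$-potential of the PcNAF $A^\mu$, hence $\alpha$-excessive and in particular nearly Borel measurable, by the standard theory of additive functionals of right processes (cf. \cite{FG,GS}); and $u_\alpha\uparrow u$ as $\alpha\downarrow 0$ by monotone convergence, so $u$ is nearly Borel.

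Next I would check that $u$ is excessive by a direct computation of $p_tu$. Using the additivity of $A^\mu$ (property (d) of a PcNAF) in the form $A^\mu_\zeta=A^\mu_t+A^\mu_\zeta\circ\theta_t$ on $\{t<\zeta\}$, the convention that $A^\mu$ is frozen once the process reaches $\Delta$, and the Markov property, one gets for every $x\in E$
\[
p_tu(x)=E_x\big[(A^\mu_\zeta-A^\mu_t)\mathbf{1}_{\{t<\zeta\}}\big],
\qquad
u(x)-p_tu(x)=E_x\big[A^\mu_\zeta\mathbf{1}_{\{t\ge\zeta\}}\big]
+E_x\big[A^\mu_t\mathbf{1}_{\{t<\zeta\}}\big].
\]
The right-hand side of the second identity is nonnegative, so $p_tu\le u$; and since $A^\mu$ is increasing with $A^\mu_0=0$, on $\{\zeta>0\}$ the integrand $(A^\mu_\zeta-A^\mu_t)\mathbf{1}_{\{t<\zeta\}}$ increases to $A^\mu_\zeta$ as $t\downarrow0$ (on $\{\zeta=0\}$ both $u$ and $p_tu$ receive no contribution), so $p_tu\uparrow u$ by monotone convergence. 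Hence $u$ is excessive, and therefore finely continuous.

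Finally, if $u<\infty$ $m$-a.e., then $v:=u$ is an excessive function that is finite $m$-a.e. and satisfies $\{u=\infty\}\subset\{v=\infty\}$, so by the definition of an $m$-polar set the set $\{u=\infty\}$ is $m$-polar; that is, $u<\infty$ q.e. The step needing care is the verification that $u$, read off pointwise from the formula, genuinely is (a version of) an excessive function: one must account for the $m$-inessential set $N_{A^\mu}$ outside which $A^\mu$ is a bona fide additive functional, for the points where $P_x(\zeta=0)>0$, and for measurability — all of which is covered by the cited results on potentials of PcNAFs, while modification of $u$ on the $m$-inessential (hence $m$-polar) set $N_{A^\mu}$ changes neither ``$m$-a.e.'' nor ``q.e.'' statements.
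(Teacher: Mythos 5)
Your proof is correct, but it reaches the second assertion by a genuinely different route than the paper. For fine continuity the paper simply cites Sharpe (Theorems 36.10 and 49.9) on potentials of homogeneous random measures, whereas you verify excessivity of $u$ by hand via the identity $p_tu(x)=E_x\bigl[(A^\mu_\zeta-A^\mu_t)\mathbf{1}_{\{t<\zeta\}}\bigr]$ and monotone convergence as $t\downarrow 0$; both are legitimate, and your computation is the content of the cited results in this special case. The real divergence is in the finiteness statement: the paper sets $F=\{u=\infty\}$, uses fine closedness of $F$ and the strong Markov property at $\sigma_F$ to show $P_m(\sigma_F<\infty)=0$ (the key point being that $E_x\bigl(\int_0^\zeta dA^\mu_r\,\big|\,\FF_{\sigma_F\wedge\zeta}\bigr)$ is $P_x$-a.s. finite for $m$-a.e.\ $x$ because $u(x)=E_xA^\mu_\zeta<\infty$ there), and then invokes \cite[Corollary 1.8.6]{BB} to convert non-hitting under $P_m$ into $m$-polarity. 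You instead observe that, with the definition of $m$-polarity adopted in Section 4 (existence of an excessive function, finite $m$-a.e., that is infinite on the set), the excessive function $u$ itself witnesses the $m$-polarity of $\{u=\infty\}$; once excessivity is established this is a one-line deduction and is perfectly valid. What the paper's longer argument buys is independence from that particular definition: it establishes the hitting-probability formulation directly, which is the form actually used elsewhere. Your caveats about the $m$-inessential set $N_{A^\mu}$ and about measurability are the right ones to flag, and your resolution (the exceptional set is absorbing and $m$-polar, so it perturbs neither the $m$-a.e.\ hypothesis nor the q.e.\ conclusion) is adequate.
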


\begin{dow}
That $u$ is finely continuous follows from \cite[Theorems 36.10, 49.9]{Sharpe}. Since
$u$ is finely continuous, $F=\{u=\infty\}$ is finely closed.
Therefore
\begin{align*}
P_m(\sigma_F<\infty)=P_m(\sigma_F<\infty,X_{\sigma_F}\in F)
&=P_m(\sigma_F<\infty,u(X_{\sigma_F})=\infty)\\
&= P_m\Big(\sigma_F<\infty, E_x\big(\int_{\sigma_E}^\infty
dA_r^\mu|\FF_{\sigma_F}\big)\Big)\\
&\le P_m\Big(\sigma_F<\infty, E_x\big(\int_0^\zeta
dA_r^\mu|\FF_{\sigma_F\wedge\zeta}\big)=\infty\Big)\\
&\le P_m\Big(E_x\big(\int_0^\zeta
dA_r^\mu|\FF_{\sigma_F\wedge\zeta}\big)=\infty\Big)=0,
\end{align*}
which when combined with \cite[Corollary 1.8.6]{BB} implies that
$F$ is $m$-polar.
\end{dow}

\begin{stw}
\label{stw4.44} $(\BX,\BB_1,m)$ has the compactness property iff
$(\BX,[0,v],m)$ has the compactness property for every
$v\in\mathbf{D}$.
\end{stw}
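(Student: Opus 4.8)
The plan is to prove the two implications separately. The implication ``$\Leftarrow$'' is essentially a tautology, whereas ``$\Rightarrow$'' is a truncation argument that reduces the general family $[0,v]$ to the model family $\BB_1$.

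For ``$\Leftarrow$'' I would observe that the constant function $\mathbf{1}$ belongs to $\mathbf{D}$: for every $\tau\in\mathcal{T}$ one has $\mathbf{1}(X_\tau)\le 1$ (indeed $\mathbf{1}(X_\tau)=0$ on $\{\tau\ge\zeta\}$ by the convention $\mathbf{1}(\Delta)=0$), so the family $\{\mathbf{1}(X_\tau),\tau\in\mathcal{T}\}$ is uniformly integrable under every $P_x$. Since $\BB_1=[0,\mathbf{1}]$, taking $v=\mathbf{1}$ in the hypothesis gives at once that $(\BX,\BB_1,m)$ has the compactness property.

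For ``$\Rightarrow$'', fix $v\in\mathbf{D}$ and $\{u_n\}\subset[0,v]$, and decompose $u_n=(u_n\wedge k)+(u_n-k)^+$ for each $k\ge1$. The bounded parts $u_n\wedge k$, divided by $k$, lie in $\BB_1$, so the compactness property of $(\BX,\BB_1,m)$ applies to them. For the tails I would use $v\in\mathbf{D}$: for $m$-a.e.\ $x$ the family $\{v(X_\tau),\tau\in\mathcal{T}\}$ is $P_x$-uniformly integrable, hence $\varepsilon_k(x):=\sup_{\tau\in\mathcal{T}}E_x[v(X_\tau);v(X_\tau)>k]\to0$ as $k\to\infty$ for $m$-a.e.\ $x$. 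Since $0\le u_n\le v$ we have $(u_n-k)^+\le(v-k)^+$, and, a deterministic time being a stopping time, $E_x(v-k)^+(X_t)\le E_x[v(X_t);v(X_t)>k]\le\varepsilon_k(x)$; integrating against $e^{-\alpha t}\,dt$ yields the $n$-free bound
\[
0\le R_\alpha\big((u_n-k)^+\big)\le R_\alpha\big((v-k)^+\big)\le\varepsilon_k/\alpha\quad m\text{-a.e.},
\]
which vanishes as $k\to\infty$. Next, since $m$ is excessive, the Remark following Proposition \ref{stw2.2} lets me fix a countable $\Lambda\subset(0,\infty)$ with $\sup\Lambda=+\infty$ such that $R_\alpha$ carries $\BB_1$ into a relatively compact set in the topology of $m$-a.e.\ convergence for every $\alpha\in\Lambda$. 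A diagonal argument over the countable set $\BN\times\Lambda$ then produces a subsequence $(n')\subset(n)$ along which $\{R_\alpha(u_{n'}\wedge k)\}$ converges $m$-a.e.\ to an $m$-a.e.\ finite limit for every $(k,\alpha)$. For fixed $\alpha\in\Lambda$ and any $k$,
\[
|R_\alpha u_{n'}-R_\alpha u_{l'}|\le|R_\alpha(u_{n'}\wedge k)-R_\alpha(u_{l'}\wedge k)|+R_\alpha\big((u_{n'}-k)^+\big)+R_\alpha\big((u_{l'}-k)^+\big),
\]
so letting $n',l'\to\infty$ gives $\limsup_{n',l'}|R_\alpha u_{n'}-R_\alpha u_{l'}|\le2\varepsilon_k/\alpha$ $m$-a.e., and then $k\to\infty$ shows $\{R_\alpha u_{n'}\}$ is $m$-a.e.\ Cauchy, hence $m$-a.e.\ convergent; the inequality $R_\alpha u_{n'}\le R_\alpha(u_{n'}\wedge k)+\varepsilon_k/\alpha$ shows in addition that its limit is $m$-a.e.\ finite. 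Since this holds for every $\alpha\in\Lambda$ and $\sup\Lambda=+\infty$, the triple $(\BX,[0,v],m)$ has the compactness property.

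The step I expect to be the main obstacle is the tail estimate, i.e.\ converting uniform integrability of $\{v(X_\tau)\}$ into the resolvent bound $R_\alpha((v-k)^+)\le\varepsilon_k/\alpha$; the point is that deterministic times must be admitted as competitors in the supremum defining $\varepsilon_k$, so that the bound transfers to $E_x(v-k)^+(X_t)$ for every $t$ and hence to $R_\alpha((v-k)^+)$. A secondary technical need --- a single cofinal family $\Lambda$ of parameters $\alpha$ serving all truncation levels $k$ simultaneously --- is exactly what the excessive-measure Remark supplies; without it the naive iteration over $k$ would only leave parameter sets $\Lambda_k$ whose intersection need not be cofinal in $(0,\infty)$.
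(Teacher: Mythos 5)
Your proof is correct and follows essentially the same route as the paper's: truncate $u_n$ at level $k$, apply the $\BB_1$-compactness to the truncations, and remove the tail uniformly in $n$ using $v\in\mathbf{D}$. The only difference is technical rather than conceptual: the paper bounds the truncation error in $L^1(E;g\cdot m)$ for a weight $g$ with $\int (R_\alpha v)g\,dm<\infty$ and passes to an $m$-a.e.\ convergent subsequence, while you bound it pointwise $m$-a.e.\ by $\varepsilon_k(x)/\alpha$ via the uniform integrability in the definition of $\mathbf{D}$ (and, helpfully, you make explicit the cofinal choice of $\Lambda$ via the excessive-measure remark, a point the paper leaves implicit).
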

\begin{dow}
Sufficiency is obvious. To prove necessity, let us assume that
$(\BX,\BB_1,m)$ has the compactness property and for $v\in
\mathbf{D}$ let us choose $\{u_n\}\subset\mathcal{B}^+(E)$ such
that $u_n\le v$, $m$-a.e. for $n\ge 1$. Write $u^k_n=T_k(u_n)$.
Since $v\in \mathbf{D}$, $R_\alpha v$ is finite $m$-a.e.  for
every $\alpha>0$. Let $g$ be a strictly positive Borel measurable
function on $E$ such that $\int(R_\alpha v)g\,dm<\infty$. By the
assumption, for every $k\ge 0$ there exists a subsequence
$(n')\subset (n)$ such that $\{R_\alpha u^k_{n'}\}$ is convergent
in $L^1(E; g\cdot m)$. Therefore to show the existence of a
subsequence $(m)\subset (n)$ such that $R_\alpha u_m$  converges
in $L^1(E; g\cdot m)$ it is enough to prove that $\|R_\alpha
u^k_n-R_\alpha u_n\|_{L^1(E; g\cdot m)}\le C(k)$ for some
independent of $n$ constants $C(k)$ such that $C(k)\rightarrow 0$
as $k\rightarrow +\infty$. Observe that
\begin{align*}
\|R_\alpha u^k_n-R_\alpha u_n\|_{L^1(E; g\cdot m)}
&\le E_{g\cdot m}\int_0^{\infty} e^{-\alpha t}|u_n(X_t)-u^k_n(X_t)|\,dt\\
&\le E_{g\cdot m}\int_0^{\infty}
e^{-\alpha t}\mathbf{1}_{\{v(X_t)>k\}}v(X_t)\,dt\equiv C(k).
\end{align*}
Since  $\int(R_\alpha v)g\,dm<\infty$, $C(k)\rightarrow 0$ as
$k\rightarrow \infty$.
\end{dow}

\begin{stw}
Let $\BX$ be a right Markov process and $m$ be an excessive
measure. Then $(\BX,\BB_1,m)$ has the compactness property iff
$R_\alpha:L^1(E;m)\rightarrow L^1(E;m)$ is order compact for some
(and hence for every) $\alpha>0$.
\end{stw}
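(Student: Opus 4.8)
The plan is to pass back and forth between $R_\alpha$ acting on the solid family $\BB_1\subset\BB^{+}(E)$, where the relevant mode of convergence is $m$-a.e.\ convergence, and $R_\alpha$ acting on order intervals $[0,v]$ with $v\in L^1(E;m)$. The bridge in both directions is the resolvent identity together with the fact that, $m$ being excessive, $\int_E R_\alpha v\,dm\le\alpha^{-1}\int_E v\,dm$ for $v\in\BB^{+}(E)$, so that $R_\alpha$ is a well-defined positive operator on $L^1(E;m)$ and $R_\alpha v\in L^1(E;m)$ whenever $v\in L^1(E;m)$. I would also use the Remark above, which for a Markov process with excessive measure shows that $(\BX,\BB_1,m)$ has the compactness property precisely when $R_\alpha$ carries $\BB_1$ into a relatively compact set in the topology of $m$-a.e.\ convergence for one (hence every larger) $\alpha>0$; and I would first record that order compactness of $R_\alpha$ is independent of $\alpha$: if $R_{\alpha_0}$ is order compact, $v\in L^1(E;m)$ is positive and $\{u_n\}\subset[0,v]$, then $R_\alpha u_n\in[0,R_\alpha v]$ with $R_\alpha v\in L^1(E;m)$, so $R_\alpha u_n=R_{\alpha_0}u_n+(\alpha_0-\alpha)R_{\alpha_0}(R_\alpha u_n)$ exhibits $R_\alpha u_n$ as a sum of two sequences each relatively compact in $L^1(E;m)$, whence $\{R_\alpha u_n\}$ has a convergent subsequence. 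This yields the ``and hence for every $\alpha$'' clause.

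For necessity, suppose $(\BX,\BB_1,m)$ has the compactness property and fix $\alpha>0$ for which $R_\alpha$ carries $\BB_1$ into a relatively $m$-a.e.-compact set. Given positive $v\in L^1(E;m)$ and $\{u_n\}\subset[0,v]$ (working with Borel representatives), for each $k\ge1$ the functions $(u_n\wedge k)/k$ lie in $\BB_1$, so a diagonal extraction produces a subsequence, still written $\{u_n\}$, along which $\{R_\alpha(u_n\wedge k)\}_n$ converges $m$-a.e.\ for every $k$; since $R_\alpha(u_n\wedge k)\le R_\alpha v\in L^1(E;m)$, dominated convergence upgrades this to convergence in $L^1(E;m)$. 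Finally $\|R_\alpha u_n-R_\alpha(u_n\wedge k)\|_{L^1(E;m)}=\|R_\alpha((u_n-k)^+)\|_{L^1(E;m)}\le\alpha^{-1}\|(v-k)^+\|_{L^1(E;m)}\to0$ as $k\to\infty$ uniformly in $n$, so $\{R_\alpha u_n\}$ is Cauchy, hence convergent, in $L^1(E;m)$. Thus $R_\alpha$, and by the first paragraph every $R_\beta$, is order compact.

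For sufficiency, assume $R_\alpha:L^1(E;m)\to L^1(E;m)$ is order compact, fix $\beta\in(0,\alpha)$ and $\{u_n\}\subset\BB_1$, and put $w_n:=u_n+(\alpha-\beta)R_\beta u_n\in\BB^{+}(E)$, so that the resolvent identity gives $R_\beta u_n=R_\alpha w_n$ while $0\le w_n\le\alpha/\beta$. Choose a strictly positive bounded $g\in L^1(E;m)$ (possible since $m$ is $\sigma$-finite) and Borel sets $C_j\uparrow E$ with $m(C_j)<\infty$. For fixed $j$ the functions $w_n\mathbf{1}_{C_j}$ lie in the order interval $[0,(\alpha/\beta)\mathbf{1}_{C_j}]\subset L^1(E;m)$, so $\{R_\alpha(w_n\mathbf{1}_{C_j})\}_n$ is relatively compact in $L^1(E;m)$ and hence in $L^1(E;g\cdot m)$, while $\|R_\alpha(w_n\mathbf{1}_{C_j^{c}})\|_{L^1(E;g\cdot m)}\le(\alpha/\beta)\int_E R_\alpha\mathbf{1}_{C_j^{c}}\,g\,dm$, which tends to $0$ as $j\to\infty$ uniformly in $n$ by dominated convergence (here $R_\alpha\mathbf{1}_{C_j^{c}}\downarrow0$ pointwise, dominated by $\alpha^{-1}$). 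Hence $\{R_\beta u_n\}_n$ is totally bounded in the Banach space $L^1(E;g\cdot m)$, so some subsequence converges there and, after a further extraction, $m$-a.e.; its limit is majorized by $R_\beta 1\le\beta^{-1}$, hence $m$-a.e.\ finite. Thus $R_\beta$ carries $\BB_1$ into a relatively $m$-a.e.-compact set, and the Remark gives the compactness property of $(\BX,\BB_1,m)$.

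The measurability bookkeeping and the choice of Borel representatives are routine. The point I expect to require care is the globalization in the sufficiency direction: order compactness only controls $R_\alpha$ on order intervals dominated by $L^1(E;m)$ functions, whereas $\BB_1$ is in general not contained in $L^1(E;m)$; the device that resolves this is to rewrite $R_\beta u_n$ as $R_\alpha w_n$ with $w_n$ bounded and then split $w_n$ along a $\sigma$-finite exhaustion, the tail being controlled uniformly in $n$ because $R_\alpha 1\le\alpha^{-1}$.
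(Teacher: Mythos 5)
Your argument is correct and follows essentially the same route as the paper: for necessity you truncate $u_n$ at level $k$, apply the compactness property to the truncations, and combine domination by $R_\alpha v\in L^1(E;m)$ with a tail estimate uniform in $n$; for sufficiency you cut the bounded functions down to order intervals along a $\sigma$-finite exhaustion, control the tail uniformly via $R_\alpha 1\le\alpha^{-1}$, and pass to a weighted space $L^1(E;g\cdot m)$ with $g>0$ integrable, exactly as the paper does with $u_n g_k$, $g_k\nearrow 1$, $g_k\in L^1(E;m)$. The only real deviation is your detour through the resolvent identity (writing $R_\beta u_n=R_\alpha w_n$), which is superfluous since $u_n\le 1$ is already bounded and can be split along the exhaustion directly.
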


\begin{dow}
Necessity. Assume that $(\BX,\BB_1(E),m)$ has the compactness
property. Let $v\in L^1(E;m)$ and $\{u_n\}\subset[0,v]$. Write
$v_n^k=R_\alpha u_n^k$, $u_n^k=T_k(u_n)$ for $n,k\ge 1$.  By the
compactness property of $(\BX,\BB_1(E),m)$ there exists a
subsequence (still denoted by $(n)$) such that $v_n^k$ is $m$-a.e.
convergent. Since $v_n^k\le R_\alpha v\in L^1(E;m)$, $v_n^k$ is
convergent in $L^1(E;m)$. Furthermore,
\begin{align*}
\|v_n^k-v_n\|_{L^1(E;m)}\le\int_ER_\alpha|T_k(u_n)-u_n|\,dm
&\le\frac 1 \alpha \int_E|T_k(u_n)-u_n|\,dm\\
&\le\frac 2 \alpha \int_{\{v>k\}} v\,dm\equiv C(k).
\end{align*}
Since $C(k)\rightarrow0$ as $k\rightarrow+\infty$, there exists a
subsequence $(n')\subset(n)$ such that $\{v_{n'}\}$ is convergent
in $L^1(E;m)$.

Sufficiency. Now assume that $R_\alpha:L^1(E;m)\rightarrow
L^1(E;m)$ is order compact. Let $\{u_n\}\subset\mathcal{B}^+(E)$
be such that $u_n(x)\le 1$ for $x\in E,n\ge 1$. Let $\{g_k\}$ be a
sequence of positive functions in $L^1(E;m)$ such that
$g_k\nearrow 1$ and let $\rho$ be a strictly positive function in
$L^1(E;m)$. Write $u^k_n=u_n g_k$, $v_n^k=R_\alpha u^k_n$,
$v_n=R_\alpha u_n$. By the assumption, for every $k\ge1$ there
exists a subsequence (still denoted by $n$) such that $v^k_n$
converges in $L^1(E;m)$. It follows that for every $k\ge1$ there
exists a subsequence (still denoted by $n$) such that $v_n^k$
converges in $L^1(E; \rho\cdot m)$. This when combined with the
fact that
\[
\lim_{k\rightarrow\infty}\|v^k_n-v_n\|_{L^1(E;\rho\cdot m)}\le
\lim_{k\rightarrow\infty} \int_E R_\alpha |1-g_k|\rho\,dm=0
\]
implies the existence of a subsequence $(n')\subset (n)$ such that
$v_{n'}$ converges in $L^1(E;\rho\cdot m)$. Therefore there is a
further subsequence $(n'')\subset (n')$ such that $v_{n''}$
converges $m$-a.e.
\end{dow}
\medskip

Let us consider the following system
\begin{equation}
\label{eq4.1} -Au=F(x,u)+\mu,
\end{equation}
where $(A,(D(A))$ is the operator defined by
\begin{equation}
\label{eq4.2} D(A)=R_\alpha(L^1(E;m)),\quad-A(R_\alpha f)=f-\alpha
R_\alpha f,\quad f\in L^1(E;m).
\end{equation}
for some $\alpha>0$. Since $m$ is an excessive measure,
\[
\int_E\alpha R_\alpha f\,dm\le\int_Ef\,dm,\quad f\in\BB^+(E),
\]
from which it follows immediately that $R_\alpha f=0$, $m$-a.e.,  if $f=0$, $m$-a.e. Therefore (\ref{eq4.2}) makes sense. Also
note that by the resolvent equation the definition of $(A,D(A))$
is independent of $\alpha>0$.

\begin{stw}
\label{uw.1} Let $B\in\BB(E)$. If $(\BX,\BB_1(E))$ has the
compactness property then $(\BX^B,\BB_1(E))$ has the
compactness property.
\end{stw}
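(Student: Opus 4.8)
The plan is to reduce the statement to Proposition \ref{stw2.2} via reference measures; this is natural because the test family here is $\BB_1$ and $m$ is the counting measure, so ``$m$-a.e.'' just means ``everywhere''. First I would use the hypothesis: since $\BX$ is a right Markov process and $(\BX,\BB_1(E))$ has the compactness property, the ``only if'' direction of Proposition \ref{stw2.2} gives that $\BX$ satisfies Meyer's hypothesis (L). Fix a $\sigma$-finite reference measure $m$ and, for each $\alpha>0$, a density $r_\alpha:E\times E\to\BR_+$ with $R_\alpha(x,dy)=r_\alpha(x,y)\,m(dy)$ for every $x\in E$; as in the proof of Proposition \ref{stw2.2}, $R_\alpha1(x)<\infty$ for every $x$, so $r_\alpha(x,\cdot)\in L^1(E;m)$.

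Next I would transfer the reference measure to the killed process. Writing $\{R^B_\alpha,\alpha>0\}$ for the resolvent of $\BX^B$, one has
\[
R^B_\alpha u(x)=E_x\int_0^{D_{E\setminus B}}e^{-\alpha t}u(X_t)\,dt\le R_\alpha u(x),\qquad u\in\BB^+(E),\ x\in E,
\]
and also $R^B_\alpha u=R^B_\alpha(\mathbf{1}_Bu)$ since $X_t\in B$ for $t<D_{E\setminus B}$. The inequality forces $R^B_\alpha(x,\cdot)\ll R_\alpha(x,\cdot)\ll m$, so $m$ is a reference measure for $\BX^B$ as well, with densities $0\le r^B_\alpha\le r_\alpha$; in particular $r^B_\alpha(x,\cdot)\in L^1(E;m)$ for every $x\in E$.

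From here I would present the self-contained conclusion, mirroring the first half of the proof of Proposition \ref{stw2.2}. Given $\{u_n\}\subset\BB_1(E)$, since $m$ is $\sigma$-finite and $0\le u_n\le1$ the sequence is bounded in $L^\infty(E;m)$, so a subsequence $u_{n'}$ converges weakly$^*$ in $L^\infty(E;m)$ to some $w$ with $0\le w\le1$ $m$-a.e.; then, because $r^B_\alpha(x,\cdot)\in L^1(E;m)$,
\[
R^B_\alpha u_{n'}(x)=\int_Er^B_\alpha(x,y)u_{n'}(y)\,m(dy)\longrightarrow\int_Er^B_\alpha(x,y)w(y)\,m(dy)=R^B_\alpha w(x)
\]
for every $x\in E$ and every $\alpha>0$, the limit being finite since $R^B_\alpha w(x)\le R^B_\alpha1(x)\le R_\alpha1(x)<\infty$. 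As the single subsequence $(n')$ works simultaneously for all $\alpha>0$, taking $\Lambda=(0,\infty)$ shows that $(\BX^B,\BB_1(E))$ has the compactness property. A quicker alternative is to note that $\BX^B$ is a right Markov process on the (Radon metrizable) set $B$, so the computation above says exactly that $\BX^B$ satisfies Meyer's hypothesis (L), and then the ``if'' direction of Proposition \ref{stw2.2}, applied to $\BX^B$ with the identity $R^B_\alpha u=R^B_\alpha(\mathbf{1}_Bu)$ in mind, yields the claim at once.

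I do not expect a real obstacle here. The only points needing a little care are matching the test family $\BB_1(E)$ with the natural domain of $R^B_\alpha$ — handled by $R^B_\alpha u=R^B_\alpha(\mathbf{1}_Bu)$ — and using that $m$ is $\sigma$-finite so that the weak$^*$ sequential compactness of the unit ball of $L^\infty(E;m)$ is available; both are exactly as in the proof of Proposition \ref{stw2.2}.
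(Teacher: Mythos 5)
Your proof is correct, but it takes a genuinely different route from the paper's. The paper proves this proposition by repeating the argument of Proposition \ref{stw.2}: by Dynkin's formula one writes $R_\alpha u_{n'}=R_\alpha^B u_{n'}+H^\alpha_{E\setminus B}(R_\alpha u_{n'})$ and shows the second term converges; the point of the remark that Lemma \ref{lm1.1} is not needed is that, $m$ now being the counting measure, $\{R_\alpha u_{n'}\}$ converges \emph{everywhere}, so $R_\alpha u_{n'}(X_{\tau_B})$ converges $P_x$-a.s. with no upgrade from $m$-a.e.\ to q.e.\ convergence, and dominated convergence (with the majorant $R_\alpha w$, $w=\sup_n u_n$) finishes the job. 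You instead go through the characterization of Proposition \ref{stw2.2}: the hypothesis produces a reference measure $m$ for $\BX$, the domination $R^B_\alpha(x,\cdot)\le R_\alpha(x,\cdot)$ transfers absolute continuity to $\BX^B$, and the weak${}^*$ argument from the ``if'' half of Proposition \ref{stw2.2}, run with the densities $r^B_\alpha(x,\cdot)\in L^1(E;m)$ (which exist for each fixed $x$ by Radon--Nikodym; no joint measurability is needed), gives pointwise convergence of $\{R^B_\alpha u_{n'}\}$ for all $\alpha>0$ along a single subsequence. Both arguments are sound. The paper's route is the specialization of a template valid for arbitrary triples $(\BX,\PP,m)$; yours is shorter and avoids Dynkin's formula entirely, but it is specific to the pair $(\BX,\BB_1)$ with pointwise convergence, since that is the only setting where the hypothesis~(L) characterization applies, and it yields the marginally stronger conclusion that one subsequence works for every $\alpha\in(0,\infty)$. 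Two mild caveats: your ``quicker alternative'' needs the (true, but citable) fact that the part of a right process on a Borel set is again a right process --- your self-contained version does not; and the weak${}^*$ sequential compactness of the unit ball of $L^\infty(E;m)$ tacitly uses separability of $L^1(E;m)$, but this is exactly the step already present in the paper's proof of Proposition \ref{stw2.2}, so it is not a new gap.
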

\begin{proof}
Follows by the same method as in the proof of Proposition
\ref{stw.2}, because under the assumption of the present
proposition we need not use  Lemma \ref{lm1.1}.
\end{proof}

\begin{tw}
\label{tw4.1} Let $\BX$ be a transient Markov process satisfying
condition  $\mbox{\rm{(L)}}$ of Meyer. Assume that
$\mbox{\rm{(H1)--(H4)}}$ are satisfied. Then there exists a
solution of $\mbox{\rm{(\ref{eq4.1})}}$.
\end{tw}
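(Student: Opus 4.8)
The plan is to mimic, step by step, the proof of Theorem \ref{tw.1}, replacing the Dirichlet-form tools used there with their counterparts for transient right Markov processes established in this section. The key point is that, by Proposition \ref{stw2.2}, hypothesis (L) is equivalent to $(\BX,\BB_1)$ having the compactness property with the counting measure; this is inherited by $(\BX,\BB_1,m)$ for the fixed excessive measure $m$, and, by Proposition \ref{uw.1}, for every $B\in\BB(E)$ the pair $(\BX^B,\BB_1(E))$ — hence the triple $(\BX^B,\BB_1(B),m)$ — has the compactness property as well. I would use these facts in place of Corollary \ref{wn2.7} and Proposition \ref{stw.2}, so that the Dirichlet-form structure can be dispensed with; fine continuity and q.e.\ finiteness of potentials would come from Lemma \ref{lm4.1}, and the nest and Revuz-duality bookkeeping from the facts on PcNAFs and smooth measures recalled above, with ``q.e.''\ now meaning ``off an $m$-polar set'' (cf.\ Remark \ref{uw.0}).

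First I would dispose of the bounded case: $\|R|\mu|\|_\infty<\infty$ and $|F(x,y)|\le g(x)$ for some strictly positive bounded Borel $g$ with $\|Rg\|_\infty<\infty$. Fixing a strictly positive $\rho\in L^1(E;m)$, I would set $\Phi(u)=RF(\cdot,u)+R\mu$ on $L^2(E;\rho\cdot m)$; since $|\Phi(u)|\le Rg+R|\mu|$ is bounded q.e., $\Phi$ maps $L^2(E;\rho\cdot m)$ into a fixed ball and is continuous by (H3). For $\{u_n\}\subset L^2(E;\rho\cdot m)$ the function $v_n=\Phi(u_n)$ is the Feynman--Kac solution of $-Av_n=F(\cdot,u_n)+\mu$ (cf.\ Remark \ref{uw.fkf}), whence $|v_n(x)-p_tv_n(x)|\le E_x\int_0^{t\wedge\zeta}g(X_r)\,dr+E_x\int_0^{t\wedge\zeta}dA^{|\mu|}_r\to0$ as $t\to0^+$ for q.e.\ $x$; thus $\{v_n\}$ satisfies (M$_1$) and is uniformly bounded, and Theorem \ref{stw2.1} yields a subsequence convergent $m$-a.e., hence in $L^2(E;\rho\cdot m)$. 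So $\Phi$ is compact, Schauder's theorem gives a fixed point $u$, and, setting $v(x)$ equal to $E_x\int_0^\zeta F(X_r,u(X_r))\,dr+E_x\int_0^\zeta dA^\mu_r$ wherever finite — a q.e.\ statement by Lemma \ref{lm4.1} — the function $v$ is finely continuous, lies in $\mathbf{D}$, and is a solution of (\ref{eq4.1}).

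For the general case I would truncate as in the proof of Theorem \ref{tw.1}: using transience, pick strictly positive bounded $g$ with $\|Rg\|_\infty<\infty$ and a nest $\{B_n\}$ with $\mu_n:=\mathbf{1}_{B_n}\mu$ satisfying $\|R|\mu_n|\|_\infty<\infty$, and put $F_n(x,y)=\frac{ng(x)}{1+ng(x)}\cdot\frac{nF(x,y)}{|F(x,y)|\vee n}$, which obeys (H2)--(H4) with $R|F_n|\le n^2Rg$. The bounded case just treated provides solutions $u_n$ of $-Au_n=F_n(\cdot,u_n)+\mu_n$; the Tanaka--Meyer formula applied to $|u_n(X)|$ together with (H4), followed by $T\to\infty$ using $u_n\in\mathbf{D}$, gives the a priori bound $|u_n(x)|\le v(x):=E_x\int_0^\zeta G(X_r)\,dr+E_x\int_0^\zeta dA^{|\mu|}_r$ q.e.; by (H1), (H4) and Lemma \ref{lm4.1}, $v$ is finely continuous and finite q.e., $v\in\mathbf{D}$, $v(X_{t\wedge\zeta})\to0$, and $v$ solves $-Av=G+|\mu|$. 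Then I would localize on $U_k=\{v<k\}$, which is finely open with $\bigcup_kU_k=E$ q.e.: there $|u_n\mathbf{1}_{U_k}|\le k$, and — exactly as in the proof of Theorem \ref{tw.1}, introducing via (H2) the times $\sigma_l=\inf\{t:\int_0^t\sup_{|y|\le k}|F|(X_r,y)\,dr>l\}\nearrow\infty$ and stopping at $\tau_{U_k}\wedge\sigma_l$ — $\{u_n\mathbf{1}_{U_k}\}$ satisfies (M$_1$) for the part process $\BX^{U_k}$, which is normal. Since $(\BX^{U_k},\BB_1(U_k),m)$ has the compactness property, Theorem \ref{stw2.1} extracts a q.e.\ convergent subsequence on $U_k$, and a diagonal argument over $k$ yields a subsequence with $u_n\to u$ q.e.\ on $E$. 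Writing the equation for $u_n$ stopped at $\tau_{U_k}\wedge\sigma_k$ in conditional form and invoking \cite[Lemma 6.1]{BDHPS}, I would obtain that $(u_n(X),\int_0^\cdot F_n(X_r,u_n(X_r))\,dr,A^{\mu_n})$ converges, uniformly on compacts in probability $P_x$ for q.e.\ $x$, to $(u(X),\int_0^\cdot F(X_r,u(X_r))\,dr,A^{\mu})$: the second coordinate by (H2), (H3) and the shape of $F_n$, the third because $A^{|\mu_n-\mu_m|}_t=\int_0^t\mathbf{1}_{B_n\Delta B_m}(X_r)\,dA^\mu_r$ while $P_x(\sigma_{B_n\Delta B_m}<\infty)\to0$ q.e.\ since $\{B_n\}$ is a nest. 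Passing to the limit would then give a local MAF $M$ of $\BX$ such that the defining identity of a solution of (\ref{eq4.1}) holds q.e., while $|u|\le v\in\mathbf{D}$ with $v(X_{t\wedge\zeta})\to0$ gives $u\in\mathbf{D}$, the boundary condition, and — with (H2) — $F(\cdot,u)\in qL^1(E;m)$.

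The step I expect to be the main obstacle is not any single estimate but setting up the underlying stochastic calculus for a general transient right Markov process: the Tanaka--Meyer/It\^o formula for additive functionals, the Fukushima-type decomposition behind the very notion of solution, and the existence of the limiting local MAF $M$ are no longer imported from Dirichlet-form theory, and it is precisely here that the extra regularity assumption on the semigroup $\{T_t\}$ (mentioned right after the statement) is used. A lesser nuisance is keeping the various $m$-polar exceptional sets coherent through the localization — in particular that $\{U_k\}$ exhausts $E$ q.e.\ and that each $\BX^{U_k}$ is a normal process to which Theorem \ref{stw2.1} applies — which is handled by Lemma \ref{lm4.1} and Remark \ref{uw.0}.
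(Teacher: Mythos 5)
Your proposal is correct and follows essentially the same route as the paper: Schauder's theorem applied to $\Phi(u)=RF(\cdot,u)+R\mu$ on $L^2(E;\rho\cdot m)$, with compactness of $\Phi$ derived from the compactness property guaranteed by hypothesis (L), followed by the truncation/localization scheme of Step 2 of the proof of Theorem \ref{tw.1}, with Lemma \ref{lm4.1}, Remark \ref{uw.0} and Proposition \ref{uw.1} replacing the Dirichlet-form tools. The only minor difference is that the paper runs its first step under the weaker assumption that $Rg$ is finite $m$-a.e.\ (rather than $\|Rg\|_\infty<\infty$) and therefore invokes Proposition \ref{stw4.44} (compactness on order intervals $[0,v]$, $v\in\mathbf{D}$) where you rely on uniform boundedness and the $\BB_1$-compactness alone.
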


\begin{dow}
We assume that there exists a Borel function $g\in\BB^+(E)$ such
that $|F(x,y)|\le g(x)$ for $x\in E$, $y\in\BR^N$ and $Rg$ is
finite $m$-a.e. Let $\rho$ be a strictly positive Borel function
on $E$ such that
\[\int_E[E_xA_\zeta^{|\mu|}+Rg(x)]^2\rho(x)\,m(dx)<\infty.
\]
Let
\[
\Phi:L_2(E;\rho\cdot m)\rightarrow L_2(E;\rho\cdot m),\qquad
\Phi(u)=RF(\cdot,u)+R\mu.
\]
The mapping $\Phi$ is well defined since
\[|R(F(\cdot,u))+R\mu|\le Rg\in R|\mu|\in L^2(E;\rho\cdot m).
\]
In fact $\Phi:L^2(E;\rho\cdot m)\rightarrow B_{L^2(E;\rho\cdot
m)}(0,r)$, where $r=\|Rg\|_{L^2(E;\rho\cdot
m)}+\|R|\mu|\|_{L^2(E;\rho\cdot m)}$. $\Phi$ is continuous by
(H3). Let $\{u_n\}\subset L^2(E;\rho\cdot m)$. Define $v_n$ by
putting $v_n(x)=RF(\cdot,u_n)(x)+R\mu(x)$ for $x$ such that
$Rg(x)+R|\mu|(x)<\infty$ and $v_n(x)=0$ otherwise. By the
assumptions and Lemma \ref{lm4.1}, $v_n$ is finely continuous and
finite q.e. By Remark \ref{uw.0} we may assume that it is finite
except for an $m$-inessential set. Then by the strong Markov
property formula (\ref{eq.a1}) holds. Therefore repeating the
arguments following (\ref{eq.a1}) and applying Proposition
\ref{stw4.44} we conclude that $\Phi$ is compact. The rest of the
proof now runs as in Step 2 of the proof of Theorem \ref{tw.1} (we
use Proposition \ref{uw.1} instead of Proposition \ref{stw.2}).
\end{dow}

\nsubsection{Applications}
\label{sec5}

In this section we give several examples of processes having the
compactness property.
\begin{prz}
Let $\{\mu_t,t>0\}$ be a convolution semigroup on $\BR^d$ and let
$\BX$ be a Hunt process with the transition function
\[
p_tf(x)=\int_{\BR^d}f(x+y)\mu_t(dy).
\]
It is known (see \cite{Bertoin}) that if for some $\varepsilon>0$
\begin{equation}
\label{eq2.12} \lim_{|x|\rightarrow
\infty}|x|^{-\varepsilon}|\mbox{Re}\,\psi|(x)\rightarrow \infty,
\end{equation}
where $\hat{\mu}_t(x)=e^{-t\psi(x)},\,x\in\BR^d$ ($\hat{\mu}_t$
stands for  the  Fourier transform of $\mu_t$) then the Lebesgue
measure $m$ on $\BR^d$ is a reference measure for $\BX$. Therefore
if $\BX$ is a L\'evy process with the characteristic exponent
$\psi$ satisfying (\ref{eq2.12}) then $(\BX,\BB_1)$ has the
compactness property. Consequently, our existence and uniqueness
results of Section \ref{sec3} (Theorem \ref{tw.1} and Proposition
\ref{prop3.2}) apply to systems with operator $A$ of the form
$\psi(\nabla)$ with $\psi$ satisfying (\ref{eq2.12}). A model
example is $\psi$ of the form $\psi(x)=|x|^{\alpha},x\in\BR^d$,
for some $\alpha\in(0,2]$, which corresponds to the fractional
Laplacian $\psi(\nabla)=(\nabla^2)^{\alpha/2}=\Delta^{\alpha/2}$.
\end{prz}

\begin{prz}
Let $H$ be a real Hilbert space, $Q\in\mathcal{L}(H)$ be a
selfadjoint nonnegative operator and $A$ be  a generator of a
$C_0$-semigroup $e^{tA}$ on $H$. Let
\[
Q_t=\int_0^te^{sA}Qe^{sA^*}\,ds
\]
be of trace class, $e^{tA}(H)\subset Q_t^{1/2}(H)$ and
$\mbox{Ker\,}Q_t=\{0\}$, $t>0$. It is well known that the
Ornstein-Uhlenbeck semigroup
\[
(T_t\phi)(x)=\int_H\phi(x)\mathcal{N}(e^{tA}x,Q_t)\,dy, \quad
\phi\in\BB_b(E),
\]
where $\mathcal{N}(e^{tA}x,Q_t)$ is the Gaussian measure on $H$
with mean $e^{tA}x$ and covariance operator $Q_t$ is representable
by the Ornstein-Uhlenbeck process being a solution of the  SDE
\[
\left\{
\begin{array}{l}dX(t,x)=AX(t,x)\,dt+Q^{1/2}dW(t)\smallskip \\
X(0,x)=x\in H,
\end{array}
\right.
\]
i.e.
\[
(T_t\phi)(x)=E_x\phi(X_t),\quad \phi\in\BB_b(E)
\]
(see \cite{DaPratoZabczyk} for details). By the Cameron-Martin
formula (see, e.g., \cite{DaPrato}), $\mathbb{X}$ satisfies
Meyer's hypothesis (L), which implies that $(\mathbb{X},\BB_1)$
has the compactness property. Therefore results of Section
\ref{sec3} apply to systems with Ornstein-Uhlenbeck operator being
a generator of the semigroup $\{T_t\}$.
\end{prz}

\begin{prz}
Let $(\mathcal{E}, D[\mathcal{E}])$ be a regular symmetric
Dirichlet form on $L^2(E;m)$. By \cite{Fukushima}, if the
following Sobolev type inequality holds
\[
\|u\|_{p_0}^2\le c\EE_{\lambda_0}(u,u),\quad u\in D[\EE]
\]
for some $c>0$, $p_0>2$, $\lambda_0\ge 0$, then $m$ is a reference
measure for $\BX$ associated with $(\EE,D[\EE])$. Consequently,
$(\mathbb{X},\BB_1)$ has the compactness property.
\end{prz}

\begin{prz}
Let $(\EE, D[\EE])$ be a regular semi-Dirichlet form and let $\BX$
be the  associated Hunt process. Suppose that $(\BX,\PP,m)$ has
the compactness  property. Let $\mu$ be a positive smooth measure
and let $(\EE_\mu,D[\EE_\mu])$ be  the form defined as
\[
\EE_\mu(u,v)=\EE(u,v)+\int_Euv\,d\mu,\quad
D[\EE_\mu]=\{u\in D[\EE];\,\int_E|u|^2\,d\mu<\infty\}.
\]
It is known that $(\EE_\mu,D[\EE_\mu])$ is a quasi-regular
Dirichlet form and that the associated standard special
process $\BX^\mu$ is a subprocess of $\BX$ (see \cite[Section
6.4]{Fukushima}). Therefore $(\BX^{\mu},\PP,m)$ has the
compactness property.
\end{prz}

\begin{prz}
Let $(\{X_t, t\ge 0\},\{P_{s,x},(s,x)\in[0,\infty)\times E\})$ be
a  time inhomogenous Markov process. Assume that for every $s\ge
0$ the pair $(X^s=(\{X_{s+t},t\ge 0\},\{P_{s,x},x\in
E\}),\BB_1(E))$ has the compactness property. Then by Proposition
\ref{stw2.2}, $X^s$ has a reference measure $m(s)$. Assume that
$m(s)=m$, $s\ge 0$. Let $\mathbf{Z}=(\{Z_t,t\ge
0\},\{P_z,z\in[0,\infty)\times E\})$, where
$Z_t=(\tau(t),X_{\tau(t)})$, $t\ge 0$ and $\tau$ is the uniform
motion to the right i.e. $\tau(t)=\tau(0)+t$,
$P_{s,x}(\tau(0)=s)=1$. Then $\mathbf{Z}$ is a Markov process with
reference measure $\bar{m}=dt\otimes m$. Indeed, we have
\begin{equation}
\label{eq5.2} R^{\mathbf{Z}}_\alpha((s,x),T\times B)
=\int_0^\infty e^{-\alpha t}E_{s,x}\mathbf{1}_B(X_{s+t})
\cdot\mathbf{1}_T(s+t)\,dt.
\end{equation}
Suppose that $T\in\mathcal{B}([0,\infty)), B\subset\BB(E)$ and
$\bar{m}(T\times B)=0$. Then  $dt(T)=0$ or  $m(B)=0$. If $dt(T)=0$
then it is clear that $R_\alpha^{\mathbf{Z}}((s,x),T\times B)=0$.
If $m(B)=0$ then the right-hand side of (\ref{eq5.2}) is less then
or equal to
\[
\int_0^\infty e^{-\alpha t}E_{s,x}\mathbf{1}_B(X_{s+t})\,dt
=R_{\alpha}^{\BX^s}(x, B)=0,
\]
the last equality being a consequence of the fact that
$R^{\BX^s}_\alpha(x,dy)\ll m(dy)$. Thus $\bar m$ is the reference
measure. As a result, the pair
$(\mathbf{Z},\mathcal{B}_1([0,\infty)\times E))$ has the
compactness property. For instance, let $\{A(t),t\ge 0\}$ be a
family of operators associated with regular semi-Dirichlet forms
$\EE^{(t)}$ on $L^2(E;m)$ and let $\mathbf{Z}$ be a process
associated with the operator $\LL=\frac{\partial}{\partial
t}+A(t)$. If for every $t\ge 0$ the  Hunt processes associated
with $\EE^{(t)}$ together with  $\mathcal{B}_1(E)$ form pairs
having the compactness property with the same reference measure
then the pair $(\mathbf{Z}, \BB_1([0,\infty)\times E))$ has the
compactness property.
\end{prz}

\begin{prz}
Let $\BX$ be a solution of the following $d$-dimensional SDE
\[
X_t^x=x+\sum^d_{j=1}\int_0^ta_j(r,X_r^x)\,dW_r^j+\int_0^tb(r,X_r^x)\,dr,
\]
where $x\in\BR^d$ and
$a_j,b:[0,\infty]\times\BR^d\rightarrow\BR^d$, $j=1,\dots,d$, are
measurable functions satisfying the assumptions
\begin{enumerate}
\item[(a)] $\sum_{j=1}^d|a_j(t,x)-a_j(t,y)|+|b(t,x)-b(t,y)|
\le L|x-y|$ for every $x,y\in\BR^d$, $t\ge 0$,
\item[(b)] $t\rightarrow a_j(t,0)$,
$t\rightarrow b(t,0)$ are bounded on $[0,T]$ for every $T>0$.
\end{enumerate}
Then by \cite[Theorem 2.3.2]{Nualart}, if
\[
P(S_x=0)=1,
\]
where
\[
S_x=\inf\{t>0;\,\int_0^t\mathbf{1}_{\{\det\sigma(r,X_r)\neq
0\}}\,dr>0\}\wedge T,
\]
then for every $t>0$ the distribution of  $X_t$ is absolutely
continuous with respect to the Lebesgue measure on $\BR^d$. It
follows that if, for instance, $\sigma(t,x)>0$ for every
$(t,x)\in[0,\infty)\times\BR^d$, then $(\BX,\mathcal{B}_1(\BR^d))$
has the compactness property. More generally, let $A$ be an
absorbing set for $\BX$, i.e. if $x\notin A$ then
$P(\exists_tX_t^x\in A)=0$. Then if $\sigma(t,x)>0$ for every
$(t,x)\in[0,\infty)\times \BR^d\setminus A$ then
$(\BX^{\BR^d\setminus A},\mathcal{B}_1(E\setminus A))$ has the
compactness property. To be more specific, let us consider
diffusion process describing dividend-paying asset prices in the
classical multidimentional Black and Scholes model, i.e.
\[
X_t^{x,i}=x_i+\int_0^t(r-d_i)X_r^{x,i}\,dr
+\sum^d_{j=1}\int_0^t\sigma_{ij}X_r^{x,i}\,dW_r^j,\quad
i=1,\dots,d.
\]
Then $(\BX^{\BR^d\setminus A},\mathcal{B}_1(\BR^d\setminus A))$,
where $ A=\{x\in\BR^d: x_i=0\mbox{ for some }i=1,\dots,d\}$, has
the compactness property.
\end{prz}

We close this section with an example of a right Markov process
$\BX$ which is not associated with a Dirichlet form,  so that the
results of Section \ref{sec3} can not be applied to systems with
operator associated with $\BX$. However, $\BX$ satisfies Meyer's
hypothesis (L), so that results of Section \ref{sec4} are
applicable.

\begin{prz}
For $\phi\in C^1(\BR^2)$, $x\in\BR^2$ set
\[
L\phi(x)=\frac12 \mbox{tr}(QD^2\phi(x))+\langle Ax, D\phi (x)\rangle,
\]
where
\[
Q=\begin{bmatrix}
1&1\\1&1
\end{bmatrix},\quad A=\begin{bmatrix}
-1&0\\0&-2
\end{bmatrix}.
\]
Then the semigroup $e^{tA}$  generated by $A$ is of the form
\[e^{tA}=\begin{bmatrix}
e^t&0\\0&e^{-2t}
\end{bmatrix},\quad t\ge 0
\]
and
\[
Q_t\equiv \int_0^t e^{sA}Qe^{sA}\,ds=\begin{bmatrix}
\frac12(1-e^{-2t})&\frac13 (1-e^{-3t})\smallskip \\
\frac13(1-e^{-3t})&\frac14(1-e^{-4t}),
\end{bmatrix},\quad Q_{\infty}=   \begin{bmatrix}
\frac12&\frac13\smallskip\\  \frac13&\frac14
\end{bmatrix}.\]
It is clear that $\mbox{Ker\,}Q_t=\{0\}$ and $Q_t>0$ for every
$t>0$. Let $\{P_t,\, t\ge 0\}$ be the semigroup generated by the
operator $L$ on $L^2(\BR^2;\mu)$, where $\mu=\mathcal{N}(0,Q_\infty)$.
 It is well known that
\[
P_tf(x)=E_xf(X_t),
\]
where $X$ is a unique solution of the  SDE
\[
dX_t=AX_t\,dt+Q^{1/2}\,dW_t,\quad X_0=x.
\]
From \cite{Kuptsov} it follows that $\mathbb{X}=\{(X,P_x),
x\in\BR^2\})$ satisfies Meyer's hypthesis (L). Therefore
$(\mathbb{X},\mathcal{B}_1)$ has the compactness property. On the
other hand, by \cite{Goldys}, $\{P_t,t\ge 0\}$ is variational
(i.e. is associated with a Dirichlet form on $L^2(\BR^2,\mu)$) if
and only if $\{P_t,t\ge 0\}$ is analytic. By \cite{Goldys} (see
also \cite{Fuhrman}), $\{P_t,t\ge 0\}$ is analytic if and only if
$Q$ is invertible. Accordingly, $\{P_t, t\ge 0\}$ is not
variational.
\end{prz}
\vspace{3mm} \noindent{\bf\large Acknowledgements}
\medskip\\
Research supported by Polish NCN grant no. 2012/07/D/ST1/02107.


\begin{thebibliography}{99}

\bibitem{AAB}
Alibaud, N.,  Andreianov, B.,  Bendahmane, M.: Renormalized
solutions of the fractional Laplace equations, C.R. Acad.
Sci. Paris, Ser. I {\bf 348}, 759--762 (2010)


\bibitem{AB}
Aliprantis, C.D.,  Burkinshaw, O.: Positive compact operators on
Banach lattices, Math. Z. {\bf 174}, 289--298 (1980)


\bibitem{BBr}
B\'enilan, P, Brezis, H.: Nonlinear problems related to the
Thomas-Fermi equation. J. Evol. Equ. {\bf 3}, 673--770  (2004)

\bibitem{Betal.}
B\'enilan, P. Boccardo, L., Gallou\"et, T., Gariepy, R.,  Pierre,
M., Vazquez, J.-L.:  $L^{1}$-theory of existence and
uniqueness of solutions of nonlinear elliptic equations. Ann.
Scuola Norm. Sup. Pisa Cl. Sci. {\bf 22}, 241--273 (1995)

\bibitem{Bertoin}
Bertoin, L.: L\'evy Processes, Cambridge University Press,
Cambridge (1996)

\bibitem{BB}
Beznea, L., Boboc, N.: Potential Theory and Right Processes.
Kluwer Academic Publishers, Dordrecht (2004)

\bibitem{BDHPS}
Briand, Ph., Delyon, B., Hu, Y., Pardoux, E., Stoica, L., $L^{p}$
solutions of Backward Stochastic Differential Equations.
Stochastic Process. Appl.  {\bf 108},  109--129 (2003)

\bibitem{BG}
Blumenthal, M.R., Getoor, R.K.: Markov Processes and
Potential Theory. Dover Publications, New York (2007)

\bibitem{BGO}
Boccardo, L., Gallou\"et, T. and Orsina, L.: Existence and
uniqueness of entropy solutions for nonlinear elliptic equations
with measure data.  Ann. Inst. H. Poincar\'e Anal. Non Lin\'eare
{\bf 13} , 539--551 (1996)

\bibitem{BDGO}
Boccardo,  L., Dall'Aglio, A.,  Gallou\"et, T.,  Orsina, L.: Nonlinear
Parabolic Equations with Measure Data.  J. Funct. Anal. {\bf
147},  237--258  (1997)

\bibitem{DaPrato}
Da Prato, G.: An Introduction to Infinite-Dimensional Analysis,
Springer-Verlag, Berlin (2006)

\bibitem{DaPratoZabczyk}
Da Prato, G., Zabczyk, J.: Stochastic Equation in Infinite
Dimension.  Cambridge
University Press, Cambridge (1992)

\bibitem{DMO}
Dal Maso, G.,   Murat, F.,   Orsina, L. and  Prignet, A.: Renormalized
solutions of elliptic equations with general measure data,
Ann. Scuola Norm. Sup. Pisa Cl. Sci. {\bf 28},  741--808 (1999)

\bibitem{DellacherieMeyer}
Dellacherie, C., Meyer, P.A.: Probabilities and potential C,
North-Holland, Amsterdam (1988)


\bibitem{DPP}
Droniou, J.,  Porretta, A., Prignet, A.:Parabolic Capacity and
Soft Measures for Nonlinear Equations, Potential Anal. {\bf
19},  99--161 (2003)


\bibitem{FG}
Fitzsimmons, P.J., Getoor, R.K.: Homogeneous Random Measure  and
Strongly Supermedian Kernels of a Markov Process, Electron.
J.  Probab, {\bf 8}, 1--54 (2003)

\bibitem{Fuglede1}
Fuglede, B., Integral Representation of Fine Green
Potentials. Math. Ann. {\bf 262},  191--214  (1983)

\bibitem{Fuglede2}
Fuglede, B., The Dirichlet Laplacian on Finely Open Sets.
Potential Anal. {\bf 10}, 91--101 (1999)

\bibitem{Fuhrman}
Fuhrman, M.: Analyticity of transition semigroups and  closability
of bilinear forms in Hilbert space. Studia Math. {\bf 115},
 53--71 (1995)


\bibitem{Fukushima}
Fukushima, M., Oshima, Y., Takeda, M.:  Dirichlet Forms and
Symmetric Markov Processes. Walter de Gruyter, Berlin (1994)

\bibitem{GS}
Getoor, R.K., Sharpe, M.J.: Naturality, standardness, and weak duality
for Markov processes. Z. Wahrsch. verw. Gebiete {\bf 67}, 1--62 (1984)

\bibitem{Goldys}
Goldys, B.: On analiticity of Ornstein-Uhlenbeck semigroups.
Rend Mat. Acc. Lincei, {\bf 9}, 131--140 (1999)

\bibitem{KPU}
Karlsen, K.H.,  Petitta, F.,  Ulusoy, S.: A duality approach to the
fractional Laplacian with measure data, Publ. Mat. {\bf 55},
 151--161 (2011)

\bibitem{Kl:AMPA}
Klimsiak, T.: Semilinear elliptic systems with measure data.
Ann. Mat. Pura Appl. (4) (2013)  DOI 10.1007/s10231-013-0364-4.


\bibitem{KR:JFA}
Klimsiak, T., Rozkosz, A.: Dirichlet forms and semilinear elliptic
equations with measure data. J. Funct. Anal. {\bf 265},
 890--925 (2013)

\bibitem{KR:MN}
Klimsiak, T., Rozkosz, A.: Semilinear elliptic equations with
measure data and quasi-regular Dirichlet forms (2013).
arXiv:1307.0717.

\bibitem{Kuptsov}
Kuptsov, L.P.: Fundamental solutions for a class of second-order
elliptic-parabolic equations. Differentcial'nye Uravnenija
{\bf 8}, 1649--1660  (1972); English Translation, Differential
Equations {\bf 8}, 1269--1278 (1972)

\bibitem{MR}
Ma, Z., R\"ockner, M.: Introduction to the Theory of
(Non-Symmetric) Dirichlet Forms. Springer-Verlag, Berlin (1992)

\bibitem{Nualart}
Nualart, D.: The Malliavin calculus and related topics. Springer, New York (1995)

\bibitem{Serrin}
Serrin, J.: Pathological solutions of elliptic differential
equations. Ann. Scuola Norm. Sup. Pisa (3) {\bf 18},
385--387 (1964)

\bibitem{Oshima}
Oshima, Y.: Semi-Dirichlet Forms and Markov Processes.
Walter de Gruyter, Berlin (2013)

\bibitem{Revuz}
Revuz, D: Mesures associées aux fonctionelles additives de Markov.
I. Trans. Amer. Math. Soc, {\bf 148}, 501--531 (1970)

\bibitem{Revuz1}
Revuz, D.: Markov Chains. Second Edition. North-Holland, Amsterdam
(1984)

\bibitem{Sharpe}
Sharpe, M.: General Theory of Markov Processes. Academic Press, New York (1988)

\bibitem{Stampacchia}
Stampacchia, G., \'Equations elliptiques du second ordre
\`a coefficients discontinus. S\'eminaire de Math\'emtiques
Sup\'erieures {\bf 16}, 1--77 (1966).

\bibitem{WalshWinkler}
Walsh, J.,  Winkler, W.:  Absolute Continuity and the Fine Topology,
Seminar on Stochastic Processes,
Progress in Probability and Statistics {\bf 1}, 151--157 (1981)


\bibitem{WeisWerner}
Weis, L., Werner, D.: Reference measure and the fine topology.
Preprint (1999)

\end{thebibliography}
\end{document}